\renewcommand{\geq}{\geqslant}
\renewcommand{\leq}{\leqslant}
\newtheorem{theorem}{Theorem}
\newtheorem{proposition}{Proposition}[section]
\newtheorem{corollary}[proposition]{Corollary}
\newtheorem{lemma}[proposition]{Lemma}
\newtheorem*{main-theorem}{Main Theorem}
\newtheorem*{theorem*}{Theorem}
\theoremstyle{definition}
\newtheorem{remark}[proposition]{Remark}
\newtheorem*{remark*}{Remark}
\numberwithin{equation}{section}
\def\phi{\varphi}
\def\ZZ{{\mathbb Z}}
\def\NN{{\mathbb N}}
\def\reals{{\mathbb R}}
\def\cx{{\mathbb C}}
\def\Ci{{\mathcal C}^\infty}
\def\Re{\,\mathrm{Re}\,}
\def\Im{\,\mathrm{Im}\,}
\def\sgn{\mathrm{sgn}\,}
\def\WF{\mathrm{WF}}
\def\supp{\mathrm{supp}\,}
\def\O{{\mathcal O}}
\def\SS{{\mathbb S}}
\def\s{{\mathcal S}}
\def\Op{\mathrm{Op}\,}
\def\csh{{\left( h/\tilde{h} \right)}}
\def\phi{\varphi}
\def\dist{\text{dist}\,}
\def\be{\begin{eqnarray*}}
\def\ee{\end{eqnarray*}}
\def\ben{\begin{eqnarray}}
\def\een{\end{eqnarray}}
\def\lll{\left\langle}
\def\rrr{\right\rangle}
\def\L2R{L_{\text{Rest}}^2}
\def\11{\mathds{1}}
\def\HH{\mathcal{H}}
\def\RR{\mathbb{R}}
\def\tpsi{\tilde{\psi}}
\def\tchi{\tilde{\chi}}
\def\L2c{L^2_{\text{comp}}}
\def\th{\tilde{h}}
\def\tDelta{\widetilde{\Delta}}
\def\tV{\widetilde{V}}
\def\tP{\widetilde{P}}
\def\tp{\tilde{p}}
\def\tB{\tilde{B}}
\def\tGamma{\widetilde{\Gamma}}
\def\tR{\tilde{R}}
\def\Vol{\text{Vol}}
\def\tQ{\widetilde{Q}}
\def\tE{\widetilde{E}}
\def\p{\partial}
\def\Vmin{V_{\text{min}}}
\def\tLambda{\widetilde{\Lambda}}
\def\GG{\mathcal{G}}
\def\tq{\tilde{q}}
\newcommand{\abs}[1]{{\left\lvert{#1}\right\rvert}}
\newcommand{\norm}[1]{{\left\lVert{#1}\right\rVert}}
\newcommand{\ang}[1]{{\left\langle{#1}\right\rangle}}
\newcommand{\pa}{{\partial}}
\newcommand{\ep}{{\epsilon}}
\newcommand{\hamvf}{{\textsf{H}}}
\newcommand{\B}{\mathcal{B}}
\begin{document}

\title[Resolvent estimates]{High-frequency resolvent estimates on
  asymptotically Euclidean warped products}

\author[H. Christianson]{Hans Christianson}

\email{hans@math.unc.edu}
\address{Department of Mathematics, UNC-Chapel Hill \\ CB\#3250
  Phillips Hall \\ Chapel Hill, NC 27599}

\subjclass[2010]{35B34,35S05,58J50,47A10}
\keywords{}

\begin{abstract}
We consider the resolvent on asymptotically Euclidean warped product manifolds
in an appropriate 0-Gevrey class, with trapped sets consisting of only
finitely many components.  We prove that the high-frequency resolvent
is either bounded by $C_\epsilon |\lambda|^\epsilon$ for any $\epsilon>0$, or blows
up faster than any polynomial (at least along a subsequence).  A
stronger result holds if the manifold is analytic.  The
method of proof is to exploit the warped product structure to separate
variables, obtaining a one-dimensional semiclassical Schr\"odinger operator.  We
then classify the microlocal resolvent behaviour associated to every possible type of critical
value of the potential, and translate this into the associated
resolvent estimates.  Weakly stable trapping admits highly
concentrated quasimodes and fast growth of the resolvent.  Conversely,
using a
delicate inhomogeneous blowup procedure loosely based on the classical
positive commutator argument, we show that any weakly
unstable trapping forces at least some spreading of quasimodes.

As a first application, we conclude that either there is a resonance free region of size $|
\Im \lambda | \leq C_\epsilon | \Re \lambda |^{-1-\epsilon}$ for any
$\epsilon>0$, or there is a sequence of resonances converging to the
real axis faster than any polynomial.  Again, a stronger result holds
if the manifold is analytic.  As a second application, we
prove a spreading result for weak quasimodes in partially rectangular billiards.

\end{abstract}

\maketitle

\section{Introduction}
\label{S:intro}

In this paper, we consider manifolds which have a warped product
structure and are 
asymptotically Euclidean with a certain 0-Gevrey regularity.  Our
main result is that the cutoff resolvent is either (almost) bounded
or blows up faster than any polynomial.  Of course, the proof gives
much more information than this simple statement, but for aesthetic
reasons we prefer to phrase it in this fashion.  Let us state the main
result.

\begin{theorem}
\label{T:dichotomy}
Let $X$ be a 0-Gevrey smooth $\GG^0_\tau$, $\tau < \infty$, warped product manifold without boundary
which is a short range perturbation of Euclidean space (with one or
two infinite ends).  Assume also that the trapped set on $X$ has
finitely many connected components.  Let $-\Delta$ be the
Laplace-Beltrami operator on $X$.  

Then either

{\bf 1:}  For every $\phi \in \Ci_c(X)$ and every $\epsilon>0$, there exists $C_\epsilon
>0$ such that
\begin{equation}
\label{E:lambda-eps}
\| \phi (-\Delta - (\lambda - i0)^2)^{-1} \phi \| \leq C_\epsilon | \lambda |^{\epsilon},
\,\,\, | \lambda | \gg 1.
\end{equation}
or

{\bf 2:}  For every $N>0$, there exists $\phi \in \Ci_c(X)$, $C_N >0$, and a sequence $\lambda_j \in
\reals$, $\lambda_j \to \infty$, such that 
\begin{equation}
\label{E:lambda-bup-1}
\| \phi (-\Delta - (\lambda_j - i0)^2)^{-1} \phi \| \geq C_N |\lambda_j|^N.
\end{equation}

\end{theorem}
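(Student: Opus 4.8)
The plan is to use the warped product structure to reduce the full resolvent estimate to a uniform family of one-dimensional semiclassical problems, and then to split into the two alternatives according to the nature of the trapping. Writing the metric in the form $dr^2 + A(r)^2\, d\omega^2$ and decomposing $\l$ along the eigenspaces of the Laplacian on the cross-section, the equation $(-\Delta-(\lambda-i0)^2)u = f$ becomes on each eigenspace, after a conjugation removing the first-order radial term and the rescaling $h = 1/|\lambda|$, a one-dimensional equation $(P_h(\mu) - 1 - i0)v = g$ with $P_h(\mu) = -h^2\partial_x^2 + V_\mu(x)$, where $x$ is a modified radial coordinate, $\mu\in[0,\mu_0]$ is the semiclassically rescaled cross-sectional eigenvalue, and $V_\mu$ is the effective potential built from $A$ and the conjugation. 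Since a general cutoff $\phi\in\Ci_c(X)$ is dominated by a radial one, which respects the decomposition, $\|\phi(-\Delta-(\lambda-i0)^2)^{-1}\phi\|$ is, up to constants, $h^2$ times $\sup_{\mu}\|\chi(P_h(\mu)-1-i0)^{-1}\chi\|$. Thus the theorem reduces to a dichotomy for this supremum: either it is $\le C_\epsilon h^{-2-\epsilon}$ for every $\epsilon>0$, or it grows faster than every power of $h^{-1}$ along a sequence $h_j\to0$.

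The next step is to classify the obstructions. Trapping for $P_h(\mu)$ at energy $1$ occurs precisely when $1$ is a critical value of $V_\mu$, i.e. when $\{\rho^2 + V_\mu(x) = 1\}$ contains a fixed point of the Hamilton flow; the hypothesis that the trapped set of $X$ has finitely many components limits this to finitely many pairs $(\mu, x_0)$, so the analysis is a finite collection of local problems, for which the $\GG^0_\tau$ regularity controls the admissible local behaviour of $V_\mu$ near $x_0$. Using the microlocal resolvent estimates attached (earlier in the paper) to each type of critical value, I would sort these configurations into two families: the \emph{weakly stable} ones, where the trajectory through $x_0$ is confined on one side so that WKB concentration is available, and the \emph{weakly unstable} ones, where every nearby trajectory eventually escapes.

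If some $V_\mu$ has $1$ as a weakly stable critical value, I would build the bad sequence directly: a WKB/Gevrey quasimode construction adapted to the $\GG^0_\tau$ class produces, for a sequence $h_j\to0$, a normalized $v_j$ localized near $x_0$ with $\|(P_{h_j}(\mu)-1)v_j\| = O(e^{-c h_j^{-1/\tau}})$; transporting $v_j$ back through the separation of variables yields $\phi\in\Ci_c(X)$ and $\lambda_j\to\infty$ with $\|\phi(-\Delta-(\lambda_j-i0)^2)^{-1}\phi\|\gtrsim e^{c\lambda_j^{1/\tau}}$, which beats $C_N|\lambda_j|^N$ for every $N$, so alternative \textbf{2} holds. (If $X$ is analytic the quasimode error improves to $e^{-c/h_j}$, which is the source of the stronger statement mentioned in the abstract.)

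If no $V_\mu$ has a weakly stable critical value at energy $1$, then every trapped point is weakly unstable, and I would prove alternative \textbf{1} by the inhomogeneous positive commutator argument. Away from the finitely many trapped points, the non-trapping/limiting-absorption bound gives $O(h^{-1})$, the short-range asymptotically Euclidean structure handling the two infinite ends. Near each weakly unstable $x_0$ one constructs an $h$-dependent escape function $B$, blown up at a scale dictated by the order of degeneracy at $x_0$ and the Gevrey exponent $\tau$, so that $\tfrac{i}{h}[P_h(\mu),B]$ is bounded below modulo an absorbable error; propagating and iterating this estimate yields $\|\chi(P_h(\mu)-1-i0)^{-1}\chi\|\le C_\epsilon h^{-2-\epsilon}$ uniformly in $\mu\in[0,\mu_0]$, hence \eqref{E:lambda-eps} after undoing the rescaling. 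I expect this last step to be the main obstacle: at a degenerate, merely $\GG^0_\tau$, maximum there is no homogeneity to exploit as in the classical hyperbolic-orbit commutator estimate, so the blowup scale, the symbol class of $B$, and the error bookkeeping must all be adapted to $\tau$, and, crucially, the estimate must be made uniform as $\mu$ sweeps the compact parameter interval — including across transitional values of $\mu$ at which the critical value of $V_\mu$ crosses $1$ or at which two critical points coalesce — while keeping every error $o(h^{-2-\epsilon})$.
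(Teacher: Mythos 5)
Your overall architecture matches the paper's: separate variables to a one-dimensional semiclassical operator, produce quasimodes when there is (weakly) stable trapping, and prove $h^{-2-\epsilon}$ microlocal bounds when all trapping is weakly unstable, then glue. But the step you yourself flag as ``the main obstacle'' is precisely the new content of the paper, and your proposal does not supply it. A blown-up escape function alone does not close the argument at an infinitely degenerate (or cylindrical) maximum or inflection point: with no finite order of vanishing there is no scale at which the commutator $\frac{i}{h}[P,B]$ acquires a usable lower bound uniformly near the critical set. The paper's device (Propositions \ref{P:ml-inv-3a}, \ref{P:ml-inv-3b}, \ref{P:ml-inv-4a}, \ref{P:ml-inv-4b}) is to perturb the potential by an $h$-dependent bump $W_h(x)=\Gamma(h)f(x/x_0(h))$ which creates an artificial \emph{finitely} degenerate critical point of order $2m$, run the inhomogeneous two-parameter commutator argument of Proposition \ref{P:ml-inv-1} for the perturbed operator $Q_1$, and then absorb the perturbation by showing that the resulting lower bound (of size roughly $\Gamma(h)x_0^{-3-\epsilon_0+\cdots}$, with $\Gamma(h)\sim h^{2+\eta}$, $\eta=\O(m^{-1})$) dominates $\|(V_{0,h}-V_0)v\|\leq\Gamma(h)\|v\|$; the 0-Gevrey hypothesis enters quantitatively to control the error terms $g_2,g_3$ and the third-derivative remainder in the Weyl expansion (via the comparison $|V_0''|\lesssim |V_0'|/x^\tau$ and the separation $|x_2-x_1|\gg h^{\delta_2}$). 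None of this bookkeeping is replaceable by ``adapt the blowup scale to $\tau$'' without further argument, and without these propositions alternative \textbf{1} is not established. You also do not address how estimates near distinct components of the trapped set at the same energy are combined; the paper uses that two such components must be separated by an unstable maximum at a strictly higher potential level, whose stable/unstable manifolds form a separatrix, and then glues via the propagation-of-singularities lemma of Appendix \ref{A:gluing}.

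A second, smaller issue is the stable case. You propose a WKB quasimode with error $\O(e^{-ch^{-1/\tau}})$, but the class $\GG^0_\tau$ constrains orders of vanishing, not growth of derivatives, so no such exponential accuracy follows from it; moreover the class admits minima that are infinitely degenerate or flat on an interval, where the WKB ansatz at the well bottom is not available. The paper avoids both problems (Subsection \ref{SS:stable}) by convexifying the potential outside the well, invoking Weyl's law to obtain genuine eigenfunctions at energies strictly below the barrier, and cutting off using their $\O(h^\infty)$ decay in the classically forbidden region; an $\O(h^\infty)$ quasimode is all that alternative \textbf{2} requires. Your exponential claim is harmless if weakened to $\O(h^\infty)$, but as stated it is not justified by the hypotheses.
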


\begin{remark}
The warped product structure at infinity can be replaced by a number
of different non-trapping infinite ``ends'', using the recent gluing
theorem of Datchev-Vasy \cite{DaVa-gluing} (see also \cite{Chr-disp-1}
and Appendix \ref{A:gluing}).  

The dichotomy in Theorem
\ref{T:dichotomy} is from the following idea: if there is any weakly
stable trapping on $X$, then there are well-localized quasimodes, and
we are in Case 2 of the Theorem.  If all the trapping is at least
weakly unstable, we need to prove there is weak microlocal
non-concentration near each connected component of the trapped set, as
well as prove that there is no strong tunneling between different
connected components (i.e. that the different connected components at
the same energy don't ``talk'' to each other too much).

As we shall see, the worst behaviour in \eqref{E:lambda-eps} in Theorem \ref{T:dichotomy}
comes from weakly unstable trapping which is infinitely
degenerate.  Since such trapping cannot occur on an analytic manifold,
there is a nice improvement in this case, given in the next Corollary.  

\end{remark}

\begin{corollary}
\label{C:little-o}
In addition to the assumptions of Theorem \ref{T:dichotomy}, assume
that the manifold $X$ is analytic.  

Then either

{\bf 1:}  There exists $\delta>0$ such that for every $\phi \in
\Ci_c(X)$, there is a constant $C>0$ for which  we have
the estimate 
\begin{equation}
\label{E:lambda-delta}
\| \phi (-\Delta - (\lambda - i0)^2)^{-1} \phi \| \leq C | \lambda |^{-\delta},
\,\,\, | \lambda | \to \infty.
\end{equation}
or

{\bf 2:}  For every $N>0$, there exists $\phi \in \Ci_c(X)$, $C_N>0$, and a sequence $\lambda_j \in
\reals$, $\lambda_j \to \infty$, such that 
\begin{equation}
\label{E:lambda-bup-2}
\| \phi (-\Delta - (\lambda_j - i0)^2)^{-1} \phi \| \geq C_N |\lambda_j|^N.
\end{equation}

\end{corollary}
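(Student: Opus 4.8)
The proof is a re-examination of the proof of Theorem~\ref{T:dichotomy} in the analytic category, tracking where the loss of $|\lambda|^\epsilon$ actually comes from. First, I would run the dichotomy exactly as in Theorem~\ref{T:dichotomy}: if $X$ carries any weakly stable trapped component, then there are well-localized quasimodes and we land in Case~2, which is word-for-word Case~2 of the present Corollary. So we may assume from now on that every connected component of the trapped set is at least weakly unstable.

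Next I would invoke the warped-product reduction used in the body of the paper. Separating variables turns $-\Delta-(\lambda-i0)^2$, after the semiclassical rescaling $h=|\lambda|^{-1}$ and conjugation to energy $1$, into a family of one-dimensional semiclassical Schr\"odinger operators with an effective potential built from the warping function; when $X$ is analytic this effective potential is real-analytic. A non-constant real-analytic function on an interval has only isolated critical points, each of finite order. Hence every weakly unstable trapped component corresponds to a critical value of the effective potential that is a maximum (or inflection) of some \emph{finite} degeneracy $k<\infty$, and since the trapped set has only finitely many connected components there is a uniform bound $k\le k_0$ over all components and all relevant angular modes.

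Then I would quote the classification of microlocal resolvent estimates established earlier in the paper: for a weakly unstable critical value of degeneracy $k$, the associated local resolvent bound is of the form $\|\phi(P(h)-1-i0)^{-1}\phi\|=O(h^{-2+\delta(k)})$ with $\delta(k)>0$; the loss degenerates to the trivial $h^{-2}$ (i.e.\ $\delta(k)\to 0$) only as $k\to\infty$, which is precisely the infinitely-degenerate situation that analyticity rules out. Combining the finitely many components, and using the non-tunneling estimate already established in the proof of Theorem~\ref{T:dichotomy} to control interactions between distinct components at the same energy, yields the global bound $\|\phi(P(h)-1-i0)^{-1}\phi\|=O(h^{-2+\delta_0})$ with $\delta_0=\min_{k\le k_0}\delta(k)>0$ depending only on $X$. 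Undoing the rescaling, $(-\Delta-(\lambda-i0)^2)^{-1}=h^2(P(h)-1-i0)^{-1}$, so $\|\phi(-\Delta-(\lambda-i0)^2)^{-1}\phi\|=O(h^{\delta_0})=O(|\lambda|^{-\delta_0})$, which is \eqref{E:lambda-delta} with $\delta=\delta_0$; the remaining alternative is \eqref{E:lambda-bup-2}.

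The main obstacle is the middle step: one must check that the various normal forms and microlocal models attached to a finite-order weakly unstable critical value genuinely produce a \emph{power} gain over $h^{-2}$ — uniformly down to the lowest energy at which trapping is seen and across the finitely many angular modes that see it — rather than merely an $h^{-2+\epsilon}$ statement, and that the inhomogeneous blowup / positive-commutator argument used for the weakly unstable case in Theorem~\ref{T:dichotomy} is quantitative enough to deliver this. Everything else — the warped-product separation, the finiteness of the component count, and the absence of strong tunneling between components — is already in hand from the proof of Theorem~\ref{T:dichotomy}.
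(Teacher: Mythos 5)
Your proposal follows essentially the same route as the paper: analyticity rules out infinitely degenerate and cylindrical critical points, so every weakly unstable component falls under the finitely degenerate microlocal estimates (the nondegenerate maximum bound $h/\log(1/h)$, the degenerate maximum bound $h^{2m/(m+1)}$, and the inflection bound $h^{(4m_2+2)/(2m_2+3)}$), all of which give a genuine power gain $h^{-2+\delta}$ that survives the gluing over the finitely many components, while stable trapping yields quasimodes and Case~2. The ``main obstacle'' you flag is already settled by the explicit exponents in Propositions \ref{P:ml-inv-0}, \ref{P:ml-inv-1}, and \ref{P:ml-inv-2}, so no further quantitative sharpening of the commutator argument is needed.
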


\begin{remark} 

Upon rescaling to a semiclassical problem, Theorem \ref{T:dichotomy}
states that the semiclassical cutoff resolvent is either controlled by
$h^{-2-\epsilon}$ for any $\epsilon>0$, or blows up faster than
$h^{-N}$ for any $N$.  The 
corollary states that if the manifold is analytic, the first possibility can be replaced with
$h^{-2 + \delta}$ for some $\delta>0$ fixed, depending on the trapping.

\end{remark}

As usual, high energy resolvent estimates imply there are regions free
of resonances by simple perturbation of the spectral parameter.  On
the other hand, the proof of the alternative large growth of the
resolvent along a subsequence proceeds by quasimode construction.
Then 
if our metric has a complex analytic extension outside
of a compact set, we can apply the results of Tang-Zworski \cite{TaZw}
to conclude existence of resonances.  This results in the following
Corollary.

\begin{corollary}
In addition to the assumptions of Theorem 
\ref{T:dichotomy}, assume $X$ admits a complex analytic extension
outside of a compact set so that resonances may be defined by complex
scaling.  Then either

{\bf 1:} For every $\epsilon>0$ there is a constant $C_\epsilon>0$ such that
the region
\[
\{ \lambda \in \cx : | \Im \lambda | \leq C_\epsilon | \Re \lambda
|^{-1-\epsilon}, \,\, | \lambda | \gg 1 \}
\]
is free of resonances and the estimate \eqref{E:lambda-eps} holds
there (with a suitably modified constant), or

{\bf 2:} For every $N>0$, there exists a sequence of resonances $\{
\lambda_j \}$ such that
\[
| \Im \lambda_j | \leq | \Re \lambda_j |^{-N}, \,\, | \lambda_j | \to
\infty.
\]

In particular, if $X$ is analytic, then either there exists $\delta>0$ and
$C>0$ such that the region 
\[
\{  \lambda \in \cx : | \Im \lambda | \leq C | \Re \lambda
|^{-1+\delta}, \,\, | \lambda | \gg 1 \}
\]
is free from resonances, or there is a sequence converging to the real
axis at an arbitrarily fast polynomial rate.

\end{corollary}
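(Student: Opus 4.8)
The plan is to combine the two alternatives of Theorem~\ref{T:dichotomy} with two classical black-box inputs: the perturbation argument that converts high-energy resolvent bounds into resonance-free strips, and the Tang--Zworski quasimode-to-resonance machinery. In Case 1 of Theorem~\ref{T:dichotomy}, the estimate \eqref{E:lambda-eps} holds on the real axis. Fixing $\epsilon>0$, I would write the resolvent at a complex spectral parameter $\lambda = \mu + i\nu$, with $\mu = \Re\lambda \gg 1$ and $|\nu| \leq C_\epsilon \mu^{-1-\epsilon}$, and expand $(-\Delta - (\lambda - i0)^2)^{-1}$ around $(-\Delta - (\mu-i0)^2)^{-1}$ via the resolvent identity. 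The perturbing term has size $\|(\lambda^2 - \mu^2)\| \lesssim \mu \cdot \mu^{-1-\epsilon} = \mu^{-\epsilon}$ times a cutoff resolvent of norm $\lesssim \mu^{\epsilon'}$ (using \eqref{E:lambda-eps} again, with a second small exponent, together with the standard microlocal/elliptic bound on the non-cutoff piece away from the real axis); choosing the exponents so the Neumann series converges shows that $\phi(-\Delta - (\lambda-i0)^2)^{-1}\phi$ extends holomorphically with the bound \eqref{E:lambda-eps} (up to a modified constant) throughout the strip. This simultaneously gives that the strip is resonance-free and that the estimate persists there. The analytic case is identical, using \eqref{E:lambda-delta}: the perturbation has size $\mu \cdot \mu^{-1+\delta}\cdot \mu^{-\delta} = \mu^{0}$ — here I must be slightly careful and take the strip of half-width $c\,\mu^{-1+\delta}$ with $c$ small, so that the small-$c$ prefactor beats the $O(1)$ product and the Neumann series still converges.

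In Case 2 of Theorem~\ref{T:dichotomy}, for each $N$ we are handed $\phi \in \Ci_c(X)$, a sequence $\lambda_j \to \infty$, and the lower bound \eqref{E:lambda-bup-1}, i.e. $\|\phi(-\Delta - (\lambda_j - i0)^2)^{-1}\phi\| \geq C_N|\lambda_j|^N$. As noted in the introductory remarks to the excerpt, the proof of this alternative actually proceeds by \emph{quasimode construction}: there are $u_j$ with $\|u_j\| = 1$, $\phi u_j = u_j$ (or at least microlocally supported near the trapped set), and $\|(-\Delta - \lambda_j^2) u_j\| = o(\lambda_j^{-N'})$ for an $N'$ as large as desired. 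I would feed these quasimodes directly into the Tang--Zworski theorem~\cite{TaZw}, whose hypotheses are precisely a complex-analytic extension of the operator outside a compact set (assumed here, so resonances are defined by complex scaling) together with a compactly supported approximate eigenfunction with super-polynomially small error. The conclusion of \cite{TaZw} is a resonance $\lambda_j'$ within $O(|\lambda_j|^{-N})$ of $\lambda_j$ (after renaming $N$), which gives exactly the claimed sequence with $|\Im\lambda_j'| \leq |\Re\lambda_j'|^{-N}$ and $|\lambda_j'| \to \infty$. The final "in particular" clause for analytic $X$ is then just the specialization of the two cases: Corollary~\ref{C:little-o} replaces \eqref{E:lambda-eps} by the polynomially-decaying bound \eqref{E:lambda-delta}, which by the Case~1 argument above yields a resonance-free region of half-width $\sim |\Re\lambda|^{-1+\delta}$.

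The main obstacle is the bookkeeping in Case~1: one must verify that the cutoff resolvent bound \eqref{E:lambda-eps} (and the elliptic estimate for the complementary, non-compactly-supported part of the resolvent away from $\reals$) really does control the full operator entering the Neumann series \emph{uniformly} as $\Im\lambda \to 0$ and $|\Re\lambda|\to\infty$ simultaneously, so that the width $C_\epsilon|\Re\lambda|^{-1-\epsilon}$ of the resonance-free strip is genuinely governed by the exponent $\epsilon$ and not eroded by the perturbation. This is the standard "$\epsilon$-of-room" argument (cf. the gluing and black-box resolvent literature), so I expect it to go through, but it requires care with how many spare powers of $|\lambda|$ are available. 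Everything else — extracting quasimodes from Case~2, and invoking \cite{TaZw} — is essentially a citation once Theorem~\ref{T:dichotomy} and its proof are in hand.
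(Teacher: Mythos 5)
Your proposal is correct and follows essentially the same route as the paper, whose entire proof is the paragraph preceding the corollary: alternative 1 of Theorem \ref{T:dichotomy} (or \eqref{E:lambda-delta} in the analytic case) is converted into a resonance-free region by the standard perturbation of the spectral parameter, with the width $|\Im\lambda|\lesssim(|\lambda|\,g(|\lambda|))^{-1}$ coming from exactly the bookkeeping you describe, and alternative 2 is handled by feeding the compactly supported $\O(h^\infty)$ quasimodes from the stable-trapping construction into the Tang--Zworski quasimodes-to-resonances theorem \cite{TaZw}. Your added care with the Neumann series and the complex-scaled/elliptic region is the content the paper leaves implicit, so nothing further is needed.
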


\subsection{Resolvents and the local smoothing effect}
One of the many motivations for studying resolvents and resolvent
estimates is to understand the local smoothing effect for the
Schr\"odinger equation on manifolds with trapping.  It is well known
(see, for example, \cite{Tao-book,Doi}) that on asymptotically
Euclidean manifolds without trapping, solutions to the Schr\"odinger
equation enjoy a $1/2$ derivative local smoothing effect.  This says
that, locally in space, and on average in time, solutions are $1/2$
derivative smoother than the initial data.  To be precise, let $X$ be
such a manifold, $-\Delta$ the Laplacian on $X$, $u_0$ a Schwartz
function on $X$, and $\chi \in \Ci_c(X)$ a cutoff function.  Then the following estimate holds true for any $T>0$:
\[
\int_0^T \| \chi e^{i t \Delta} u_0 \|_{H^{1/2}(X)}^2 dt \leq C_T \|
u_0 \|_{L^2(X)}^2.
\]
There are several ways to prove such an estimate; one way proceeds
through resolvent estimates (see Section \ref{S:l-sm-pf-sec} below).
A nice benefit of using the resolvent formalism to understand local in
time local smoothing (that is, for finite $T$) is that one really sees
how the spectral estimates are related to the smoothing effect.  Since
one only needs a resolvent estimate in a fixed strip near the real
axis, if one is in a situation where the limiting resolvent blows up,
one simply uses the trivial bound away from the real axis to get a
zero derivative smoothing effect (or just integrates the $L^2(X)$ mass
in time).  However, if the limiting resolvent has some decay, then
there is a non-trivial local smoothing estimate.  This is the case,
for example, if the manifold is analytic and all of the trapping is
at least weakly unstable.  Let us state this as a corollary:

\begin{corollary}
\label{C:l-sm-cor}

Let $X$ be an analytic warped product manifold so that all of the assumptions of Corollary
\ref{C:little-o} hold.  Assume also that every connected component of
the trapped set is at least weakly unstable, so that conclusion $1$ of
Corollary \ref{C:little-o} holds for some $\delta>0$.  Then for all
$\chi \in \Ci_c(X)$, $u_0 \in \s (X)$, and $T>0$, there exists $C_T>0$
such that
\[
\int_0^T \| \chi e^{it \Delta} u_0 \|_{H^{\delta/2}(X)}^2 dt \leq C_T
\| u_0 \|_{L^2(X)}^2.
\]

\end{corollary}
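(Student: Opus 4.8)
The strategy is to deduce the local smoothing estimate from the high-frequency resolvent bound \eqref{E:lambda-delta} via a standard $TT^*$/Plancherel-in-time argument, combined with a low-frequency analysis that is trivial here because $X$ is nontrapping at infinity (short-range perturbation of Euclidean space with nontrapping ends). First I would write the solution $u(t) = e^{it\Delta}u_0$ and, after a cutoff in time $\psi \in \Ci_c(\RR)$ with $\psi \equiv 1$ on $[0,T]$, take the Fourier transform in $t$: if $v(t) = \psi(t) e^{it\Delta} u_0$, then $\hat v(\tau)$ solves $(-\Delta - \tau) \hat v(\tau) = F(\tau)$ where $F$ collects the commutator term $[\p_t,\psi] e^{it\Delta}u_0$ (whose $L^2_\tau L^2(X)$ norm is controlled by $\|u_0\|_{L^2}$). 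By Plancherel in time it then suffices to bound
\[
\int_{\RR} \| \chi (-\Delta - (\tau+i0))^{-1} \chi \|^2_{L^2 \to L^2}\,\|F(\tau)\|_{L^2(X)}^2 \, \ang{\tau}^{\delta/2}\, d\tau,
\]
so the whole estimate reduces to the operator-norm bound $\| \chi (-\Delta - (\tau + i0))^{-1}\chi\| \leq C \ang{\tau}^{-1/2 - \delta/4}$ uniformly for $\tau \in \RR$, $\tau \geq 0$.

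Next I would assemble this uniform resolvent bound in three frequency regimes. For $\tau = \lambda^2$ with $|\lambda|\gg1$, Corollary \ref{C:little-o}, conclusion 1, gives $\|\chi(-\Delta-(\lambda-i0)^2)^{-1}\chi\| \leq C|\lambda|^{-\delta}$; rewriting in terms of $\tau$ this is $\leq C\tau^{-\delta/2}$, which is stronger than the $\tau^{-1/2}$ that suffices, so the high-frequency regime is fine (this is exactly where the analyticity hypothesis and the weak-instability of every trapped component enter — they are the inputs that put us in case 1 of Corollary \ref{C:little-o}). For $\tau$ in a bounded set bounded away from $0$, the limiting resolvent is a bounded operator (continuity of $\tau \mapsto \chi R(\tau+i0)\chi$ away from thresholds and from the possibly embedded part of the spectrum — here absence of positive eigenvalues follows from the warped-product / short-range structure), so the integrand is uniformly bounded. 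For $\tau \to 0^+$, since $X$ has nontrapping Euclidean ends and dimension that makes $0$ a regular point (or more robustly, since only a fixed strip $|\Im\lambda|\leq 1$ matters and one may simply deform the contour slightly off the real axis near $\tau = 0$ and use the trivial bound $\|R(z)\| \leq 1/|\Im z|$), the low-frequency contribution is controlled; alternatively one absorbs the low-frequency part by noting $\int_0^1 \|\chi e^{it\Delta}u_0\|_{H^{\delta/2}}^2\,dt \leq \int_0^T \|e^{it\Delta}u_0\|_{L^2}^2\,dt \leq T\|u_0\|_{L^2}^2$ after commuting $\chi$ and a spectral cutoff to low frequencies, losing no derivatives there since low frequencies are harmless.

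The main obstacle is not conceptual but bookkeeping: one must carefully justify the passage from the exact solution to the cutoff-in-time solution and the treatment of the forcing term $F$, making sure the commutator $[\p_t,\psi]e^{it\Delta}u_0$ lies in $L^2_\tau(\RR; L^2(X))$ with the right norm (this uses unitarity of $e^{it\Delta}$ on $L^2$ and $\psi' \in \Ci_c$), and one must handle the spectral region $\tau \geq 0$ uniformly across thresholds, which is where one invokes the detailed resolvent picture from the body of the paper (continuity up to the real axis, no positive eigenvalues). I would organize the write-up as: (i) reduction to a uniform weighted resolvent estimate via Fourier transform in time and Plancherel; (ii) splitting into $\tau \gg 1$, $\tau \sim 1$, and $\tau \to 0^+$; (iii) invoking Corollary \ref{C:little-o} in regime (i), continuity of the limiting resolvent in regime (ii), and either the nontrapping low-frequency resolvent bound or the crude $L^2$ energy estimate in regime (iii); (iv) reassembling the pieces. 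This is the content of what would become Section \ref{S:l-sm-pf-sec}; the argument is a direct adaptation of the classical resolvent-to-local-smoothing mechanism (cf. \cite{Tao-book,Doi}), with the only new ingredient being the substitution of \eqref{E:lambda-delta} for the usual nontrapping $O(|\lambda|^{-1})$ bound.
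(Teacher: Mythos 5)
There is a genuine gap in your reduction step. After cutting off in time and Fourier transforming, your equation reads $(-\Delta-\tau)\hat v(\tau)=F(\tau)$ with $F(\tau)$ the transform of $\psi'(t)e^{it\Delta}u_0$, and this forcing is \emph{not} compactly supported in $x$; it is only in $L^2_\tau L^2(X)$ with norm controlled by $\|u_0\|_{L^2}$. Consequently you cannot write $\|\chi\hat v(\tau)\|\leq\|\chi(-\Delta-(\tau+i0))^{-1}\chi\|\,\|F(\tau)\|$: the cutoff resolvent estimate of Corollary \ref{C:little-o} does not apply to a source living on all of $X$, and no substitute is available with constant $\|u_0\|_{L^2}$. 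This is precisely why the paper does not Fourier transform the homogeneous evolution but runs the duality ($AA^*$) argument of Section \ref{S:l-sm-pf-sec}: boundedness of $Au_0=\chi e^{it\Delta}u_0$ from $L^2$ to $L^2([0,T];H^{s})$ is equivalent to boundedness of $AA^*$, and $AA^*f$ solves a Schr\"odinger equation whose forcing is $\chi f$, which \emph{is} compactly supported, so the cutoff resolvent can legitimately be inserted on both sides. (The paper also works on a fixed line $\Im z=-\eta<0$, where \eqref{E:lambda-delta} persists for $|z|\gg1$ by perturbation and the trivial bound $1/\eta$ handles bounded $|z|$; this sidesteps the limiting-absorption, threshold and embedded-eigenvalue points you would otherwise need to verify, though your fallback of deforming slightly off the axis is in the same spirit.)

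Your exponent bookkeeping is also off. In the $AA^*$ formulation the bound needed for $H^{\delta/2}$ smoothing is $\|\chi(-\Delta+z)^{-1}\chi\|_{L^2\to L^2}\leq C|z|^{-\delta/2}$ (i.e. $|\lambda|^{-\delta}$), which interpolated against the elliptic $L^2\to H^2$ gain yields $\|\chi(-\Delta+z)^{-1}\chi\|_{H^{-\delta/2}\to H^{\delta/2}}\leq C$; Corollary \ref{C:little-o} supplies exactly this, with no slack. Your stated target $\ang{\tau}^{-1/2-\delta/4}$ is both unnecessary and unobtainable (it would beat even the nontrapping rate $\ang{\tau}^{-1/2}$, which trapping forbids), and the comparison asserting that $\tau^{-\delta/2}$ is ``stronger than $\tau^{-1/2}$'' is backwards for small $\delta$. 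So as written the high-frequency step is closed by an incorrect comparison; repaired along the paper's lines, the argument does go through and gives precisely the $H^{\delta/2}$ estimate claimed.
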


\subsection*{Acknowledgements}
This research was partially supported by NSF grant DMS-0900524.  The
author would like to thank K. Datchev, L. Hillairet, J. Metcalfe, E. Schenck,
M. Taylor, A. Vasy, and
J. Wunsch for many helpful and stimulating discussions.

\section{Preliminaries}

\subsection{The geometry}

We have assumed that our manifold $X$ has a warped product structure with
one or two infinite ends which are short range perturbations of
$\reals^n$.  This means we are considering the manifold $X = \reals_x
\times \Omega^{n-1}_\theta$ (or $X = \reals_x^+ \times
\Omega^{n-1}_\theta$ if one infinite end), equipped with the metric
\[
g = d x^2 + A^2(x) G_\theta,
\]
where $A \in \Ci$ is a smooth function, $A \geq \epsilon>0$ for some
epsilon (or $A(x) = x$ for $x>0$ near $0$ if one infinite end), and $G_\theta$ is the metric on a smooth compact $n-1$ dimensional Riemannian
manifold $\Omega^{n-1}$ without boundary.    The short
range assumption means that as $| x | \to \infty$, we have
\[
| \partial^\alpha ( g - g_E ) | \leq C_\alpha \lll x \rrr^{-2 - |
  \alpha| },
\]
where 
\[
g_E = dx^2 + x^2 G_\theta.
\]
This means that $X$ is asymptotically Euclidean as $| x | \to \infty$.  This
assumption merely allows us to use standard techniques to glue
resolvent estimates together without worrying about trapping at
infinity.  The assumptions can of course be weakened to ``long-range''
perturbation (following Vasy-Zworski \cite{VaZw}), but this paper is
really about the local phenomenon of trapping rather than having the
most general ``infinity''.  We use the notation $\theta \in \Omega^{n-1}$ to denote the
``angular'' directions.  This is in analogue with the case of spherically
symmetric warped product spaces where $\Omega^{n-1} = \SS^{n-1}$ is
the sphere and $X$ is asymptotically $\reals^n$.

From this metric, we get the volume form
\[
d \Vol = A(x)^{n-1} dx d \sigma,
\]
where $\sigma$ is the volume measure on $\Omega^{n-1}$.  The
Laplace-Beltrami operator acting on $0$-forms is computed: 
\[
\Delta f = (\partial_x^2 + A^{-2} \Delta_{\Omega^{n-1}} + (n-1)A^{-1}
A' \partial_x) f,
\]
where $\Delta_{\Omega^{n-1}}$ is the (non-positive) Laplace-Beltrami
operator on $\Omega^{n-1}$.

We want to exploit the warped product structure to reduce spectral questions
to a one-dimensional problem.  Let us first conjugate to a problem on
the flat cylinder.  That is, let $Tu(x, \theta) = A^{(n-1)/2}(x) u(x,
\theta)$ so that $\tDelta = T \Delta T^{-1}$ is essentially
self-adjoint on $L^2(dx d \sigma)$, where $\sigma$ is the usual
volume measure on $\Omega^{n-1}$.  We have
\[
-\tDelta = -\p_x^2 - A^{-2}(x) \Delta_\Omega + V_1(x),
\]
where
\[
V_1(x) = \frac{n-1}{2} A'' A^{-1} - \frac{(n-1)(n-3)}{4} (A')^2 A^{-2}.
\]
Separating variables we write for $u \in L^2(dx d \sigma)$ 
\[
u(x, \theta) = \sum_{l,k} u_{lk}(x) \phi_{lk}(\theta),
\]
where $\phi_{lk}(\theta)$ are the eigenfunctions on $\Omega^{n-1}$ with eigenvalue
$\lambda_k^2$.  Then
\[
-\tDelta u = \sum_{l,k} \phi_{lk}(\theta) Q_k u_{lk},
\]
where
\[
Q_k \phi(x) = ( -\p_x^2 + \lambda_k^2 A^{-2}(x) + V_1(x) ) \phi(x).
\]
Setting $h = \lambda_k^{-1}$ and rescaling, we end up with the
semiclassical operator
\[
P(h) \phi(x) = (-h^2 \p_x^2 + V(x) ) \phi(x),
\]
where
\[
V(x) = A^{-2}(x) + h^2V_1(x).
\]
We sometimes will write $V_0(x) = A^{-2}(x)$ for the principal part of
the effective potential.

The semiclassical versions of Theorem \ref{T:dichotomy} and Corollary
\ref{C:little-o} are given in the following.

\begin{theorem}
\label{T:dichotomy-sc}
Under the assumptions above, either

{\bf 1:}  For every $\phi \in \Ci_c(\reals)$ and every $\epsilon>0$ there exists $C_\epsilon
>0$ such that
\[
\| \phi (-h^2 \p_x^2 + V -(z-i0))^{-1} \phi \| \leq C_\epsilon h^{-2-\epsilon} ,
\,\,\, z \in I,
\]
for a compact interval $I$, or

{\bf 2:} For every $N>0$, there exists $\phi \in \Ci_c(X)$, $C_N>0$,  and $z \in \reals$, $z \neq 0$,  such that 
\[
\| \phi (-h^2\p_x^2 + V - (z-i0))^{-1} \phi \| \geq C_N h^{-N},
\]
along a subsequence as $h \to 0+$.

\end{theorem}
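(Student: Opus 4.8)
The plan is to deduce Theorem~\ref{T:dichotomy-sc} by spectral-parameter bookkeeping from a finer, fully microlocalized analysis of the one-dimensional semiclassical Schr\"odinger operator $P(h) = -h^2\p_x^2 + V(x)$ obtained after separation of variables, so the real content lies in the classification of microlocal resolvent behaviour near each connected component of the trapped set, not in the passage to the statement above. First I would recall that the trapped set for the symbol $p = \xi^2 + V_0(x)$ at energy $z$ is governed entirely by the critical points of the effective potential $V_0 = A^{-2}$ that lie at height $z$: away from these points the flow is non-trapping, and the short-range assumption at infinity (together with the gluing machinery of Datchev--Vasy, cf.\ Appendix~\ref{A:gluing}) lets one glue a non-trapping resolvent estimate of size $h^{-1}$ outside a fixed compact set onto whatever estimate holds microlocally near the trapped set. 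Since the trapped set has finitely many connected components, it suffices to understand, for each critical value $z_0$ of $V_0$, the worst-case microlocal estimate $\| \chi (P(h)-z)^{-1}\chi\|$ for $z$ near $z_0$, where $\chi$ is supported near the corresponding critical point(s).

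The dichotomy then comes from a trichotomy-collapsing classification of critical points. A nondegenerate maximum of $V_0$ (an unstable hyperbolic orbit) gives the classical logarithmic-loss estimate $O(h^{-1}\log(1/h))$, which is far better than $h^{-2-\epsilon}$. More generally, any critical point at which $V_0$ has \emph{some} instability — a local max, or a degenerate critical point of finite order that is ``weakly unstable'' in the sense that trajectories escape — should, by the inhomogeneous positive-commutator / blowup argument advertised in the abstract, yield a polynomial resolvent bound $O(h^{-2+\delta(k)})$ for some $\delta(k)>0$ depending on the degeneracy order $k<\infty$; taking the infimum over the finitely many components and using $0$-Gevrey regularity to bound the order $k$ uniformly gives a uniform $h^{-2+\delta}$, hence in particular $h^{-2-\epsilon}$, which is Case~1. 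Conversely, a local \emph{minimum} of $V_0$ — or more generally any weakly stable component, such as an elliptic-type critical point or a shelf where trajectories can be trapped to high order — supports Gaussian-beam-type quasimodes $u_h$ with $\|(P(h)-z_h)u_h\| = O(h^\infty)\|u_h\|$ concentrated near the critical point; feeding such a quasimode into the resolvent and using that $\chi u_h$ is bounded below gives $\|\chi(P(h)-z_h)^{-1}\chi\| \gtrsim h^{-N}$ for every $N$ along a subsequence $h\to 0+$, which is Case~2. One then checks these are genuinely exhaustive: at a critical value $z_0$ of $V_0$ the set of critical points at that height is finite, each is either ``weakly unstable'' (escaping) or ``weakly stable'' (trapping), and if even one component anywhere is weakly stable we land in Case~2, otherwise every component is weakly unstable and the glued estimate puts us in Case~1.

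The main obstacle — and the technical heart flagged in the abstract — is establishing the polynomial bound $O(h^{-2+\delta})$ in the weakly-unstable-but-degenerate case. For a nondegenerate maximum the positive commutator argument with escape function built from $x\xi$ is standard; for a critical point where $V_0(x) - z_0 \sim -c(x-x_0)^{2k}$ with $k$ large, the naive escape function degenerates and one must run a delicate inhomogeneous blowup (rescaling $x-x_0$ and $\xi$ at mismatched rates adapted to $k$) so that the commutator becomes positive on a shrinking neighbourhood, tracking how the loss degrades as $k\to\infty$; this is precisely why finite order ($0$-Gevrey, $\tau<\infty$) is needed, since otherwise $\delta(k)\to 0$ and no uniform power of $h$ survives. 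A secondary obstacle is ruling out ``tunneling'' between distinct connected components at the same energy — one must show the off-diagonal contribution of the resolvent between two far-apart critical points is negligible compared with the on-diagonal estimates, which follows from an Agmon-type exponential weight in the forbidden region but requires care when the components are only weakly unstable. Once these local estimates and the no-tunneling statement are in hand, assembling them with the gluing theorem and then perturbing the spectral parameter to translate into the resolvent bounds of Theorem~\ref{T:dichotomy-sc} is routine.
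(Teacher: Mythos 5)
There is a genuine gap, and it sits exactly at the technical heart of the theorem. Your Case~1 argument rests on the claim that every weakly unstable critical point has \emph{finite} degeneracy order $k$, ``using $0$-Gevrey regularity to bound the order $k$ uniformly,'' so that the worst microlocal loss is $h^{-2+\delta}$ with $\delta = \inf_k \delta(k) > 0$. This is false: the class $\GG^0_\tau$ with $\tau<\infty$ contains functions like $\exp(-1/x^p)$ and even functions constant on intervals, so under the hypotheses of Theorem~\ref{T:dichotomy-sc} the potential $V_0=A^{-2}$ may have infinitely degenerate maxima, infinitely degenerate inflection points, and whole intervals of critical points (``cylindrical'' trapping). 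These configurations are not excluded — they are the reason the theorem only asserts $h^{-2-\epsilon}$ for every $\epsilon>0$ rather than $h^{-2+\delta}$; the fixed-$\delta$ improvement is precisely Corollary~\ref{C:little-o-sc} and requires analyticity. Your plan proves nothing in these cases: as you yourself note, $\delta(k)\to 0$ as $k\to\infty$, so no finite-order commutator estimate survives the limit, and the $0$-Gevrey hypothesis does not rescue you by capping $k$. In the paper the role of $\GG^0_\tau$ is entirely different: it is used \emph{inside} the infinitely degenerate analysis (Subsections~\ref{SS:unst-inf} and \ref{SS:inf-deg-infl}, Propositions~\ref{P:ml-inv-3a}--\ref{P:ml-inv-4b}) to compare higher derivatives of $V_0$ with $V_0'$ near the critical point, so that the sub-potential $h^2V_1$ and the third-order terms in the symbol expansion can be absorbed (Lemmas~\ref{L:g2g3} and \ref{L:Q-comm-error-3a}) after perturbing $V_0$ by an $h$-dependent bump $W_h$ of size $\Gamma(h)\sim h^{2+\eta}$ that creates an artificial \emph{finitely} degenerate critical point; this perturb-and-compare step, with all the parameter matching ($x_0(h)$, $\varpi(h)$, $\Gamma(h)$, the inhomogeneous $\lambda$-rescaling), is the missing content, and without it your Case~1 does not cover the admissible potentials.

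A related misclassification compounds the problem: you place a ``shelf where trajectories can be trapped to high order'' on the weakly stable (quasimode) side. In the paper an interval on which $V_0$ is constant at a maximum or inflection value is on the \emph{unstable} side and satisfies the polynomial bound $h^{-2-\eta}$ (Propositions~\ref{P:ml-inv-3b} and \ref{P:ml-inv-4b}); no $\O(h^\infty)$ quasimode construction is available there, so your Case~2 route fails for shelves and your Case~1 route has no estimate for them — the dichotomy would be left unproven exactly in those configurations. Finally, a more minor divergence: the paper does not need Agmon weights to rule out tunneling between components at the same energy; it observes that two such components must be separated by a maximum at a strictly higher level (else there is a minimum and one is in Case~2), so the separating stable/unstable manifolds form a separatrix, and the microlocal estimates are assembled by the propagation-of-singularities gluing of Appendix~\ref{A:gluing}. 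Your Case~2 construction at a genuine local minimum (quasimodes with $\O(h^\infty)$ error, cf.\ Subsection~\ref{SS:stable}) is fine and in the same spirit as the paper's Weyl-law argument.
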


\begin{corollary}
\label{C:little-o-sc}
In addition to the assumptions of Theorem \ref{T:dichotomy-sc}, assume
that the manifold $X$ is analytic.

{\bf 1:}  There exists $\delta>0$ such that, for every $\phi \in
\Ci_c(\reals)$ there is a constant  $C
>0$ for which we have the following estimate
\[
\| \phi (-h^2 \p_x^2 + V -(z-i0))^{-1} \phi \|  \leq C h^{-2+\delta} ,
\,\,\, z \in I,
\]
for a compact interval $I$, or

{\bf 2:} For any $N>0$, there exists $\phi \in \Ci_c(X)$, $C_N>0$, and $z \in \reals$, $z \neq 0$, such that 
\[
\| \phi (-h^2\p_x^2 + V - (z-i0))^{-1} \phi \| \geq C_N h^{-N}
\]
along a subsequence as $h \to 0+$.

\end{corollary}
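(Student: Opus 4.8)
The plan is to obtain Corollary~\ref{C:little-o-sc} by rerunning the proof of Theorem~\ref{T:dichotomy-sc} and observing that the hypothesis of analyticity excludes the single phenomenon responsible for the loss $h^{-2-\epsilon}$, namely infinitely degenerate weakly unstable trapping. Recall from Section~2 that the spectral problem for $-\Delta$ separates into the family of one-dimensional semiclassical operators $P(h) = -h^2\p_x^2 + V$, with $V = V_0 + h^2 V_1$ and principal effective potential $V_0 = A^{-2}$; the whole analysis is driven by the critical values of $V_0$ lying in the compact energy window $I \subset (0,\infty)$. When $X$ is analytic, $A$ is real-analytic and bounded below by $\epsilon>0$, so $V_0$ is real-analytic and non-constant (it tends to $0$ at each infinite end). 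If $V_0$ has a local minimum in the region of interest --- equivalently $A$ has a local maximum --- this is a weakly stable trapped trajectory, the quasimode construction in the proof of Theorem~\ref{T:dichotomy-sc} applies verbatim, and we land in conclusion~2. So from now on I assume that every critical point of $V_0$ the flow meets at an energy in $I$ is weakly unstable.

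The first point to extract from analyticity is finite degeneracy. A non-constant real-analytic function cannot vanish to infinite order, so each critical point $x_0$ of $V_0$ is isolated and satisfies $V_0(x) - V_0(x_0) = c\,(x-x_0)^{m}\bigl(1 + O(x-x_0)\bigr)$ for some $c \neq 0$ and a finite integer $m = m(x_0) \geq 2$. Since the trapped set has only finitely many connected components and $I$ is compact, there are finitely many critical values $E_1,\dots,E_L \in I$, each of finite maximal degeneracy order $m_j$, and I put $M = \max_j m_j < \infty$. The lower-order piece $h^2 V_1$ only displaces critical points and critical values by $O(h^2)$ and does not disturb this normal form, so it plays no role in what follows.

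The main step --- and what I expect to be the principal obstacle --- is the quantitative microlocal resolvent estimate at a weakly unstable critical point of finite order $m$. Here one runs the inhomogeneous blowup argument (the ``delicate inhomogeneous blowup procedure loosely based on the positive commutator argument'' of the proof of Theorem~\ref{T:dichotomy-sc}), but now tracking constants: near $x_0$ one rescales $x - x_0 = h^{\rho} y$ with $\rho = 2/(m+2)$, so that the kinetic term $h^2\p_x^2$ and the potential fluctuation $(x-x_0)^m$ both sit at scale $h^{2m/(m+2)}$ and the escape time from a neighborhood of the trapped point is a genuine power of $h$ rather than an inverse logarithm as in the infinitely degenerate case. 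Pushing this through should give, uniformly for $z$ in an $h$-independent neighborhood of $E_j$, an estimate $\| \phi (P(h) - (z-i0))^{-1}\phi\| \leq C h^{-2+\delta_j}$ with $\delta_j>0$ depending only on $m_j$ --- with $\delta_j$ arbitrarily close to $1$ in the nondegenerate hyperbolic case $m_j = 2$, and $\delta_j \to 0$ as $m_j \to \infty$, which is exactly why the statement collapses to Theorem~\ref{T:dichotomy-sc} in the merely smooth setting. The delicate part is to arrange a genuinely positive commutant in the blown-up variables despite the high order of vanishing of $V_0 - E_j$, and to keep the bound uniform as $z$ sweeps a full neighborhood of $E_j$, so that for $z$ bounded away from all the $E_j$ one recovers the non-trapping bound $O(h^{-1})$.

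Finally I would assemble the estimate: for $z \in I$ bounded away from the critical values the geodesic flow is non-trapping and the cutoff resolvent is $O(h^{-1})$; near each $E_j$ it is $O(h^{-2+\delta_j})$ by the previous step; the two short-range infinite ends are non-trapping and are spliced to the compact trapped region using the Datchev--Vasy gluing theorem \cite{DaVa-gluing} (Appendix~\ref{A:gluing}); and the angular modes whose rescaled energy lies outside $I$ are controlled exactly as in the proof of Theorem~\ref{T:dichotomy-sc}. Taking $\delta = \min_j \delta_j > 0$, a quantity depending only on the trapping geometry, and summing over the finitely many trapped components yields conclusion~1 (with $C$ depending on $\phi$), while --- as noted above --- the presence of any local minimum of $V_0$ instead lands us in conclusion~2.
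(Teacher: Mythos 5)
Your proposal follows essentially the same route as the paper: analyticity rules out the infinitely degenerate and cylindrical critical points (the only sources of the $h^{-2-\epsilon}$ loss), stable trapping triggers the quasimode alternative, and each remaining finitely degenerate maximum or inflection point yields a microlocal bound $\O(h^{-2+\delta_j})$ — your unified exponent $h^{2m/(m+2)}$ in terms of the vanishing order $m$ agrees with Propositions \ref{P:ml-inv-1} and \ref{P:ml-inv-2} — which are then glued as in the proof of Theorem \ref{T:dichotomy-sc} with $\delta=\min_j\delta_j$. This is correct and matches the paper's argument.
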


\subsection{The 0-Gevrey class $\GG^0_\tau$}

Our manifolds already have very nice geometry as $| x | \to \infty$,
and moreover we have separated variables.  Since $\Omega^{n-1}$ is a
$\Ci$ compact manifold without boundary, the only additional
regularity assumptions we need to impose will be at the critical elements
of the manifold $X$, that is, at the critical points of the function $A(x)$.  
In order to have a
meaningful symbol class (especially once we are working with the
calculus of 2 parameters), we need to know that near the critical
elements, the function $A$ is not too far away from being analytic.  For this, we introduce the
following 
0-Gevrey classes of functions with respect to order of vanishing.  For $0 \leq \tau <
\infty$, let $\GG^0_\tau (
\reals )$ be the set of all smooth functions $f : \reals \to \reals$
such that, for each $x_0 \in \reals$, there exists a neighbourhood $U \ni
x_0$ and a constant $C$ such that, for all $0 \leq s \leq k$, 
\[
| \partial_x^k f(x) -\partial_x^k f(x_0) | \leq C (k!)^C | x - x_0
|^{-\tau (k-s) } | \partial_x^s f(x) - \partial_x^s f(x_0) |, \,\,\, x
\to x_0 \text{ in } U.
\]
This definition says that the order of vanishing of derivatives of a
function is only polynomially worse than that of lower derivatives.
Every analytic function is in one of the 0-Gevrey classes $\GG^0_\tau$
for some $\tau < \infty$, but many more functions are as well.   
For example, the function
\[
f(x) = \begin{cases} \exp (-1/x^p), \text{ for } x >0, \\ 0, \text{ for }x \leq 0
\end{cases}
\]
is in $\GG^0_{p+1}$, but
\[
f(x) = \begin{cases} \exp (-\exp(1/x)), \text{ for } x >0, \\ 0, \text{ for }x \leq 0
\end{cases}
\]
is not in any 0-Gevrey class for finite $\tau$.  This implies that the
0-Gevrey class contains a rich subset of functions with compact
support as well as functions which are constant on intervals.

The 0-Gevrey class assumption will only come in to play in the
case of infinitely degenerate critical points (see Subsection \ref{SS:unst-inf}).

\subsection{Semiclassical calculus with $2$ parameters}

Following Sj\"ostrand-Zworski \cite[\S3.3]{SjZw-frac} and \cite{ChWu-lsm}, we introduce a
calculus with two parameters.  We will not present the proofs in the
following lemmas, as they have appeared in several other places, but
merely include the statements for the reader's convenience, as well as
pointers to where proofs can be found.

For $\alpha\in [0,1]$ and $\beta\leq 1-\alpha,$ we let
\begin{eqnarray*}
\lefteqn{\s_{\alpha,\beta}^{k,m, \widetilde{m}} \left(T^*(\RR^n) \right):= } \\
& = & \Bigg\{ a \in \Ci \left(\RR^n \times (\RR^n)^* \times (0,1]^2 \right):  \\
 && \quad \quad  \left| \partial_x^\rho \partial_\xi^\gamma a(x, \xi; h, \tilde{h}) \right| 
\leq C_{\rho \gamma}h^{-m}\tilde{h}^{-\widetilde{m}} \left(
  \frac{\tilde{h}}{h} \right)^{\alpha |\rho| + \beta |\gamma|} 
\langle \xi \rangle^{k - |\gamma|} \Bigg\}.
\end{eqnarray*}
Throughout this work we will always assume $\tilde{h} \geq h$.  
We let $\Psi_{\alpha,\beta}^{k, m, \widetilde{m}}$ denote the
corresponding spaces of semiclassical pseudodifferential operators
obtained by Weyl quantization of these symbols. We will sometimes add a
subscript of $h$ or $\tilde{h}$ to indicate which parameter is used in
the quantization; in the absence of such a parameter, the quantization
is assumed to be in $h.$  The class $\s_{\alpha,\beta}$ (with no
superscripts) will denote $\s_{\alpha,\beta}^{0,0,0}$ for brevity.

In \cite{SjZw-frac} (for the homogeneous case $\alpha=\beta=1/2$), and
in \cite{ChWu-lsm} (for the inhomogeneous case $\alpha \neq \beta$), 
it is observed that the composition in the
calculus can be computed in terms of a symbol product that converges
in the sense that terms improve in $\tilde{h}$ and $\xi$ orders, but
not in $h$ orders.
This happens because when $\alpha + \beta = 1$, the $(h^{-\alpha},
h^{-\beta})$ calculus is marginal, which is what the rescaling (blowup) and
introduction of the second parameter $\tilde{h}$ accomplishes.  
In the sequel, we will always assume we are in the inhomogeneous
marginal case:
$$
\alpha+\beta=1.
$$
If $\alpha + \beta <1$, then of course the calculus is no longer
marginal and computations become much easier.

By the same arguments employed in \cite{SjZw-frac} (see \cite{ChWu-lsm}), we may easily
verify that the calculus $\Psi_{\alpha,\beta}$ is closed under composition: if
$ a \in   \s^{k,m , {\widetilde m} }_{\alpha,\beta}$ and
$ b \in   \s^{k',m', \widetilde m''}_{\alpha,\beta} $ then 
\[ \Op_h^w (a) \circ \Op_h^w(b) = \Op_h^w (c)
\ \text{ with } \ 
c \in   \s^{k+k',m +m', {\widetilde m}+ {\widetilde m}'  }_{\alpha,\beta}\,.
\]
In addition, as in \cite{ChWu-lsm}, we have a symbolic expansion for
$c$ in powers of $ \tilde h $. 

We have the following Lemma from \cite{ChWu-lsm}, which is a more
general version of \cite[Lemma 3.6]{SjZw-frac}:
\begin{lemma}
\label{l:err}
Suppose that 
$ a, b \in \s_{\alpha,\beta}$, 
and that $ c^w = a^w \circ b^w $. 
Then 
\begin{equation}
\label{eq:weylc}  c ( x, \xi) = \sum_{k=0}^N \frac{1}{k!} \left( 
\frac{i h}{2} \sigma ( D_x , D_\xi; D_y , D_\eta) \right)^k a ( x , \xi) 
b( y , \eta) |_{ x = y , \xi = \eta} + e_N ( x, \xi ) \,,
\end{equation}
where for some $ M $
\begin{equation}
\label{eq:new1}
\begin{split}
& | \partial^{\gamma} e_N | \leq C_N h^{N+1}
 \\
& \ \ 
\times \sum_{ \gamma_1 + \gamma_2 = \gamma } 
 \sup_{ 
{{( x, \xi) \in T^* \RR^n }
\atop{ ( y , \eta) \in T^* \RR^n }}} \sup_{
|\rho | \leq M  \,, \rho \in \NN^{4n} }
\left|
\Gamma_{\alpha, \beta, \rho,\gamma }(D)
( \sigma ( D) ) ^{N+1} a ( x , \xi)  
b ( y, \eta ) 
\right| \,,
\end{split} 
\end{equation}
where $ \sigma ( D) = 
 \sigma ( D_x , D_\xi; D_y, D_\eta )  $ as usual,  
and 
\[
\Gamma_{\alpha, \beta, \rho,\gamma }(D) =( h^\alpha \pa_{(x,y)},
h^\beta \pa_{(\xi,\eta)}))^\rho \partial^{\gamma_1} 
\partial^{\gamma_2}.
\]
\end{lemma}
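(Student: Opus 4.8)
The plan is to follow the classical stationary-phase derivation of the Weyl composition formula, but to organize the remainder so that every derivative that appears is taken in one of the admissible combinations $h^\alpha\partial_{(x,y)}$, $h^\beta\partial_{(\xi,\eta)}$ built into the two-parameter class $\s_{\alpha,\beta}$. First I would start from the exact Weyl product
\[
(a\# b)(x,\xi)=\frac{1}{(\pi h)^{2n}}\iint e^{-\frac{2i}{h}\sigma\big((x,\xi)-(y,\eta),\,(x,\xi)-(z,\zeta)\big)}\,a(y,\eta)\,b(z,\zeta)\,dy\,d\eta\,dz\,d\zeta
\]
and rewrite it in operator form as $c(x,\xi)=e^{\frac{ih}{2}\sigma(D_x,D_\xi;D_y,D_\eta)}\big(a(x,\xi)\,b(y,\eta)\big)\big|_{x=y,\,\xi=\eta}$, where $e^{\frac{ith}{2}\sigma(D)}$ is the metaplectic Fourier multiplier attached to the real, nondegenerate, indefinite quadratic form $\sigma$ on $\RR^{4n}$, interpreted as an oscillatory integral in the usual regularized sense. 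Taylor's theorem in $t$ to order $N$, applied to $t\mapsto e^{\frac{ith}{2}\sigma(D)}$ and evaluated at $t=1$, produces exactly the finite sum in \eqref{eq:weylc} together with the integral remainder
\[
e_N(x,\xi)=\frac{1}{N!}\int_0^1(1-t)^N\Big(\tfrac{ih}{2}\sigma(D)\Big)^{N+1}e^{\frac{ith}{2}\sigma(D)}\big(a\otimes b\big)\,dt\,\Big|_{\mathrm{diag}}.
\]

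The core of the argument is the estimate on $\partial^\gamma e_N$. Differentiating under the integral, applying Leibniz to split $\partial^\gamma=\partial^{\gamma_1}\partial^{\gamma_2}$ across the tensor product, and commuting the constant-coefficient operators $\partial^{\gamma_1}\partial^{\gamma_2}$ and $(\sigma(D))^{N+1}$ past the multiplier $e^{\frac{ith}{2}\sigma(D)}$, one is reduced to bounding $\big\| e^{\frac{ith}{2}\sigma(D)} G\big\|_{L^\infty}$ for $G=(\sigma(D))^{N+1}(a\otimes b)$ and finitely many of its derivatives, uniformly in $t\in[0,1]$ and in $h,\th$. Since $\sigma$ is a real quadratic form, $e^{\frac{ith}{2}\sigma(D)}$ is unitary on $L^2(\RR^{4n})$; it does not preserve $L^\infty$, so one passes through $L^2$ and pays a fixed number $M$ of extra phase-space derivatives (with $M$ depending only on $n$) via Sobolev embedding on $\RR^{4n}$ — equivalently, one integrates by parts $M$ times in the Gaussian representation of the multiplier. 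This is precisely the origin of the inner $\sup_{|\rho|\le M}$ in \eqref{eq:new1}. The essential bookkeeping point, already encoded in the statement, is that each factor $\tfrac{ih}{2}\sigma(D)$ contributes one derivative in an $(x,\xi)$-slot and one in a $(y,\eta)$-slot; pairing each with the $h^\alpha$ or $h^\beta$ it needs to become an admissible $\Gamma_{\alpha,\beta,\rho,\gamma}(D)$-type derivative and using $\alpha+\beta=1$ converts $h^{N+1}(\sigma(D))^{N+1}$ into $h^{N+1}$ times the $\Gamma$-normalized object of \eqref{eq:new1}. When $a,b\in\s_{\alpha,\beta}$ this object is $O\big(h^{N+1}(\th/h)^{N+1}\langle\xi\rangle^{-(N+1)}\big)=O\big(\th^{N+1}\langle\xi\rangle^{-(N+1)}\big)$, so the expansion genuinely improves in powers of $\th$ and in $\xi$-order but not in $h$; the symbolic expansion in powers of $\th$ then follows by iterating with $a,b$ in the general classes $\s^{k,m,\widetilde{m}}_{\alpha,\beta}$.

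The main obstacle is the marginality of the calculus: because $\alpha+\beta=1$ the underlying $(h^{-\alpha},h^{-\beta})$ rescaling sits exactly at the Calderón--Vaillancourt borderline, so the remainder estimate cannot afford to lose even a single power of $\th/h$. Concretely, the metaplectic operator $e^{\frac{ith}{2}\sigma(D)}$ must be controlled uniformly for all $t\in[0,1]$ and all $0<h\le\th\le 1$ between the relevant rescaled symbol spaces, and no crude $L^\infty\to L^\infty$ bound is available; the correct route is the $L^2$-isometry together with exactly $M$ extra derivatives, which is why \eqref{eq:new1} is phrased in that somewhat elaborate form rather than simply as $|\partial^\gamma e_N|\le C_N\th^{N+1}\langle\xi\rangle^{-(N+1)}$. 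Once this uniform control is in hand, the remaining steps — Leibniz, rescaling of derivatives, and substitution of a particular symbol class at the very end — are routine. Since the homogeneous case $\alpha=\beta=1/2$ is \cite[Lemma 3.6]{SjZw-frac} and the inhomogeneous modification is carried out in \cite{ChWu-lsm}, I would perform the detailed stationary-phase and integration-by-parts computations by following those references, changing only the rescaling exponents.
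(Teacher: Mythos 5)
Your proposal is correct and follows essentially the same route as the paper's source for this lemma: the paper itself omits the proof and cites \cite[Lemma 3.6]{SjZw-frac} and \cite{ChWu-lsm}, whose arguments proceed exactly as you describe — exact Weyl product written as $e^{\frac{ih}{2}\sigma(D)}$ applied to $a\otimes b$, Taylor expansion in $t$ with integral remainder, and control of the quadratic Fourier multiplier on symbols at the cost of a fixed number $M$ of derivatives, with the $h^\alpha$/$h^\beta$ bookkeeping and $\alpha+\beta=1$ yielding gains in $\tilde h$ and $\xi$-order but not in $h$. Your accounting of the marginal case and of the error size matches the paper's remark following the lemma and the estimate \eqref{eq:abc}.
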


As a particular consequence we notice that if $ a \in 
\s_{\alpha,\beta} ( T^* \RR^n)  $ and $ b \in  \s ( T^* \RR^n ) $ then 
\begin{align}
\label{eq:abc}
  c ( x, \xi) = &
 \sum_{k=0}^N \frac{1}{k!} \left( 
i h \sigma ( D_x , D_\xi; D_y , D_\eta) \right)^k a ( x , \xi) 
b( y , \eta) |_{ x = y , \xi = \eta} \\
& + {\mathcal O}_{\s_{\alpha,\beta}}
\left( h^{N+1} \max \left\{ (\tilde{h}/h)^{(N+1) \alpha}, (\tilde{h}/h)^{(N+1) \beta} \right\} \right)
\,. \notag
\end{align}

We will let $\B$ denote the ``blowdown map'' 
\begin{equation}\label{blowdown}
(x,\xi)=\B(X,\Xi)=((h/\th)^\alpha X, (h/\th)^\beta \Xi).
\end{equation}
The spaces of operators $\Psi_h$ and 
$\Psi_{\tilde{h}}$ are related via a unitary rescaling in the
following fashion.  
Let $a \in \s_{\alpha,\beta}^{k,m,\tilde{m}}$, and consider the
rescaled symbol
\be
a\left(\csh^{\alpha}X, \csh^{\beta} \Xi
\right)= a \circ \B \in \s_{0,0}^{k,m,\tilde{m}}.
\ee
Define the unitary operator $T_{h, \tilde{h}} u(X) = \csh^{\frac{n\alpha}{2}}u\left(
  \csh^{\alpha} X \right)$,   
so that
\be\label{rescaledquantization}
\Op_{\tilde{h}}^w(a\circ B) T_{h, \tilde{h}} u= T_{h, \tilde{h}} \Op_h^w(a) u.
\ee

\section{The trapping}

In order to prove Theorem \ref{T:dichotomy-sc} and Corollary \ref{C:little-o-sc}, we consider the
critical points of the potential $V(x)$, or more specifically the
critical points of the principal part, $V_0(x) = A^{-2}$, of $V = A^{-2} + h^2 V_1$.  The assumption that the
trapped set has only finitely many connected components implies that
the potential $V_0(x)$ has only finitely many critical {\it values}.  We break
the analysis of the critical values into those for which the
Hamiltonian flow of the principal part of our symbol, $p_0 = \xi^2 + V_0(x)$, is locally unstable (either
``genuinely'' unstable or of transmission inflection type), and those
for which the Hamiltonian flow is stable.  This leads to the dichotomy
in Theorem \ref{T:dichotomy-sc} and Corollary \ref{C:little-o-sc}.
The idea is that, if there is a critical value for which the
Hamiltonian flow is stable, then we can immediately construct very
good quasimodes and reach the second conclusions in Theorem
\ref{T:dichotomy-sc} and Corollary \ref{C:little-o-sc}.  This is
relatively straightforward and written in Subsection \ref{SS:stable}.

On the other hand, if there is no stable trapping, then all trapping
is unstable, consisting of 
disjoint critical sets, and even if two critical sets exist at the
same potential energy level, they must be separated by an unstable
maximum 
critical value at a {\it higher} potential energy level (otherwise
there would be a minimum in between, and hence at least weakly stable trapping), so they do
not see each other.  That is to say, the weakly stable/unstable
manifolds of the separating maximum form a separatrix in the reduced
phase space.  This allows us to glue together microlocal
estimates near each critical set, and the resolvent estimate is then
simply the worst of these estimates.  Hence it suffices to classify
microlocal resolvent estimates in a neighbourhood of any of these
unstable critical sets.  This is accomplished in Subsections
\ref{SS:unst-nd}-\ref{SS:inf-deg-infl}.  In this sense, this section
contains a catalogue of microlocal resolvent estimates.

It is important to note at this point that for unstable trapping of
finite degeneracy, the relevant resolvent estimates are all
$o(h^{-2})$, that is to say, the sub-potential $h^2 V_1$ is always of
lower order.  If the trapping is unstable but infinitely degenerate,
we need to work harder to absorb the sub-potential.  The 0-Gevrey
assumption will be important here.

\subsection{Unstable nondegenerate trapping}

\label{SS:unst-nd}

Unstable nondegenerate trapping occurs when the potential $V_0$ has a
nondegenerate maximum.  As mentioned previously, let us for the time
being consider the operator $\tQ = -h^2 \p_x + V_0(x)-z$, where $V_0(x) =
A^{-2}(x)$.  To say that $x = 0$ is a nondegenerate maximum means that
 $x = 0$ is a critical point of
$V_0(x)$ satisfying $V_0'(0) = 0$, $V_0''(0) < 0$, and then the Hamiltonian flow
of $\tq = \xi^2 + V_0(x)$ near $(0,0)$ is
\[
\begin{cases}
\dot{x} = 2 \xi, \\ 
\dot{\xi} = -V_0'(x) \sim x,
\end{cases}
\]
so that the stable/unstable manifolds for the flow are transversal at
the critical point $(0,0)$.

The following result as stated can be read off from \cite{Chr-NC,Chr-NC-erratum,Chr-QMNC}, and has also been
studied in slightly different contexts in \cite{CdVP-I,CdVP-II} and
\cite{BuZw-bb}, amongst many others.  We only pause briefly to remark
that, since the lower bound on the operator $\tQ$ is of the order
$h/\log(1/h) \gg h^2$, the same result applies equally well to $\tQ +
h^2 V_1$.

\begin{proposition}
\label{P:ml-inv-0}
Suppose $x = 0$ is a nondegenerate local maximum of the potential $V_0$,
$V_0(0) = 1$.  For 
$\epsilon>0$ sufficiently small, let $\phi \in \s(T^* \reals)$
have compact support in $\{ |(x,\xi) |\leq \epsilon\}$.  Then there
exists $C_\epsilon>0$ such that 
\begin{equation}
\label{E:ml-inv-0}
\| \tQ \phi^w u \| \geq C_\epsilon \frac{h}{\log(1/h)} \|
\phi^w u \|, \,\,\, z \in [1-\epsilon, 1 + \epsilon].
\end{equation}
\end{proposition}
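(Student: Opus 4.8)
The plan is to establish the estimate \eqref{E:ml-inv-0} by the now-standard positive commutator / escape-function argument adapted to a nondegenerate hyperbolic fixed point, in the semiclassical setting with the logarithmic loss. After a symplectic change of coordinates (and a corresponding conjugation by a metaplectic/FIO), I would first reduce to the model situation in which the symbol of $\tQ$ near $(0,0)$ is $q = \xi^2 - x^2 - (z-1) + \mathcal{O}(|(x,\xi)|^3)$; since $V_0''(0)<0$, the quadratic part has the saddle form $\xi^2 - c x^2$ with $c>0$, and after rescaling $x$ we may take $c=1$. The linearized Hamilton flow is then hyperbolic with stable/unstable manifolds $\{\xi = \pm x\}$ crossing transversally at the origin, as recorded in the statement.

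Next I would construct an escape function $G \in \Ci_c(T^*\reals)$, supported in $\{|(x,\xi)|\le 2\epsilon\}$ and equal to the generator of dilations $G_0 = \frac12(x\xi + \xi x)$ (symbol $x\xi$) near the origin, so that the Poisson bracket satisfies $\{q_0, G\} = 2(\xi^2 + x^2) \ge c_0 |(x,\xi)|^2$ on a neighborhood of $(0,0)$, with a controlled sign elsewhere on $\supp \phi$. The logarithmic factor enters by weighting: one works with $G_\delta = \log(1/h)^{-1}\, G$ composed with a cutoff to the region $\{ |(x,\xi)| \ge M h^{1/2}\}$, or equivalently uses the weight $\langle (x,\xi)/\sqrt h\rangle$; this is exactly the mechanism in \cite{Chr-NC}. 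Quantizing, one gets
\[
\frac{i}{h}[\tQ, G^w] = (\{q,G\})^w + \mathcal{O}_{L^2\to L^2}(h),
\]
and the sharp Gårding inequality applied to $\{q,G\} \gtrsim |(x,\xi)|^2$ on the relevant region yields a lower bound of the form $\langle \tfrac{i}{h}[\tQ,G^w]\phi^w u, \phi^w u\rangle \ge \frac{C}{\log(1/h)} \|\phi^w u\|^2 - (\text{errors supported where } |(x,\xi)|\lesssim \sqrt h \text{ or outside } \supp\phi)$. Pairing instead with $\tQ\phi^w u$ and using Cauchy–Schwarz, $|\langle [\tQ, G^w]\phi^w u, \phi^w u\rangle| \le 2\|G^w\|\,\|\tQ \phi^w u\|\,\|\phi^w u\| \le C\log(1/h)^{-1}\cdot\log(1/h)\,\|\tQ\phi^w u\|\,\|\phi^w u\|$ — wait, more carefully: $\|G^w\| = \mathcal{O}(\log(1/h)^{-1})$, so one obtains $\frac{h}{\log(1/h)}\|\phi^w u\|^2 \lesssim h\,\|G^w\|\,\|\tQ\phi^w u\|\,\|\phi^w u\| + (\text{errors})$, and rearranging gives $\|\tQ\phi^w u\| \gtrsim \frac{h}{\log(1/h)}\|\phi^w u\|$ once the error terms are absorbed.

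The main obstacle — and the reason one cannot simply cite a black box — is controlling the error terms coming from (i) the region near the fixed point where $|(x,\xi)| \lesssim \sqrt h$, on which the commutator is too small and which is responsible for precisely the $\log(1/h)$ loss rather than a clean $h$, and (ii) the boundary of $\supp \phi$, where $\phi^w u$ need not vanish; here one uses that $\phi$ has compact support in $\{|(x,\xi)|\le\epsilon\}$ together with an auxiliary cutoff $\tilde\phi = 1$ on $\supp\phi$ to write $\phi^w u = \tilde\phi^w \phi^w u + \mathcal{O}(h^\infty)$ and propagate the estimate along the flow out to where $|\xi|$ or $|x|$ is bounded below (the "escape" region), absorbing those contributions into $\|\tQ\phi^w u\|$ by elliptic regularity away from $p=0$. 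The remark in the statement — that the bound $h/\log(1/h) \gg h^2$ means $h^2 V_1$ is harmless — is handled at the very end: replacing $\tQ$ by $\tQ + h^2 V_1^w$ changes the left side by at most $\mathcal{O}(h^2)\|\phi^w u\| = o(h/\log(1/h))\|\phi^w u\|$, which is absorbed. I would also note that the restriction $z\in[1-\epsilon,1+\epsilon]$ ensures the energy surface $\{q=0\}$ stays in the region where the saddle structure is valid, and $\epsilon$ is chosen small enough that $V_0$ has no other critical points and the cubic remainder is dominated by the quadratic part on $\supp\phi$.
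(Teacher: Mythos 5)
You should first note what the paper actually does here: it does not prove Proposition \ref{P:ml-inv-0} at all, but reads it off from \cite{Chr-NC,Chr-NC-erratum,Chr-QMNC} (see also \cite{CdVP-I,CdVP-II,BuZw-bb}), adding only the remark that $h/\log(1/h)\gg h^2$ lets one absorb the subpotential $h^2V_1$ --- a point you handle correctly and in the same way at the end of your sketch. Your commutator/escape-function strategy with $G\sim x\xi$ near the saddle is indeed the right family of argument (it is the skeleton of the cited proofs, and of the degenerate analogues proved in Subsections \ref{SS:unst-inf}--\ref{SS:inf-deg-infl}), and your bracket computation $\{\xi^2-x^2,\,x\xi\}=2(x^2+\xi^2)$ is correct.

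The genuine gap is that the step which is supposed to produce the factor $\log(1/h)$ --- and which is the entire difficulty --- is not actually carried out, and the version you wrote does not work. Rescaling the commutant to $G_\delta=\log(1/h)^{-1}G$ is vacuous: the same constant multiplies both the commutator lower bound and the Cauchy--Schwarz upper bound $|\langle[\tQ,G_\delta^w]\phi^wu,\phi^wu\rangle|\leq Ch^{-1}\cdot h\|G_\delta^w\|\,\|\tQ\phi^wu\|\,\|\phi^wu\|$, so it cancels; indeed, taken at face value your chain of inequalities would yield $\|\tQ\phi^wu\|\gtrsim h\|\phi^wu\|$, which is \emph{stronger} than the known-sharp bound \eqref{E:ml-inv-0} and therefore signals that the bookkeeping is wrong. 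The real obstruction sits in the region $|(x,\xi)|\lesssim h^{1/2}$: there $\{q,G\}$ vanishes quadratically, a cutoff at scale $Mh^{1/2}$ places the symbol in the marginal $S_{1/2}$ class where the sharp G\aa rding remainder $\O(h)\|\phi^wu\|^2$ is of the same size as the main term, and the $L^2$ mass of $\phi^wu$ inside the $h^{1/2}$-ball (which is exactly where a quasimode at a saddle concentrates) is left completely uncontrolled by your inequality; you acknowledge this as ``obstacle (i)'' but then dispose of it by asserting it is ``exactly the mechanism in \cite{Chr-NC}'' without supplying it. To close the argument one needs either the two-parameter $(h,\tilde h)$ second microlocalization with a singular commutant of the type $\Lambda(X)\Lambda(\Xi)$ quantized in the $\tilde h$-calculus (as in \cite{SjZw-frac,Chr-NC,ChWu-lsm}, and as this paper does explicitly in the proofs of Propositions \ref{P:ml-inv-3a}--\ref{P:ml-inv-3b}, where the lower bound on the quantization of $X^2+\Xi^2$-type symbols comes from \cite[Lemma A.2]{ChWu-lsm}), or the exact saddle normal form and model (parabolic-cylinder/monodromy) analysis of \cite{CdVP-I,CdVP-II}; it is from one of these inputs that the logarithm actually emerges, and neither appears in your proposal.
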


\subsection{Unstable finitely degenerate trapping}

\label{SS:unst-fin}

In this subsection, we consider an isolated critical point leading to
unstable but finitely degenerate trapping.  That is, we now assume
that $x = 0$ is a degenerate maximum for the function $V_0(x) = A^{-2}(x)$ of order
$m \geq 2$.  If we again assume $V_0(0) = 1$, then this means that near
$x = 0$, $V_0(x) \sim 1 - x^{2m}$.  Critical points of this form were
studied in \cite{ChWu-lsm}, but the proof can also be more or less
deduced from the proofs of Propositions \ref{P:ml-inv-3a} and
\ref{P:ml-inv-3b} below.
We only
remark briefly that again, since the lower bound on the operator $\tQ$
is of the order $h^{2m/(m+1)} \gg h^2$, the estimate applies equally
well to $\tQ + h^2 V_1$.

\begin{proposition}
\label{P:ml-inv-1}
Let $\tQ = -h^2 \p_x^2 + V_0(x) -z$.  For $\epsilon>0$ sufficiently small, let $\phi \in \s(T^* \reals)$
have compact support in $\{ |(x,\xi) |\leq \epsilon\}$.  Then there
exists $C_\epsilon>0$ such that 
\begin{equation}
\label{E:ml-inv-1}
\| \tQ \phi^w u \| \geq C_\epsilon h^{2m/(m+1)} \|
\phi^w u \|, \,\,\, z \in [1-\epsilon, 1 + \epsilon].
\end{equation}
\end{proposition}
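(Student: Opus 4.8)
The plan is to reduce \eqref{E:ml-inv-1} to a semiclassical lower bound obtained by a rescaling (blowup) adapted to the degeneracy $V_0(x) \sim 1 - x^{2m}$ near $x=0$, followed by a positive-commutator / energy argument in the rescaled variables. First I would normalize: since $V_0(0)=1$, write $\tQ = -h^2\p_x^2 + V_0(x) - z$ with $z \in [1-\epsilon,1+\epsilon]$, and set $w = z-1$, $|w| \leq \epsilon$. On the support of $\phi^w u$ the relevant region of phase space is $\{|(x,\xi)| \leq \epsilon\}$, where $V_0(x) = 1 - c\,x^{2m} + O(x^{2m+1})$ with $c>0$; absorbing constants, we may treat $V_0(x) = 1 - x^{2m}$ up to lower-order terms, and by Lemma~\ref{l:err} and \eqref{eq:abc} the errors from this replacement contribute symbols that are $O(h)$ relative to the leading operator and hence negligible against the claimed bound $h^{2m/(m+1)}$. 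The natural anisotropic rescaling is $x = h^{1/(m+1)} X$, $\xi = h^{m/(m+1)}\Xi$, under which $-h^2\p_x^2 + (1 - x^{2m}) - z$ becomes $h^{2m/(m+1)}\big(-\p_X^2 - X^{2m}\big) - w$, up to a unitary conjugation of the type $T_{h,\tilde h}$ in \eqref{rescaledquantization}. This identifies the correct power $h^{2m/(m+1)}$ and reduces the matter to: the model operator $L = -\p_X^2 - X^{2m}$ satisfies $\|(L - \mu)v\| \geq C\|v\|$ uniformly for $\mu$ in a bounded interval and $v$ microlocalized to a bounded region of $T^*\reals$, after a further (second-parameter $\tilde h$) localization to handle the noncompactness of the rescaled region.

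The key step is then the lower bound for the model operator, which I would prove by a positive commutator argument with an escape function. The Hamiltonian $\ell = \Xi^2 - X^{2m}$ has $(0,0)$ as a hyperbolic-type critical point (with a degenerate radial direction of order $m$), and $X\Xi$ is a natural escape function: one checks $\{\ell, X\Xi\} = 2\Xi^2 + 2m X^{2m}$, which is $\geq c(\Xi^2 + X^{2m}) \geq c'(|X|^{2} + |\Xi|^{2})^{\min(1,m)/\dots}$ — more precisely it is positive definite in the sense of vanishing to finite order at the origin and bounded below by a positive elliptic-in-the-anisotropic-sense quantity on the rescaled region. Quantizing $B = \tfrac12(X\Xi + \Xi X)^w$ and computing $\tfrac{i}{h_{\mathrm{eff}}}[L, B]$ (with $h_{\mathrm{eff}}$ the residual semiclassical parameter $\tilde h/h$ or similar from the two-parameter calculus), one gets a positive operator modulo lower order, so that
\begin{equation*}
\|(L-\mu)v\|\,\|Bv\| \;\gtrsim\; \langle [L,B] v, v\rangle \;\gtrsim\; \|v\|^2 - (\text{lower order}),
\end{equation*}
and since $B$ is bounded on the microlocalized region this yields $\|(L-\mu)v\| \gtrsim \|v\|$. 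Undoing the rescaling via \eqref{rescaledquantization} and transplanting back with the cutoff $\phi^w$ gives \eqref{E:ml-inv-1}; the $z$-dependence enters only through the bounded shift $w$, which the commutator argument accommodates because $[L,B]=[L-\mu,B]$.

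The main obstacle I expect is the bookkeeping in the two-parameter calculus: after rescaling, the operator lives in a $\s_{\alpha,\beta}$ class with $\alpha = 1/(m+1)$, $\beta = m/(m+1)$ (so $\alpha+\beta=1$, the marginal inhomogeneous case treated in \cite{ChWu-lsm}), and one must verify that all error terms in the symbol expansion \eqref{eq:abc} — in particular those from the commutator $[L,B]$ and from the non-model part of $V_0$ — are genuinely of lower order than the principal positive term, uniformly in $h$ and $\tilde h$. This is exactly the place where the marginality of the calculus is delicate: gains come in $\tilde h$ and in $\langle\xi\rangle$ but not in $h$, so the escape-function construction and the choice of the second localization must be arranged so that the leftover errors are $o(1)$ relative to $\|v\|^2$. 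Controlling the noncompactness of the blown-up region (the cutoff $\phi$ has fixed support in $(x,\xi)$, which becomes a large region of size $h^{-1/(m+1)}\times h^{-m/(m+1)}$ in $(X,\Xi)$) requires an additional elliptic estimate away from the rescaled origin, where $\ell - \mu$ is elliptic in the anisotropic class and the standard elliptic parametrix of the calculus applies. As remarked in the text, since $h^{2m/(m+1)} \gg h^2$, the subprincipal term $h^2 V_1$ is automatically absorbed and requires no extra work.
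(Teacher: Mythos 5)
Your reduction to the blown-up variables and your identification of the exponent $2m/(m+1)$ are right, and a commutator argument is indeed the engine of the actual proof (the paper defers to \cite{ChWu-lsm}, whose argument is reproduced in the proofs of Propositions \ref{P:ml-inv-3a} and \ref{P:ml-inv-3b}). But two linked steps in your plan fail as stated. First, the outer region is \emph{not} elliptic: for $z$ near $1$ the characteristic set $\{\xi^2+V_0(x)=z\}$, i.e.\ $\{\Xi^2 - X^{2m}=\mu\}$ after blowup, is noncompact in $(X,\Xi)$ and runs along the separatrices through the entire support of $\phi$ (this is exactly the trapping), so there is no ``additional elliptic estimate away from the rescaled origin'' in the anisotropic class; what is needed there is a propagation/escape estimate. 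Second, your escape function $X\Xi$ is unbounded on the region you actually must cover: since $\phi$ has fixed support in $(x,\xi)$, the blown-up region has size $h^{-1/(m+1)}\times h^{-m/(m+1)}$, so $\|Bv\|\lesssim h^{-1}\|v\|$ and the chain $\|(L-\mu)v\|\,\|Bv\|\gtrsim\|v\|^2$ only yields $\|\tQ\phi^w u\|\gtrsim h^{2m/(m+1)+1}\|\phi^w u\|$, far weaker than \eqref{E:ml-inv-1}. If instead you truncate $B$ to a fixed bounded $(X,\Xi)$-region to keep it bounded, the cutoff errors and the leftover annulus land precisely in the non-elliptic outer region of the first point, and your argument has no tool to close there.

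The paper's (i.e.\ \cite{ChWu-lsm}'s) resolution of exactly this tension is the structural point you are missing: one uses a \emph{bounded, saturating} singular commutant $a=\Lambda(\Xi)\Lambda(X)\chi(x)\chi(\xi)$ with $\Lambda(s)=\int_0^s\lll z\rrr^{-1-\ep_0}dz$ (a smoothed sign function in the blown-up variables), quantized in the two-parameter calculus. Boundedness of $a$ gives $|\lll [\tQ,a^w]v,v\rrr|\leq C\|\tQ v\|\,\|v\|$ with no loss, while the bracket $\hamvf(a)\sim \Lambda'(\Xi)\Xi\ang{X}^{-1-\ep_0}-\Lambda(X)V_0'\ang{\Xi}^{-1-\ep_0}$ is everywhere nonnegative on $\supp\phi$ (positivity in the outer region coming from $-\Lambda(X)V_0'\geq 0$, i.e.\ from the sign of $V_0'$, not from ellipticity), but only \emph{degenerately} positive near the origin; the sharp factor $h^{2m/(m+1)}$ then comes from a lower bound on the $\th$-quantization of this degenerate bracket (an anharmonic-oscillator-type bound, \cite[Lemma A.2]{ChWu-lsm}), not from a uniformly positive commutator as in your model step. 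Your observation that $h^2V_1$ is harmless because $h^{2m/(m+1)}\gg h^2$ is correct and is exactly the remark made in the paper.
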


\begin{remark}
In \cite{ChWu-lsm}, it is also shown that this estimate is sharp in
the sense that the exponent $2m/(m+1)$ cannot be improved.

\end{remark}

\subsection{Finitely degenerate inflection transmission trapping}

We next study the case when the potential has an inflection point of
finitely degenerate type.  That is, let us assume the point $x = 1$ is
a finitely degenerate inflection point, so that locally near $x = 1$,
the potential $V_0(x) = A^{-2}(x)$ takes the form
\[
V_0(x) \sim C_1^{-1} -c_2(x-1)^{2m_2 + 1}, \,\, m_2 \geq 1
\]
where $C_1>1$ and $c_2>0$.  Of course the constants are arbitrary
(chosen to agree with those in \cite{ChMe-lsm}), and $c_2$ could be
negative without changing much of the analysis.  This Proposition and the
proof are in \cite{ChMe-lsm}, and as we will once again
revisit the proof of this Proposition in Subsection \ref{SS:inf-deg-infl},
we will omit it at this point.  But one last time, let us observe that
since the lower bound on the operator $\tQ$ is of the order $h^{(4m_2 +
2)/(2m_2+3)} \gg h^2$, the estimate applies equally well to the
operator $\tQ + h^2 V_1$.

\begin{proposition}
\label{P:ml-inv-2}
For $\epsilon>0$ sufficiently small, let $\phi \in \s(T^* \reals)$
have compact support in $\{ |(x-1,\xi) |\leq \epsilon\}$.  Then there
exists $C_\epsilon>0$ such that 
\begin{equation}
\label{E:ml-inv-2}
\| \tQ \phi^w u \| \geq C_\epsilon
h^{(4m_2+2)/(2m_2+3)}  \|
\phi^w u \|, \,\,\, z \in [C_1^{-1}-\epsilon, C_1^{-1} + \epsilon].
\end{equation}
\end{proposition}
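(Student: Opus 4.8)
The plan is to treat Proposition \ref{P:ml-inv-2} as a model-case semiclassical lower bound for the one-dimensional Schr\"odinger operator $\tQ = -h^2\p_x^2 + V_0(x) - z$ near a finitely degenerate inflection point, proved by a positive-commutator / escape-function argument rescaled (blown up) to the relevant scales. First I would localize: by the compact support of $\phi$ in $\{|(x-1,\xi)|\le\epsilon\}$ and the form $V_0(x)\sim C_1^{-1} - c_2(x-1)^{2m_2+1}$, after translating $x\mapsto x-1$ and subtracting the constant $C_1^{-1}$ I reduce to $p_0(x,\xi) = \xi^2 - c_2 x^{2m_2+1} - (z - C_1^{-1})$ with $|z-C_1^{-1}|\le\epsilon$. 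On the support of $\phi$ the only obstruction to ellipticity is a neighborhood of the point $(0,0)$ in phase space, so it suffices to prove the estimate with $\phi^w u$ replaced by $\psi^w u$ for $\psi$ supported in a tiny neighborhood of the origin; away from there one has $|p_0|\gtrsim\epsilon$ and the elliptic estimate gives a far better bound than $h^{(4m_2+2)/(2m_2+3)}$.

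The heart of the argument is the choice of scales. The natural anisotropic rescaling is $x = h^{2/(2m_2+3)} X$, $\xi = h^{(2m_2+1)/(2m_2+3)}\Xi$, which makes both $\xi^2$ and $c_2 x^{2m_2+1}$ of size $h^{(4m_2+2)/(2m_2+3)}$; dividing $\tQ$ by this common size turns it into an $\mathcal{O}(1)$ operator $\tilde P = -H^2\p_X^2 + \text{(polynomial in }X) - \zeta$ with a new effective semiclassical parameter $H = h^{2/(2m_2+3)}\to 0$, and one reads off the estimate $\|\tQ\psi^w u\|\gtrsim h^{(4m_2+2)/(2m_2+3)}\|\psi^w u\|$ provided one has a \emph{uniform} (in $\zeta$ in a compact set) lower bound $\|\tilde P v\|\ge c\|v\|$ for the rescaled operator on appropriately localized $v$. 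I would establish that uniform bound by constructing an escape function $G(X,\Xi)$ whose Poisson bracket with the rescaled principal symbol $\tilde p_0 = \Xi^2 - c_2 X^{2m_2+1} - \zeta$ is strictly positive on the characteristic set near the origin; because the flow near an inflection point of this type is ``unstable'' in the transmission sense (the trajectory passes through, there is no returning trajectory), such a monotone $G$ exists — this is the reduced-phase-space analogue of the separatrix picture described in the introduction, and it is the same mechanism behind Propositions \ref{P:ml-inv-0} and \ref{P:ml-inv-1}. The positive commutator $\frac{i}{H}[\tilde P, G^w] \ge c\, \text{Id} + (\text{elliptic remainder}) + \mathcal{O}(H^{\text{gain}})$ then yields $\|\tilde P v\|\,\|G^w v\| \gtrsim \|v\|^2$ and hence the claim, after handling the boundary/cutoff terms with the elliptic estimate away from the origin.

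I expect the main obstacle to be the sharp bookkeeping of the error terms in the commutator after rescaling: because the model potential $x^{2m_2+1}$ is only a leading-order approximation to $V_0$ (with a $\GG^0_\tau$-controlled, here actually smooth, remainder) and because the sub-potential $h^2V_1$ must be absorbed, I need the commutator gain to dominate the remainder $z - C_1^{-1}$ window \emph{and} the $h^2V_1$ term. The remark in the excerpt that ``the lower bound on the operator $\tQ$ is of the order $h^{(4m_2+2)/(2m_2+3)} \gg h^2$'' is exactly the statement that $h^2V_1$ is negligible, so once the clean $\tQ$-estimate is in hand the full operator $\tQ + h^2V_1$ follows by the triangle inequality, since $\|h^2 V_1 \psi^w u\| = o(h^{(4m_2+2)/(2m_2+3)})\|\psi^w u\|$. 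The other delicate point — getting uniformity over the $z$-interval $[C_1^{-1}-\epsilon, C_1^{-1}+\epsilon]$ — is handled because after rescaling the parameter $\zeta = h^{-(4m_2+2)/(2m_2+3)}(z - C_1^{-1})$ can be \emph{large}, so one must either restrict $z$ to an $h^{(4m_2+2)/(2m_2+3)}$-window (the genuinely trapped regime, where the escape function argument applies directly) or, for $z$ outside that window, observe that the characteristic set has moved away from the degenerate point and a simpler non-degenerate or elliptic estimate gives an even stronger bound; patching these two regimes with a partition of unity in $z$ completes the proof. Since this is precisely the argument of \cite{ChMe-lsm}, I would cite it for the details and only reproduce the rescaling and escape-function skeleton, as the excerpt indicates the full proof is revisited in Subsection \ref{SS:inf-deg-infl}.
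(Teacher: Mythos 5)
Your overall strategy---anisotropic blowup at the scales $x-1\sim h^{2/(2m_2+3)}$, $\xi\sim h^{(2m_2+1)/(2m_2+3)}$, a positive-commutator/escape-function argument for the blown-up operator, and absorption of $h^2V_1$ at the end---is the same mechanism used for this Proposition in \cite{ChMe-lsm} (the paper itself defers to that reference and revisits the argument in Subsection \ref{SS:inf-deg-infl}). But there is a genuine gap in the engine of your argument: the scaling you propose is \emph{marginal}. Conjugating $-h^2\p_x^2-c_2(x-1)^{2m_2+1}-(z-C_1^{-1})$ by the unitary rescaling $x-1=h^{2/(2m_2+3)}X$ and dividing by the common size $h^{(4m_2+2)/(2m_2+3)}$ yields $-\p_X^2-c_2X^{2m_2+1}-\zeta$ with \emph{no} residual small parameter: since $2/(2m_2+3)+(2m_2+1)/(2m_2+3)=1$, all of $h$ is consumed, and there is no ``new effective semiclassical parameter $H=h^{2/(2m_2+3)}$.'' Consequently the commutator remainders in your scheme are not $\mathcal{O}(H^{\mathrm{gain}})$; there is nothing small left to beat them with. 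This is precisely why the paper works in the two-parameter calculus: one blows up by $(h/\th)^{\alpha}$, $(h/\th)^{\beta}$ with $\alpha+\beta=1$ and quantizes in the $\th$-calculus, so that $\th$ (frozen small only at the very end) is the surviving parameter controlling the symbolic expansion and the error terms (Lemma \ref{l:err}, and the error Lemmas \ref{L:Q-comm-error-3a}, \ref{L:Q-comm-error-4a} in the analogous proofs).

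A second, related problem is the size of the localization region. After your full rescaling, the microsupport of $\phi^w u$ is not a fixed neighbourhood of the origin but a set of size $\epsilon h^{-2/(2m_2+3)}\times \epsilon h^{-(2m_2+1)/(2m_2+3)}$, and the characteristic variety is a cusp-like curve through $(1,0)$ rather than an isolated point; so positivity of $\{G,\tilde p_0\}$ ``near the origin'' together with ellipticity does not cover the support of the cutoff, and the part of the characteristic set away from $(1,0)$ cannot be discarded as elliptic. The paper's commutant is built exactly to cope with this: $a=\Lambda_1(\Xi)\Lambda_2(X-1)\chi(x-1)\chi(\xi)$, where the $\lll\cdot\rrr^{-1-\epsilon_0}$ weights make $\hamvf(a)$ nonnegative, with quantitative lower bounds region by region, on the \emph{entire} blown-up support, while the cutoffs $\chi$ live at a fixed scale $\delta_1$ so that their derivative terms miss $\supp\phi$ altogether. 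In that scheme your partition in $z$ is unnecessary: the bracket is $z$-independent, and $z$ enters only through $|\lll[Q_1-z,a^w]v,v\rrr|\le C\|(Q_1-z)v\|\,\|v\|$, giving uniformity over the whole interval $[C_1^{-1}-\epsilon,C_1^{-1}+\epsilon]$. Your final step is fine: since $h^{(4m_2+2)/(2m_2+3)}\gg h^2$, the sub-potential $h^2V_1$ is indeed absorbed by the triangle inequality, exactly as the paper remarks.
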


\begin{remark}
We remark that in this case, \cite{ChMe-lsm} shows once again that
this estimate is sharp in the sense that the exponent
$(4m_2+2)/(2m_2+3)$ cannot be improved.

\end{remark}

\subsection{Unstable infinitely degenerate and cylindrical trapping}
\label{SS:unst-inf}

In this subsection, we study the case where the principal part of the
potential $V(x) = A^{-2}(x) + h^2 V_1(x)$ has
an infinitely degenerate maximum, say, at the point $x = 0$.  Let
$V_0(x) = A^{-2}(x)$.  As
usual, we again assume that $V_0(0) = 1$, so that
\[
V_0(x) = 1 - \O(x^\infty)
\]
in a neighbourhood of $x = 0$.  Of course this is not very precise, as
$V_0$ could be constant in a neighbourhood of $x = 0$ and still satisfy
this, and the proof must be modified to suit these two cases.  So let us first assume that $V_0(0) = 1$, and
$V_0'(x)$ vanishes to infinite order at $x = 0$, however, $\pm V_0'(x) <0$
for $\pm x >0$.  That is, the critical point at $x = 0$ is infinitely
degenerate but isolated.

Our microlocal spectral theory result is then that the microlocal
cutoff resolvent is bounded by $\O_\eta (h^{-2-\eta})$ for any $\eta>0$.  In order to state the
result,  let 
\[
\tQ = -h^2 \p_x^2 + V(x) -z = -h^2 \p_x^2 + A^{-2}(x) + h^2 V_1(x) -z.
\]

\begin{proposition}
\label{P:ml-inv-3a}
For $\epsilon>0$ sufficiently small, let $\phi \in \s(T^* \reals)$
have compact support in $\{ |(x,\xi) |\leq \epsilon\}$.  Then for any
$\eta>0$, there
exists $C_{\epsilon, \eta}>0$ such that 
\begin{equation}
\label{E:ml-inv-3a}
\| \tQ \phi^w u \| \geq C_{\epsilon, \eta} h^{2+ \eta} \|
\phi^w u \|, \,\,\, z \in [1-\epsilon, 1 + \epsilon].
\end{equation}
\end{proposition}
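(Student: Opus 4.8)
\emph{Reduction to a neighbourhood of the origin.} The plan is to prove \eqref{E:ml-inv-3a} by a positive commutator estimate carried out \emph{after} an inhomogeneous blowup of phase space at the critical point $(0,0)$, using the two-parameter calculus $\Psi_{\alpha,\beta}$ with $\alpha+\beta=1$. First I would dispose of the region away from the origin: on $\{\epsilon/2\leq|(x,\xi)|\leq\epsilon\}$ we have $V_0'(x)\neq 0$, so the Hamilton flow of $p_0=\xi^2+V_0(x)$ is non-trapping there, and a standard one-parameter escape function gives $\|\tQ\psi^w u\|\geq c\,h\,\|\psi^w u\|$ for any $\psi$ supported in that shell. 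Moreover, $V_0'(x)=-2A^{-3}(x)A'(x)$ vanishing to infinite order at $x=0$ forces $A'$ and $A''$, hence also $V_1=\frac{n-1}{2}A''A^{-1}-\frac{(n-1)(n-3)}{4}(A')^2A^{-2}$, to vanish to infinite order at $x=0$; thus on the shell $h^2V_1=\O(h^2)$ is negligible, and in the remaining small neighbourhood of $(0,0)$ one has $|h^2V_1(x)|\leq C_N h^2|x|^N$. So the entire content of the Proposition is the weak bound, with loss $h^{-2-\eta}$, for $u$ microlocalized in a fixed small neighbourhood of $(0,0)$, where we may freely replace $\tQ$ by $\tQ_0=-h^2\p_x^2+V_0-z$.

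\emph{Blowup and escape function.} Near $(0,0)$ the set $\{p_0=z\}$ with $z\in[1-\epsilon,1+\epsilon]$ is $\{\xi^2=z-V_0(x)\}$; since $V_0(0)=1$, $V_0'$ vanishes to infinite order, and $\pm V_0'<0$ for $\pm x>0$, the trapped set at energy $1$ is the single point $(0,0)$, with stable and unstable manifolds tangent to $\{\xi=0\}$ to infinite order. I would pass, via the unitary rescaling $T_{h,\th}$ of \eqref{rescaledquantization} and the blowdown map $\B$ of \eqref{blowdown}, to coordinates $(x,\xi)=\B(X,\Xi)=(\csh^\alpha X,\csh^\beta\Xi)$, choosing $\alpha$, $\beta$ and the second parameter $\th$ so that, up to an overall scalar factor, $\tQ$ becomes a genuine semiclassical Schr\"odinger operator whose potential well has been partially spread out. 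In the $(X,\Xi)$ picture one builds an escape function $G=G(X,\Xi)$ of the form, roughly, $G=\chi(X,\Xi)\,g(\Xi/W(X))$ with $W(X)^2\sim z-V_0$ in the rescaled variable and $g$ bounded and strictly increasing (e.g.\ $g=\arctan$), engineered so that $\{p_0,G\}\geq c\,\mu(h)\,\chi_0(X,\Xi)$ plus terms supported where the flow already escapes, $\chi_0$ elliptic at the origin and $\mu(h)>0$ small. This is exactly where the hypothesis that $V_0$ lies in a 0-Gevrey class $\GG^0_\tau$ enters: taking $s=0$ in the definition of $\GG^0_\tau$ gives, near $x=0$, $|\partial_x^k V_0(x)|\leq C(k!)^C|x|^{-\tau k}\,|V_0(x)-1|$, and this polynomial control on \emph{all} derivatives of $V_0$ (hence of $W$, hence of $G$) is what is needed to check that $G\in\s_{\alpha,\beta}$, i.e.\ that $|\partial_X^\rho\partial_\Xi^\gamma G|\leq C_{\rho\gamma}\hsc^{\alpha|\rho|+\beta|\gamma|}\langle\Xi\rangle^{-|\gamma|}$, for a suitable choice of $\alpha$, $\beta$, and $\th$.

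\emph{The commutator estimate.} With $v=\phi^w u$, the identity
\[
2\,\Im\langle\tQ v,G^w v\rangle=\left\langle\frac{i}{h}[\tQ,G^w]\,(hv),\,v\right\rangle,
\]
combined with the symbolic expansion $\frac{i}{h}[\tQ,G^w]=(\{p_0,G\})^w+E^w$ furnished by Lemma \ref{l:err} and \eqref{eq:abc} (the remainder $E$ carrying factors $h^{N+1}\max\{\hsc^{(N+1)\alpha},\hsc^{(N+1)\beta}\}$) and the sharp G{\aa}rding inequality for $(\{p_0,G\})^w$, yields
\[
\|G^w\|\,\|\tQ v\|\geq c\,h\,\mu(h)\,\|v\|-\bigl(\text{calculus remainder}+h^2V_1\ \text{term}\bigr)\|v\|.
\]
One then arranges that the main term $c\,h\,\mu(h)\|v\|$ dominates, which forces a subpolynomial choice of $\th/h$ --- say $\th/h=(\log(1/h))^K$, equivalently $\th=h^{1-\delta}$ with $\delta=\delta(\eta)$ small. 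With this choice $\|G^w\|=\O(\hsc^{C})$ and $\mu(h)^{-1}$ together cost at most a factor $h^{-\eta}$ relative to the model bound $h^{-2}$, giving $\|\tQ\phi^w u\|\geq C_{\epsilon,\eta}\,h^{2+\eta}\,\|\phi^w u\|$; a microlocal partition of unity glues this to the non-trapping exterior estimate of the first paragraph.

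\emph{Main obstacle.} The crux is the construction of $G$ in the infinitely degenerate regime and the verification that, after the blowdown $\B$, it genuinely belongs to $\s_{\alpha,\beta}$: one must control arbitrarily many derivatives of the $\arctan$-type function built from $V_0$ and $V_0'$ --- both vanishing to infinite order at $x=0$ --- and show that the unavoidable loss in the $(\th/h)$-orders can be reabsorbed into the choice of $\th$, contributing only $h^{-\eta}$ to the final estimate. This is precisely the place where the finiteness of the 0-Gevrey order $\tau$ is indispensable: a flatter, non-0-Gevrey maximum (such as the $\exp(-\exp(1/x))$ example in the introduction) would ruin the symbol estimates. It may in fact be cleanest to iterate the blowup over a 0-Gevrey-adapted dyadic decomposition of a neighbourhood of $x=0$, in each shell of which the trapping is effectively \emph{finitely} degenerate --- so a version of Proposition \ref{P:ml-inv-1} applies --- and then to sum the contributions with $\tau$-dependent control of the constants. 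A secondary difficulty is keeping all estimates uniform in $z\in[1-\epsilon,1+\epsilon]$.
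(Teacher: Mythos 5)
There is a genuine gap, and it is at the heart of the matter: your main construction cannot produce any positivity at the trapped set. For $p_0=\xi^2+V_0$ and any commutant of the form $G=g(\xi/W(x))$ with $W^2=z-V_0$, a direct computation gives
\[
\hamvf\, G \;=\; g'\!\left(\tfrac{\xi}{W}\right)\frac{V_0'}{W^3}\,\bigl(\xi^2+V_0-z\bigr),
\]
i.e.\ the bracket is a multiple of $p_0-z$ and vanishes identically on the characteristic set (indeed $\xi/W\equiv\pm1$ there, so $G$ is locally constant along the flow); it is also undefined where $V_0\geq z$. So no choice of $g$, of the blowup exponents, or of $\th$ extracts a lower bound $c\,\mu(h)$ from this $G$. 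More fundamentally, even with a commutant of the correct ``$\sgn(x)\sgn(\xi)$'' type (the paper uses $a=\Lambda(\Xi)\Lambda(X)\chi\chi$), the potential contribution to the bracket is controlled by $-x V_0'(x)$, which for an infinitely degenerate maximum is $\O(|x|^\infty)$: in the core region $|x|\lesssim h^{\delta}$ (any $\delta$) it is smaller than every power of $h$, so no rescaling alone yields a polynomial lower bound there. The paper's key idea, absent from your proposal, is to perturb the potential by an $h$-dependent bump $W_h(x)=\Gamma(h)f(x/x_0)$ with a \emph{finitely} degenerate maximum of order $2m$: the point $x_0(h)$ is defined by $-xV_0'(x)\geq h/\varpi(h)$ for $x_0\leq|x|\leq\epsilon$, the bump supplies the missing positivity on $|x|\leq x_0$ (yielding, after blowup with $\alpha=1/(m+1)$ and fixed small $\th$, a lower bound $\sim\Gamma(h)x_0^{-3-\epsilon_0+\cdots}$), and $\Gamma(h)\sim h^{2+\eta}$, $\eta=\O(m^{-1})$, is exactly small enough that $\|(V_{0,h}-V_0)v\|\leq\Gamma\|v\|$ can be absorbed when passing back from $Q_1=\tQ+W_h$ to $\tQ$. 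Your fallback (dyadic shells of ``effectively finite degeneracy'' plus Proposition \ref{P:ml-inv-1}) runs into the same core region: the effective order is unbounded as the shells shrink, the constants of Proposition \ref{P:ml-inv-1} degrade with $m$, and the innermost region $|x|\leq x_0(h)$ (with $x_0\gg h^\delta$ for every $\delta$) admits no fixed finite-order model, besides the nontrivial gluing across shells.

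A second, smaller but real, error is the claim that $h^2V_1$ is negligible. A generic bounded perturbation of size $\O(h^2)$ is \emph{larger} than the lower bound $h^{2+\eta}$ being proved, so it cannot be discarded as an $L^\infty$ error; and the pointwise vanishing of $V_1$ at $x=0$ does not settle the issue, since what enters the commutator is $V_1'$, which must be compared with $V_0'$ (both flat at $0$) throughout the transition region $|x|\sim x_0(h)$. This is precisely where the paper invokes the 0-Gevrey hypothesis (Lemma \ref{L:g2g3} and the estimate of $V'''$ in Lemma \ref{L:Q-comm-error-3a}): it bounds higher derivatives of $A$ by lower ones with polynomial loss in $|x|$, giving $h^2|V_1'|\leq Ch^{2-s\tau\delta_2}|V_0'|$ on $|x|\geq x_1\gg h^{\delta_2}$ and a large power of $h$ on the complementary set. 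Your use of the 0-Gevrey class only to verify symbol estimates for $G$ misses this absorption step, which is the other place the hypothesis is indispensable.
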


\begin{remark}
As this is the limiting case as $m \to \infty$ of Proposition
\ref{P:ml-inv-1}, we believe the optimal lower bound in this case is $h^2/
\gamma(h)$ for some $\gamma(h) \to 0$.  This is further suggested by a
microlocal scaling
heuristic.  However, various attempts to
tighten up the argument to get the better lower bound seem to fail.
It would be very interesting to determine if a lower bound of $h^2 /
\gamma(h)$ or even $h^2$ holds.

\end{remark}

For our next result, we consider the case where there is a whole
cylinder of unstable trapping.  That is, we assume the principal part
of the effective potential
$V_0(x)$ has a maximum $V_0(x) \equiv 1$ on an interval, say $x \in
[-a,a]$, and that $\pm V_0'(x) <0$ for $\pm x > a$.  Our main result in
this case says that the microlocal cutoff resolvent is again 
controlled by $h^{-2- \eta}$ for any $\eta>0$.  Let us again set 
\[
\tQ = -h^2 \p_x^2 + V(x) -z.
\]

\begin{proposition}
\label{P:ml-inv-3b}
For $\epsilon>0$ sufficiently small, let $\phi \in \s(T^* \reals)$
have compact support in $\{ |x| \leq a + \epsilon,\, |\xi |\leq
\epsilon\}$.  Then for any $\eta>0$, there
exists $C_{\epsilon, \eta}>0$ such that 
\begin{equation}
\label{E:ml-inv-3b}
\| \tQ \phi^w u \| \geq C_{\epsilon, \eta} h^{2+ \eta}\|
\phi^w u \|, \,\,\, z \in [1-\epsilon, 1 + \epsilon].
\end{equation}
\end{proposition}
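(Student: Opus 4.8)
The proof is a positive commutator (escape function) estimate, carried out in the inhomogeneous two-parameter calculus $\Psi_{\alpha,\beta}$ and combined with the edge estimate of Proposition~\ref{P:ml-inv-3a}. The first step is a microlocal reduction. Since $\tQ=-h^2\p_x^2+V(x)-z$ is semiclassically elliptic on the support of any symbol supported away from the trapped set $K=\{\xi=0,\ |x|\le a\}$, and since at energies $z\in[1-\epsilon,1+\epsilon]$ the Hamilton flow of $p_0=\xi^2+V_0(x)$ carries every point of $\{|x|\le a+\epsilon,\ |\xi|\le\epsilon\}\setminus K$ out of that box in finite time (a trajectory must eventually reach $|x|>a$, where $\pm V_0'<0$ expels it to infinity), an elliptic parametrix together with a propagation argument shows the part of $\phi^w u$ microsupported away from $K$ obeys a much stronger estimate. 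It therefore suffices to prove \eqref{E:ml-inv-3b} for $v=\phi^w u$ with $\phi$ supported in an arbitrarily small neighbourhood of $K$.

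Next I would split with a microlocal partition of unity $1=\psi_++\psi_-+\psi_0$ on that neighbourhood, with $\psi_\pm$ supported near the endpoints $(\pm a,0)$ and $\psi_0$ supported in the flat part away from the endpoints. On $\supp\psi_\pm$ the potential $V_0=A^{-2}$ has a \emph{one-sided} infinitely degenerate maximum, and (the proof of) Proposition~\ref{P:ml-inv-3a} adapts to give $\|\tQ\psi_\pm^w v\|\ge C_\eta h^{2+\eta}\|\psi_\pm^w v\|$. The content is then the flat middle, where $A\equiv1$ forces $V_1\equiv0$, so $\tQ=-h^2\p_x^2+(1-z)$ exactly: at every point of $K$ with $|x|<a$ the linearisation of $\hamvf_{p_0}$ is nilpotent (a parabolic fixed point), there is no local instability whatsoever, cutoffs of constants lie in the approximate kernel of $\tQ$ at $z=1$, and a naive escape function vanishes to infinite order on $K$.

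To extract positivity in the middle I would carry out an inhomogeneous blowup near $\{\xi=0\}$: rescale via the blowdown map $\B$ of \eqref{blowdown} with $\alpha+\beta=1$ (and, near the endpoints, rescale $x\mp a$ as well), conjugate $\tQ$ by the corresponding unitary rescaling relating $\Psi_h$ and $\Psi_{\th}$, and in the blown-up picture build an escape function $G\in\s_{\alpha,\beta}$ whose Hamilton derivative is nonnegative and transports $L^2$ mass out of the flat segment toward $\supp\psi_\pm$, where the previous step closes the estimate. The key analytic input is the $0$-Gevrey hypothesis $A\in\GG^0_\tau$, $\tau<\infty$: it forces $V_0-1$, which vanishes to infinite order at $x=\pm a$, as well as the subprincipal potential $V_1=\tfrac{n-1}{2}A''A^{-1}-\tfrac{(n-1)(n-3)}{4}(A')^2A^{-2}$, to have all derivatives controlled by their lower-order values with only polynomial-in-order losses. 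This is exactly what is needed so that, after rescaling, the symbols of $\tQ$, $G$, and $h^2V_1$ lie in $\s_{\alpha,\beta}$ with seminorms \emph{uniform} in $h$ and $\th$; then Lemma~\ref{l:err} and the expansion \eqref{eq:abc} apply, and $\tfrac{i}{h}[\tQ,G^w]$ is a nonnegative main term plus errors that are $o(h^{2+\eta})$ relative to $\|v\|^2$. In particular the contribution of $h^2V_1$ near the endpoints, which is borderline $O(h^2)$ but is multiplied by a factor vanishing to infinite order at $x=\pm a$, is absorbed once the $0$-Gevrey bound tames the derivatives amplified by the blowup.

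Finally I would reassemble: via $\tQ v=\sum_\bullet\tQ\psi_\bullet^w v+O(h^\infty)$ together with control of the cross terms $[\tQ,\psi_\bullet^w]=O(h)$ (supported in the thin overlaps) by an almost-orthogonality and Cauchy--Schwarz argument, the middle and edge estimates combine and, after optimising the blowup exponent $\alpha$ and the $z$-localisation, give $\|\tQ\phi^w u\|\ge C_{\epsilon,\eta}h^{2+\eta}\|\phi^w u\|$ uniformly for $z\in[1-\epsilon,1+\epsilon]$. The main obstacle, I expect, is precisely the flat centre of the cylinder: the flow carries no instability there, so the positivity required by the commutator estimate cannot be generated locally and must be imported from the degenerate maxima at $x=\pm a$ across the flat region, and the transport has to be arranged so that every error---most delicately those generated by $h^2V_1$ under the blowup---stays strictly below the threshold $h^{2+\eta}$; this is where finiteness of the $0$-Gevrey order $\tau$ is indispensable.
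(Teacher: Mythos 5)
There is a genuine gap, and it is exactly at the point you yourself flag as ``the main obstacle''. In the flat middle of the cylinder every point of $\{\xi=0,\ |x|<a\}$ is a \emph{fixed point} of the Hamilton flow of $p_0=\xi^2+V_0$: since $V_0'\equiv 0$ there, $H_{p_0}G=2\xi\,\p_xG$ vanishes identically on the trapped set no matter what escape function $G$ you choose (before or after the inhomogeneous blowup), so no nonnegative-commutator/sharp G\aa rding argument can produce a lower bound of any size on that segment, and no propagation argument can ``transport $L^2$ mass out of the flat segment toward $\supp\psi_\pm$'' because there is literally no flow connecting the interior of the segment to the endpoints. For the same reason your reassembly step fails: the cross terms $[\tQ,\psi_\bullet^w]=\O(h)$ are microsupported on the overlaps, which lie \emph{inside} the trapped set (fixed points again), so they cannot be controlled by propagation of singularities and they dwarf the target main term $h^{2+\eta}\|\phi^wu\|$. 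Your endpoint step is also circular: the proof of Proposition~\ref{P:ml-inv-3a} generates positivity on the flat side of the maximum only through the added bump $W_h$, so a ``one-sided'' adaptation at $x=\pm a$ still has to handle the whole flat interval, i.e.\ precisely the region your partition was meant to decouple.

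The paper's proof avoids the decomposition altogether. Propositions~\ref{P:ml-inv-3a} and~\ref{P:ml-inv-3b} are proved by perturbing the operator, not by splitting phase space: one sets $W_h(x)=\tGamma(h)f(x)$ with $f(x)=1-\tfrac1{2m}x^{2m}$ on a neighbourhood of the \emph{entire} interval $[-a,a]$ and $\tGamma=c_0\Gamma\sim h^{2+\eta}$, so that $V_{0,h}=V_0+W_h$ has a single finitely degenerate maximum spanning the cylinder; a single singular commutant $a=\Lambda(\Xi)\Lambda(X)\chi\chi$ in the two-parameter calculus then gives a lower bound for $Q_1=(hD)^2+V_h-z$ of size $\Gamma^{1/m}$ times explicit powers of $h/\th$, $x_0$ (this is the ``nonlinear gain'' $\Gamma^{1/(m+1)}\gg\Gamma$), with the flat and subprincipal errors $g_2,g_3$ and the third-order Weyl term tamed by the $0$-Gevrey hypothesis as in Lemmas~\ref{L:g2g3} and~\ref{L:Q-comm-error-3a}; finally $\tQ$ is compared to $Q_1$, and the discrepancy $\|(V_{0,h}-V_0)v\|\leq C c_0\,\omega(h)\|v\|$ is absorbed precisely because the commutator bound beats $\Gamma$ once $c_0$ is fixed small. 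This potential-modification-and-absorption mechanism is the essential idea missing from your proposal; without it (or some substitute that manufactures instability across the flat segment), the commutator scheme you describe cannot get off the ground in the middle of the cylinder.
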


\begin{remark}
For similar reasons, we expect the optimal lower bound in this case
should be $h^2$.
\end{remark}

\begin{proof}
The proof of these Propositions is very similar, so we put them together.
We will first prove Proposition \ref{P:ml-inv-3a}, and then point out how
the proof must be modified to get Proposition \ref{P:ml-inv-3b}.

The idea of the proof of Proposition \ref{P:ml-inv-3a} (and indeed Proposition
\ref{P:ml-inv-3b}) is to add a small $h$-dependent bump with a {\it
  finitely degenerate} maximum, and then use the result of Proposition
\ref{P:ml-inv-1}.  Of course the bump has to be sufficiently small
that the operator $\tQ$ is close to the perturbed operator.

Choose a point $x_0 = x_0(h) >0$ and $\epsilon>0$ so that $x_0$ is
the smallest point such that 
\[
- x V_0'( x ) \geq \frac{ h}{\varpi(h)}, \,\, \, x_0 \leq | x | \leq \epsilon,
\]
where $\varpi(h)$ will be determined later.  As long as $\varpi(h) \gg
h$, this implies that $x_0 = o(1)$.  We remark that, of course, $x_0$
depends also on the choice of $\varpi(h)$, but for any $\varpi$, there
is such a choice, since $V_0'$ vanishes to infinite order at $x = 0$.  Further, as
$V_0'(x) = \O(x^\infty)$ near $x = 0$, we have $x_0 \gg h^\delta$ for
any $\delta>0$.  Fix $m \geq
2$ to be determined later in the proof ($m$ will be large), and choose also
an even function $f \in \Ci_c([-2,2]) \cap \GG^0_\tau$ for some $\tau
< \infty$, with $f(x) = 1 - \frac{1}{2m}
x^{2m}$ for $| x | \leq 1$, and $f' (x) \leq 0$ or $x \in [0,2]$.  For
another parameter $\Gamma(h)>0$ to be determined, let
\[
W_h(x) = \Gamma(h) f(x/x_0),
\]
and let
\[
V_{0,h}(x) = V_0(x) + W_h(x)
\]
and
\[
V_h(x) = V(x) + W_h(x)
\]
(see Figure \ref{fig:Vh-1}).  The parameter $\Gamma(h)$ will be seen
to be $h^{2+ \eta}$ for $\eta >0,$ $\eta = \O(m^{-1})$ as $m \to \infty$.   
\begin{figure}
\hfill
\centerline{\input{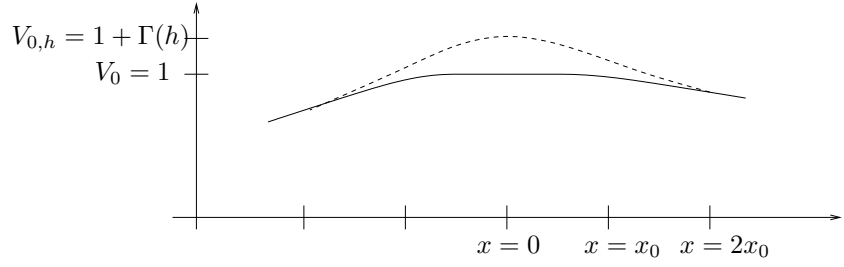}}
\caption{\label{fig:Vh-1} The potential $V_0$ and the modified potential
  $V_{0,h}$ (in dashed). }
\hfill
\end{figure}
By construction,
\[
| V(x) - V_h(x) | \leq | W_h | \leq \Gamma(h).
\]

Let $Q_1 = (hD)^2 + V_h$ with symbol $q_1 = \xi^2 + V_h$.  
The Hamilton vector field $\hamvf$ associated to the symbol $q_1$ is given by
\begin{align*}
\hamvf & = 2\xi\pa_{x} -V_h' \pa_{\xi} \\
& = 2 \xi \pa_x - \left( \frac{\Gamma(h)}{x_0} f'(x/x_0) + V_0'(x) +
  h^2 V_1'(x)
\right) \p_\xi .
\end{align*}

We will use the same change of coordinates and the same singular
commutant as in \cite{ChWu-lsm}, but we also have to track the loss
coming from the coefficient $\Gamma(h)$.  For $\alpha = 1/(m+1)$, let 
$$
\Xi=\frac{\xi}{(h/\th)^{m\alpha}},\quad X = \frac{x}{(h/\th)^\alpha},
$$
so that in the new blown-up coordinates $\Xi,X,$
\begin{equation}\label{blownupvf-3a}
\hamvf= (h/\th)^{\frac{m-1}{m+1}}\big(\Xi \pa_X -
(h/\th)^{(1-2m)/(m+1)} V_h ' (
(h/\th)^\alpha X) \pa_\Xi \big)
\end{equation}

Let $\Lambda(s)$ be defined as in \cite{ChWu-lsm} by fixing
$\epsilon_0>0$ and setting 
\[
\Lambda(s) = \int_0^s \lll z \rrr^{-1-\ep_0} dz,
\]
so that $\Lambda$ is a zero order symbol satisfying $\Lambda(s) \sim
s$ for $s$ near $0$.  
Following \cite{Chr-NC,Chr-QMNC,ChWu-lsm}, we define 
$$
a(x,\xi;h) = \Lambda(\Xi)\Lambda(X)\chi(x)\chi(\xi)= \Lambda(\xi/(h/\th)^{m\alpha}) \Lambda(x/(h/\th)^\alpha)\chi(x)\chi(\xi),
$$
where $\chi(s)$ is a cutoff function equal to $1$ for $\abs{s}<\delta_1$
and $0$ for $s>2\delta_1$ ($\delta_1$ will be chosen shortly).
Then $a$ is bounded, and a $0$ symbol in $X,\Xi:$
$$
\abs{\pa_X^\alpha \pa_\Xi^\beta a}\leq C_{\alpha,\beta}.
$$
(Recall that $x=(h/\th)^\alpha X$ and $\xi=(h/\th)^{m\alpha}\Xi.$)
Using \eqref{blownupvf-3a}, it is simple to
compute
\begin{equation}\label{gdefn-3a}
\begin{aligned}
\hamvf (a) = & (h/\th)^{\frac{m-1}{m+1}}\chi(x)\chi(\xi)\big(
\Lambda(\Xi)
\ang{X}^{-1-\ep_0}\Xi \\
& - (h/\th)^{(1-2m)/(m+1)} \Lambda(X) V_h'( (h/\th)^\alpha X)
\ang{\Xi}^{-1-\ep_0} \big)+r\\
\equiv & (h/\th)^{\frac{m-1}{m+1}} g+r
\end{aligned}
\end{equation}
with $$\supp r\subset \{\abs{x}>\delta_1\} \cup \{\abs{\xi}>\delta_1\}$$
($r$ comes from terms involving derivatives of $\chi(x)\chi(\xi)$).

For $| X |
\leq (h/\th)^{-\alpha} x_0$, we have
\begin{align*}
- (h/\th)^{(1-2m)/(m+1)} & \Lambda(X) V_h'( (h/\th)^\alpha X)
\ang{\Xi}^{-1-\ep_0}  \\
& =  \Gamma(h) x_0^{-2m} \Lambda(X) X^{2m-1} 
\ang{\Xi}^{-1-\ep_0} + g_2,
\end{align*}
with 
\[
g_2 = -(h/\th) ^{(1-2m)/(m+1)} \Lambda(X) (V_0'+h^2V_1')( (h/\th)^\alpha X)
\ang{\Xi}^{-1-\ep_0}  .  
\]
Note that we always have $-\Lambda(x) V_0'(x) \geq 0$, so we expect
the quantization of $g_2$ to be at least semibounded below.  This is
demonstrated in Lemma \ref{L:g2g3} below.

 For $|X| \leq (h/\th)^{-\alpha} x_0$ and $|
\Xi | \leq (h / \th )^{-\alpha m} \delta_1$ consider 
\begin{align*}
g = & \chi(x)\chi(\xi)\big(
\Lambda(\Xi)
\ang{X}^{-1-\ep_0}\Xi \\
& - (h/\th)^{(1-2m)/(m+1)} \Lambda(X) V_h'( (h/\th)^\alpha X)
\ang{\Xi}^{-1-\ep_0} \\
= & \Lambda(\Xi) \Xi \ang{X}^{-1-\ep_0} + \Gamma(h) x_0^{-2m} \Lambda(X) X^{2m-1} 
\ang{\Xi}^{-1-\ep_0} + g_2 \\
= & \lambda^2 \left( \lambda^{-1}\Lambda ( \Xi ) (\lambda^{-1}\Xi )
  \ang{X}^{-1-\ep_0} +  \lambda^{-2m-2}\Gamma x_0^{-2m} \lambda \Lambda (X) ( \lambda X)^{2m-1} \ang{\Xi}^{-1-\ep_0} \right)+ g_2 \\
= & \lambda^2 \left( \tLambda_1( \Xi') \Xi' \ang{ \lambda^{-1} X'
  }^{-1-\ep_0} + \lambda^{-2m-2}\Gamma x_0^{-2m} \tLambda_2 (X') (X')^{2m-1} \ang{ \lambda \Xi'
  }^{-1-\ep_0} \right) + g_2 \\
=: & g_1 + g_2,
\end{align*}
where we have used the $L^2$-unitary rescaling
\[
X' = \lambda X, \,\,\, \Xi' = \lambda^{-1}\Xi,
\]
and $\lambda>0$ (small) will be determined in the course of the proof.

The functions $\tLambda_j$, $j = 1,2$, are defined by changing variables:
\[
\tLambda_1 ( \Xi' ) = \lambda^{-1} \Lambda ( \Xi) = \lambda^{-1}
\Lambda ( \lambda \Xi' ),
\]
and
\[
\tLambda_2( X') = \lambda \Lambda (X) = \lambda \Lambda ( \lambda^{-1}
X' ).
\]
The error term $g_2$ is the term in the expansion of $g$ coming
from estimating using $W_h'$ rather than $V_h'$.  We will deal with $g_2$ in due course.  
We are now microlocalized on a set where
\[
| X' | \leq \lambda (h/\th)^{-\alpha} x_0, \,\,\, | \Xi' | \leq \lambda^{-1} (h/\th)^{-m\alpha} \delta_1,
\]
and will be quantizing in the $\th$-Weyl calculus, so we need symbolic
estimates on these sets.  

If 
\[
| \lambda^{-1} X'| \leq  \delta_1, \text{ and } | \lambda \Xi' | \leq \delta_1,
\]
and $\delta_1>0$ is sufficiently small, 
then $\tLambda_1( \Xi') \sim \Xi'$ and $\tLambda_2( X' ) \sim X'$, so
that $g_1$ is bounded below as follows:  
\begin{equation}
\label{E:g1-1}
g_1 \geq \min \left\{ \lambda^2, \lambda^{-2m} \Gamma x_0^{-2m}
\right\} ( (\Xi')^2 + (X')^{2m} ).
\end{equation}
Then the $\th$-quantization of $g_1$ is bounded below microlocally on
this set by this minimum times $\th^{2m/(m+1)}$ (see \cite[Lemma A.2]{ChWu-lsm}).

Now on the complementary set, we have one of either $|
\lambda^{-1} X'|^{1 + \epsilon_0}$ or $| \lambda \Xi' |$ is larger than, say,
$(\delta_1/2)^{1 + \epsilon_0}$.  We also need to keep track of the {\it relative} size
of these two quantities.  
If $\abs{\lambda \Xi'}\geq
 \max\left( \abs{ \lambda^{-1} X'}^{1+\ep_0}, (\delta_1/2)^{1 +
     \epsilon_0} \right)$
then
\begin{align}
g_1 & \geq \lambda^2 \tLambda_1( \Xi') \Xi' \ang{ \lambda^{-1} X'
}^{-1-\ep_0} \notag \\
& \geq c\lambda^2 \tLambda_1( \Xi') \frac{ \Xi'}{ \abs{\lambda \Xi'}}
\notag \\
& = c\lambda \tLambda_1 (\Xi') \sgn (\Xi') \notag \\
& = c \Lambda ( \lambda \Xi'  ) \sgn ( \Xi' ) \notag \\
& \geq c_{\delta_1}. \label{E:g1-2}
\end{align}
Hence the $\th$-quantization of $g_1$ is bounded below by a positive
constant, independent of $h$ and $\th$ on this set.

The remaining set is a bit more difficult.  If 
\[
\abs{\lambda^{-1}  X'}^{1+\ep_0}\geq
\max\left(\abs{\lambda \Xi'}, (\delta_1 /2)^{1 + \ep_0} \right),
\]
 then
\begin{align}
g_1 & \geq \lambda^2 \Big( \tLambda_1 ( \Xi' ) \Xi' \lll
  (h/\th)^{-\alpha} x_0 \rrr^{-1-\epsilon_0} \notag \\
& \quad + \lambda^{-2m-2} \Gamma
  x_0^{-2m} \tLambda_2(X')(X')^{2m-1} \lll \lambda \Xi'  \rrr^{-(1 +
    \epsilon_0)} \Big) \notag \\
& =  \lambda^2 \Big( \lambda^{-1}\Lambda ( \lambda\Xi' ) \Xi' \lll
  (h/\th)^{-\alpha} x_0 \rrr^{-1-\epsilon_0} \notag \\
& \quad + \lambda^{-2m-2} \Gamma
  x_0^{-2m} \lambda \Lambda(\lambda^{-1} X')(X')^{2m-1}       \lll \lambda \Xi' \rrr^{-(1 +
    \epsilon_0)}    \Big) \notag \\
& =  \lambda^2 \Big( \lambda^{-1}\Lambda ( \lambda\Xi' ) \Xi' \lll
  (h/\th)^{-\alpha} x_0 \rrr^{-1-\epsilon_0} \notag \\
& \quad + \lambda^{-2m} \Gamma
  x_0^{-2m}  \Lambda(\lambda^{-1} X')(X')^{2m-2}   (
  \lambda^{-1} X' )       \lll \lambda \Xi'  \rrr^{-(1 +
    \epsilon_0)}      \Big) \notag \\
& \geq \begin{cases}
c_{\delta_1} \min \{ \lambda^2 (h/\th)^{\alpha(1 + \epsilon_0)}
x_0^{-1-\epsilon_0}, \lambda^{-2m+2} \Gamma x_0^{-2m}
 \} \\
\quad \times ( (\Xi')^2 +
(X')^{2m-2}), \text{ if } | \lambda \Xi' | \leq (\delta_1/2)^{1 + \epsilon_0}, \\
c_{\delta_1}'  (h/\th)^{\alpha ( 1 + \epsilon_0)} x_0^{-1-\epsilon_0},
\text{ if } | \lambda \Xi' | \geq (\delta_1/2)^{1 + \epsilon_0}.
\end{cases} \label{E:g1-3}
\end{align}

We now optimize the minimum in \eqref{E:g1-3} to determine $\lambda$
in terms of the other parameters:
\[
\lambda^2 (h/\th)^{\alpha(1 + \epsilon_0)}
x_0^{-1-\epsilon_0}= \lambda^{-2m+2} \Gamma x_0^{-2m},
\]
or
\[
\lambda^2 = \Gamma^{1/m}  (h/\th)^{ -\alpha(1+\epsilon_0 ) / m }
x_0^{ -2 +  (1+ \epsilon_0 )/ m }.
\]
Then the minimum is
\[
\lambda^2 (h/\th)^{ \alpha (1 + \epsilon_0)} x_0^{-1-\epsilon_0} =
\Gamma^{1/m} (h/\th)^{  \alpha(1+\epsilon_0 ) ( m-1)/m  } x_0^{  -3
  -\epsilon_0 + (1+ \epsilon_0 )/m  },
\]
and the $\th$-quantization of $g_1$ on this set is bounded below
microlocally by this number times $\th^{2(m-1)/m}$ (see \cite[Lemma
A.2]{ChWu-lsm}).

\begin{remark}
We pause to remark that here is one place where alternative methods to
optimize the lower bounds give worse results.  For example, on the set
where 
\[
| \lambda \Xi'| \leq (\delta_1/2)^{1+\epsilon_0} \leq | \lambda^{-1}
X'|^{1 + \epsilon_0}, 
\]
we could
estimate $g_1$ from below using only the second term.  This gives a
lower bound of $c_{\delta_1}''\Gamma x_0^{-2m}$, which is much worse
than that computed above.

\end{remark}

Finally, recalling that eventually $\th>0$ will be fixed and $h \ll
\th$, taking the worst lower bound from \eqref{E:g1-1} through
\eqref{E:g1-3}, we obtain for a function $u$ with $h$-wavefront set
contained in the set where $| \lambda^{-1} X'| \leq (h/\th)^{-\alpha}
x_0$, $| \lambda \Xi'| \leq (h/\th)^{-m\alpha} \delta_1$,
\[
\lll \Op_{\th} (g_1) u, u \rrr \geq  \Gamma^{1/m} (h/\th)^{  \alpha(1+\epsilon_0 ) ( m-1)/m  } x_0^{  -3
  -\epsilon_0 + (1+ \epsilon_0 )/m  } \th^{2(m-1)/m} \| u \|^2.
\]

On the other hand, if $ (h/\th)^{-\alpha} x_0 \leq | X | \leq  (h/\th)^{-\alpha} \delta_1$, we have
\begin{align}
- & (h/\th)^{(1-2m)/(m+1)} \Lambda(X) V_h'( (h/\th)^\alpha X)
\ang{\Xi}^{-1-\ep_0} \notag \\
& = - (h/\th)^{(1-2m)/(m+1)} \sgn(X) B(X)
\frac{| (h/\th)^\alpha X|}{| (h/\th)^\alpha X|} V'( (h/\th)^\alpha X)
\ang{\Xi}^{-1-\ep_0} + g_3 \notag \\
& \geq (h/\th)^{(1-2m)/(m+1)} \left( \frac{B(X)}{ | (h/\th)^\alpha X| }
 \frac{h}{\varpi(h)} - \O(h^2) \right) \ang{\Xi}^{-1-\ep_0} + g_3
\notag \\
& \geq C \frac{ h^{(2-m)/(m+1)} \th^{(2m-1)/(m+1)} }{ \varpi(h) }
\ang{\Xi}^{-1-\ep_0} + g_3 \notag \\
& \geq C \frac{ h^{(2-m)/(m+1)} \th^{(2m-1)/(m+1)} }{ \varpi(h) }
(h/\th)^{(1 + \ep_0) m/(1 + m) } + g_3 \notag \\
& \geq 2 \frac{ h^{3/(m+1)} \th^{(m-1)/(m+1)}}{\varpi(h) } + g_3
, \label{E:tV-controls}
\end{align}
where $B(X) \geq c_0 >0$.  
The second inequality holds provided $h /\varpi  \gg h^2$ (so
that $h^2 V_1'$ is controlled by $V_0'$), and the last inequality holds
as $h \to 0$
provided $\ep_0 < 1/m$.  The error $g_3 \geq 0$ comes from using $V'$
in the expansion of $g$ rather than $W_h'$.  

We now deal with the (nearly) positive error terms $g_2$ and $g_3$.  
\begin{lemma}
\label{L:g2g3}
The error terms $g_2$ and $g_3$ are semi-bounded below in the
following sense: if $u(X)$ has wavefront set localized in
\[
\{ |X| \leq \epsilon (h/\th)^{-1/(m+1)}, \,\, | \Xi | \leq \epsilon
(h/\th)^{-m/(m+1)} \},
\]
then for any $\delta >0$ and $N >0$, 
\[
\lll \Op_{\th} ( g_j ) u, u \rrr \geq -C_N h^{(N-2)m/(m+1) - \delta}
\th^{2m/(m+1)} \| u \|^2,
\]
for $j = 2, 3$.

\end{lemma}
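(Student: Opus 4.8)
The plan is to treat $g_2$ and $g_3$ in a uniform way: both are of the form $(h/\th)^{(1-2m)/(m+1)}\Lambda(X)\,(V_0'+h^2V_1')((h/\th)^\alpha X)\,\ang{\Xi}^{-1-\ep_0}$ (times $\pm\sgn(X)$ and, for $g_3$, the harmless bounded elliptic factor $B(X)/|(h/\th)^\alpha X|$), localized respectively to $|X|\le (h/\th)^{-\alpha}x_0$ and to $(h/\th)^{-\alpha}x_0\le |X|\le (h/\th)^{-\alpha}\delta_1$. First I would record the sign structure: on all of $|x|\le \epsilon$ we have $-x V_0'(x)\ge 0$ (since $x=0$ is a maximum of $V_0$ with $\pm V_0'<0$ for $\pm x>0$), hence $-\sgn(X)\,V_0'((h/\th)^\alpha X)\ge 0$ and $-\Lambda(X)\,\sgn(X)\ge 0$ because $\Lambda(s)\sim s$ is odd and monotone near $0$; so the genuine ``$V_0'$'' part of $g_j$ is the Weyl quantization of a \emph{nonnegative} symbol. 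The only sign-indefinite piece is the subpotential contribution $h^2 V_1'((h/\th)^\alpha X)$, which is $\O(h^2)$ uniformly on the relevant $X$-range.

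The key steps are then: (i) Split $g_j = g_j^{\mathrm{main}} + g_j^{\mathrm{err}}$, where $g_j^{\mathrm{main}}$ is the $V_0'$-part (quantization $\ge 0$ up to a calculus remainder) and $g_j^{\mathrm{err}} = \mp (h/\th)^{(1-2m)/(m+1)}\Lambda(X)\,h^2 V_1'((h/\th)^\alpha X)\,\ang{\Xi}^{-1-\ep_0}$ (with the extra bounded factor in the $g_3$ case). (ii) For $g_j^{\mathrm{main}}$: the naive sharp-G\aa rding / Fefferman--Phong estimate in the blown-up $\th$-calculus gives $\ang{\Op_\th(g_j^{\mathrm{main}})u,u\rangle \ge -C\th\|\Op_\th(\text{2nd derivs of }g_j^{\mathrm{main}})u\|\,\|u\|$ type control; on the wavefront localization $|X|\lesssim (h/\th)^{-1/(m+1)}$, $|\Xi|\lesssim(h/\th)^{-m/(m+1)}$ a derivative in $X$ costs a factor $(h/\th)^{1/(m+1)}$ relative to the symbol size and a derivative in $\Xi$ costs $(h/\th)^{m/(m+1)}$, exactly as in the symbol class $\s_{\alpha,\beta}$ with $\alpha=1/(m+1)$, $\beta=m/(m+1)$ (note $\alpha+\beta=1$), and this is precisely the situation covered by the two-parameter calculus of Lemma \ref{l:err} / \cite[Lemma A.2]{ChWu-lsm}. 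Iterating the sharp-G\aa rding remainder $N$ times, each step loses one power of $\th$ but the symbol is $\GG^0_\tau$, so derivatives of $V_0'$ are only polynomially worse in the order of vanishing — this is exactly where the $0$-Gevrey hypothesis enters — and after $N$ steps one is left with $-C_N (h/\th)^{\text{(gain)}}\th^{2m/(m+1)}\|u\|^2$, which on substituting the size of $h/\th$ (recall $\th$ fixed, $h\ll\th$, and $x_0\gg h^\delta$ for all $\delta$) is bounded below by $-C_N h^{(N-2)m/(m+1)-\delta}\th^{2m/(m+1)}\|u\|^2$ after absorbing the $x_0$-powers into the $h^{-\delta}$ (using $x_0 = o(1)$ but $x_0\ge h^\delta$). (iii) For $g_j^{\mathrm{err}}$: bound its symbol and all its derivatives directly — it carries an explicit factor $h^2$, and $(h/\th)^{(1-2m)/(m+1)} = \th^{(2m-1)/(m+1)}h^{-(2m-1)/(m+1)}$, $\Lambda(X)\,\ang{\Xi}^{-1-\ep_0}\le |X|\lesssim (h/\th)^{-1/(m+1)}$, so the symbol is $\O(h^{2-(2m-1)/(m+1)-1/(m+1)}\th^{\cdots}) = \O(h^{2/(m+1)}\th^{\cdots})$, hence $\Op_\th(g_j^{\mathrm{err}})$ is bounded on the localization with the same kind of gain; this term is in fact \emph{positive} in size order and trivially absorbed.

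The main obstacle — and the reason the statement is phrased with the ``for any $\delta>0$, $N>0$'' loss rather than a clean bound — is step (ii): the Fefferman--Phong/sharp-G\aa rding remainder in the two-parameter calculus does not close at any finite order (the calculus is marginal, $\alpha+\beta=1$, so one does not gain in $h$), so one cannot simply say ``the quantization of a nonnegative symbol is $\ge -Ch^{2m/(m+1)}$.'' Instead one iterates: each application of sharp-G\aa rding trades the positivity of one symbol for a new error symbol that is one power of $\th$ smaller but involves two more derivatives of $V_0$, and the $0$-Gevrey control is what keeps those derivative bounds from degenerating worse than polynomially in the (unknown, possibly enormous) order of vanishing of $V_0'$ at $x=0$. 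After $N$ iterations the leftover is comfortably below the target threshold for $N$ large, which is all that is needed downstream. I would organize this as: first the sign lemma for $-\Lambda V_0'$, then a single clean statement of the iterated sharp-G\aa rding bound in the blown-up $\th$-calculus on the given wavefront set, then the bookkeeping that converts powers of $h/\th$ and $x_0$ into the stated $h^{(N-2)m/(m+1)-\delta}\th^{2m/(m+1)}$, and finally the trivial estimate for the $h^2V_1'$ error term.
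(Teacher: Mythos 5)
There are two genuine gaps here, and they sit exactly at the two points where the paper has to work hardest. First, your step (ii) rests on an ``iterated sharp G\aa rding / Fefferman--Phong'' gaining arbitrarily many powers of $\th$: no such iteration exists for a general nonnegative symbol (the general losses are $\th$ and $\th^2$ and stop there), and even if it did, powers of $\th$ are useless for this lemma, since $\th$ is ultimately frozen as a fixed small constant while the required bound $-C_N h^{(N-2)m/(m+1)-\delta}\th^{2m/(m+1)}$ must be small in $h$. The symbol $g_2$ is \emph{not} small in $h$ on the whole localization region (near $|X|\sim (h/\th)^{-\alpha}x_0$ it is of size comparable to the main positive term in the commutator argument), so a single G\aa rding remainder, or any finite iteration, gives at best $-C\th^{k}\times(\text{order-one seminorms})$, nowhere near the target. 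The actual mechanism in the paper is different: one chooses $h$-dependent transition points $x_1<x_2$ defined by $-x_jV_0'(x_j)=h^{Nm/(m+1)}$, $h^{Nm/(m+1)-\delta}$, splits $u=\tpsi u+(1-\tpsi)u$ with a cutoff adapted to $[x_1,x_2]$, observes that on $\supp\tpsi$ the symbol itself is already $\O(h^{Nm/(m+1)-\delta})$ times an admissible symbol, while on $\supp(1-\tpsi)$ one factors out the prefactor $h^{Nm/(m+1)}$ against an elliptic nonnegative factor before applying sharp G\aa rding \emph{once}, so the remainder inherits that prefactor. The $N$ comes from the placement of $x_1,x_2$, not from the quantization. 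Moreover the 0-Gevrey hypothesis is used there precisely to prove $|x_2-x_1|\gg h^{\delta_2}$ for every $\delta_2>0$ (via a Taylor/quotient argument on $V_0'$), which is what makes $\tpsi$ an admissible symbol in the blown-up calculus; your proposal contains no analogue of this separation statement, and without it the spatial splitting (or any substitute for it) is not available.

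Second, your step (iii) disposes of the subpotential term by a crude size bound, but the crude bound is far too weak: with $V_1'$ merely bounded, the symbol $(h/\th)^{(1-2m)/(m+1)}\Lambda(X)\,h^2V_1'((h/\th)^\alpha X)\ang{\Xi}^{-1-\ep_0}$ is only $\O\bigl(h^{3/(m+1)}\th^{(2m-1)/(m+1)}\bigr)$, which for large $m$ is nearly of order one, whereas the lemma requires the negative part to be $\O\bigl(h^{(N-2)m/(m+1)-\delta}\bigr)$ for every $N$ (and even the downstream application needs roughly $h^{2}$-size smallness). This term also has no sign, so it cannot be ``absorbed'' by positivity. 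What saves it is that $V_1'$ is built from derivatives of $A$ and hence vanishes to infinite order at the critical point; the paper exploits this through the 0-Gevrey comparison $h^2|V_1'|\le Ch^{2-s\tau\delta_2}|V_0'|$ on $\supp(1-\tpsi)$ (absorbing it into the signed $V_0'$ part) and through evaluation at the endpoint $x_2$ on $\supp\tpsi$ (yielding an $h^{Nm/(m+1)-\delta}$ bound). So the 0-Gevrey hypothesis enters in two concrete places --- the separation $|x_2-x_1|\gg h^{\delta_2}$ and the domination of $h^2V_1'$ by $V_0'$ --- neither of which appears in your outline, and the ``iterated G\aa rding'' step it is meant to replace would not close the argument.
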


\begin{proof}

We prove the relevant bounds for $x \geq 0$.  The analysis for $x \leq
0$ is similar.  For $g_2$, for $N>0$ large, and $\delta>0$ small, choose $0 < x_1 <
x_2 = o(1)$ satisfying
\[
-x_1 V_0'(x_1) = h^{Nm/(m+1)}
\]
and
\[
-x_2 V_0'(x_2) = h^{Nm/(m+1)-\delta}.
\]
As usual, since $V_0'(x) = \O( x^\infty )$, the points $x_j$, $j = 1, 2$
satisfy $x_j \gg h^{\delta_2}$ for any $\delta_2 >0$.  The 0-Gevrey
condition also implies $|x_2-x_1| \gg h^{\delta_2}$ for any
$\delta_2>0$ as well.  To see this, Taylor's theorem says
\[
(V_0'(x_2) - V_0'(x_1) ) = V_0''(\xi) (x_2 - x_1)
\]
for some $x_1 \leq \xi \leq x_2$.  The 0-Gevrey condition and
monotonicity near $x = 0$ implies
\[
| V_0''(\xi) | \leq | V_0''(x_2) | \leq C \left| \frac{V_0'(x_2)}{x_2^\tau} \right|,
\]
so that
\[
(V_0'(x_2) - V_0'(x_1) ) \leq C \left| \frac{V_0'(x_2)}{x_2^\tau} \right| (x_2 -
x_1)
\]
for some $\tau < \infty$, which in turn implies (recalling $V_0' <0$ for $x>0$ near $0$)
\[
(x_2 - x_1 ) \geq C' \frac{ V_0'(x_1) - V_0'(x_2)}{|V_0'(x_2)|} x_2^\tau =
x_2^\tau \left(1 - \left| \frac{V_0'(x_1)}{V_0'(x_2)} \right| \right).
\]
We claim 
\[
\left| \frac{V_0'(x_1)}{V_0'(x_2)} \right| = o(1),
\]
which will finish the proof that $| x_2 - x_1 | \gg h^{\delta_2}$ for
any $\delta_2>0$.  For this, we write
\begin{align*}
\left| \frac{V_0'(x_1)}{V_0'(x_2)} \right| & = \left| x_1
  \frac{V_0'(x_1)}{x_2 V_0'(x_2)} \right| \frac{x_2}{x_1} \\
& = h^\delta \frac{x_2}{x_1}.
\end{align*}
Writing $x_2 = x_1 + \gamma(h)$, we are trying to show $\gamma(h) \gg
h^{\delta_2}$ for any $\delta_2>0$.  For a fixed $\delta_2$, 
if $\gamma(h) \gg h^{\delta_2}$ we're done.  
If
$\gamma(h) < h^{\delta_2}$, then we will produce a contradiction (in
fact showing that $\gamma(h) \gg h^{\delta_2}$).  If $\gamma(h) <
h^{\delta_2}$ for this $\delta_2$, then 
\[
\frac{\gamma(h)}{x_1} \ll 1,
\]
since $x_1 \gg h^{\delta_2}$.  Then it follows that 
\[
h^\delta \frac{x_2}{x_1} = h^\delta \frac{ x_1 + \gamma(h)}{x_1} \ll 2
h^\delta = o(1).
\]
Plugging into our earlier computation, we get
\[
x_2 - x_1 = C' x_2^\tau(1 - o(1)) \gg C'_{\delta_3} h^{\tau \delta_3}
\]
for any $\delta_3>0$.  Taking $\delta_3>0$ sufficiently small so that
$h^{\tau \delta_3} \gg h^{\delta_2}$ implies
\[
\gamma(h) = x_2 - x_1 \gg h^{\delta_2},
\]
which is a contradiction to our assumption that $\gamma(h) \leq h^{\delta_2}$.

Now let $\psi(x)$ be a smooth function, $\psi \geq 0$, $\psi(x) \equiv 1$ on
$[0,x_1]$ with $\psi(x) \equiv 0$ for $x \geq x_2$.  Assume also that
$| \partial_x^k \psi | \leq C_k| x_2 - x_1 |^{-k} = o(h^{-k \delta_2} )$
for any $\delta_2 >0$.  Let $\tpsi(X) = \psi((h/\th)^\alpha X )$,
$\alpha = 1/(m+1)$, so
that
\[
| \p_X^k \tpsi | \leq C_k (h/\th)^{\alpha k} \cdot o(h^{-k \delta_2}).
\]  
We have
\begin{align*}
& \lll \Op_{\th} ( g_2 ) u, u \rrr \\
& \quad = \lll \Op_{\th} ( g_2 ) (1 - \tpsi)
u, (1 - \tpsi ) u \rrr + 
\lll \Op_{\th} ( g_2 ) \tpsi u, \tpsi u \rrr  \\
& \quad \quad + 2\lll \Op_{\th} ( g_2 )
\tpsi u, (1 - \tpsi) u \rrr .
\end{align*}
We estimate each term separately.

On the support of $1-\tpsi$ (again recalling we are only looking at $x
\geq 0$), we have $(h/\th)^{1/(m+1)} X \geq x_1$ so
that in this region we can apply the 0-Gevrey condition to $V_1'$ to
absorb $h^2 V_1'$ into $V_0'$.
Recall that $V_1$ consists of quotients of derivatives of $A$ with
powers of $A$.  The function $A$ is bounded above and below by a
(positive) 
constant for $x$ small, so we are really only concerned with
estimating a finite number of derivatives of $A$.  
Then according to the 0-Gevrey condition, for any $\delta_2 >0$, we have for some $s, \tau < \infty$
\begin{align*}
h^2 | V_1'((h/\th)^\alpha X ) | & \leq C h^2 | x_1 |^{-s \tau}
|A'((h/\th)^\alpha X)| \\
& \leq C h^{2-s \tau \delta_2} | V_0' ((h/\th)^\alpha X ) |,
\end{align*}
and similarly for a finite number of derivatives of $V_1$.  By taking
$\delta_2>0$ sufficiently small we see that on the support of $1 -
\tpsi$, the quantization of $V_0'$ controls that of $h^2 V_1'$.  That
is, for $h>0$ sufficiently small, 
\begin{align*}
& \lll \Op_{\th} ( g_2 ) (1 - \tpsi)
u, (1 - \tpsi ) u \rrr \\
& \quad \geq -C\left(\frac{h}{\th}\right)^{\frac{(1-2m)}{(m+1)}} \lll \Op_{\th} ( \Lambda(X) V_0'( (h/\th)^\alpha X)
\ang{\Xi}^{-1-\ep_0}  ) (1 - \tpsi)
u, (1 - \tpsi ) u \rrr  .
\end{align*}

Then
we calculate in this region 
\begin{align*}
  & \left( \frac{h}{\th} \right)^{(1-2m)/(m+1)}  \left( \frac{
    \Lambda(X)}{(h/\th)^{1/(m+1)} X } \right) \\
& \quad \quad \times \left ( -
  (h/\th)^{1/(m+1)} X V_0'((h/\th)^{1/(m+1)} X ) \right) \lll \Xi
\rrr^{-1-\ep_0} \\
& \quad = h^{(1-2m)/(m+1)} \th^{(2m-1)/(m+1)}  h^{Nm/(m+1)} A(X, h,
\th) \lll \Xi \rrr^{-1-\ep_0}
\end{align*}
where $A$ is a symbol bounded below by a positive constant.  This
follows since
\begin{align*}
X & \geq \left( \frac{h}{\th} \right)^{-\alpha} x_1 \\
& \geq \left( \frac{h}{\th} \right)^{-\alpha} h^{\delta_2}
\end{align*}
for any $\delta_2>0$.  Taking $\delta_2< \alpha$, this lower bound is
(at least)
a positive constant.

On the
set where $A \lll \Xi \rrr^{-1-\ep_0} \geq 1$, this operator is
bounded below, while on the complement, we use the Sharp
G\r{a}rding inequality to get for any $\delta_2 >0$
\begin{align*}
& \lll \Op_{\th} ( g_2 ) (1 - \tpsi)
u, (1 - \tpsi ) u \rrr \\
& \quad \geq -C_{\delta_2} \th h^{((N-2)m + 2)/(m+1) -
  2\delta_2} \th^{(2m-2)/(m+1)} \| (1 - \tpsi)u \|^2.
\end{align*}
For the remaining two terms, on the support of $\tpsi$, we have
$0 \leq (h/\th)^{1/(m+1)} X \leq x_2$.  We know that $| \p_x^k A |$ is
an increasing function for small $x$, so that to estimate $V_1'$, we
estimate a finite number of derivatives of $A$ from above, we can
estimate at the right-hand endpoint $x_2$.  That is, we have as above
for $s, \tau < \infty$ and any $\delta_2>0$,  
\begin{align*}
h^2 | V_1'((h/\th)^\alpha X ) | & \leq C h^2 |x_2|^{-s \tau} |
V_0'(x_2) | \\
& \leq C h^{2 } |x_2|^{-s \tau -1} | x_2 
V_0'(x_2) | \\
& \leq C h^{2-\delta_2(1 + s \tau)} h^{N m/(m+1) - \delta}
\end{align*}
by our choice of $x_2$.  This implies that on the support of $\tpsi$,
$h^2 V_1'$ is controlled by a large power of $h$, by taking $\delta_2>0$
sufficiently small.  
That is,  in this region
\begin{align*}
g_2 & = (h/\th)^{(1-2m)/(m+1)} \Bigg[ \left( \frac{
    \Lambda(X)}{(h/\th)^{\alpha)} X } \right) \left( -
  (h/\th)^{\alpha} X V_0'((h/\th)^{\alpha} X ) \right) \\
& \quad -h^2 \Lambda(X)
V_1'((h/\th)^\alpha X)\Bigg] \lll \Xi
\rrr^{-1-\ep_0} \\
& =  h^{-2m/(m+1)} \th^{(2m)/(m+1)}  h^{Nm/(m+1) - \delta} A_1(X, h,
\th) \lll \Xi \rrr^{-1-\ep_0},
\end{align*}
where $A_1$ is a function satisfying
\[
| \partial_X^k A_1 | \leq C_{k, \delta_2} (h^{1/(m+1) - \delta_2 } \th^{-1/(m+1)}
)^k.
\]
Hence if $\delta_2 < 1/(m+1)$, 
\[
\lll \Op_{\th} ( g_2 ) \tpsi u, \tpsi u \rrr  = \O(h^{(N-2)m/(m+1) -
  \delta} \th^{2m/(m+1)} ) \| u \|^2,
\]
and similarly
\[
\lll \Op_{\th} ( g_2 )
\tpsi u, (1-\tpsi) u \rrr = \O(h^{(N-2)m/(m+1) -
  \delta} \th^{2m/(m+1)} ) \| u \|^2.
\]

The proof for $g_3$ is the same (since we have assumed $f \in \Ci_c
\cap \GG^0_\tau$ for some $\tau < \infty$), but slightly easier, since $g_3$ is
the error term coming from $W_h$ away from $x = 0$, and $W_h$ is
already $\O(h^2)$.

\end{proof}

Let us recap what we have shown so far and fix some of the
parameters.  We have perturbed our potential by a term of size
$\Gamma$, which we want to be much smaller than our lower bound on 
$h \Op_h(\hamvf (a))$.  That is, we want to solve
\begin{align*}
h \left( \frac{h}{\th} \right)^{(m-1)/(m+1)} \Gamma^{1/m} (h/\th)^{  \alpha(1+\epsilon_0 ) ( m-1)/m  } x_0^{  -3
  -\epsilon_0 + (1+ \epsilon_0 )/m  } \th^{2(m-1)/m} 
\gg \Gamma.
\end{align*}
As $m$ will be large, $\epsilon_0 < 1/m$, and $x_0 = o(1)$, it
suffices to solve
\begin{align*}
h^{\frac{2m}{m+1} + \frac{(m-1)(1 + \epsilon_0)}{m(m+1)} } & \th^{-\frac{(m-1)}{(m+1)}
  +2\frac{(m-1)}{m} - \frac{(m-1)(1 + \epsilon_0)}{m(m+1)} } \\
& = \Gamma^{(m-1)/m},
\end{align*}
or 
\[
\Gamma = h^{ 2m^2/(m^2-1) +  (1+ \epsilon_0) /(m+1)} \th^{2
  -m(m-1)/(m^2-1) -  (1+ \epsilon_0) /(m+1)  }.
\]
This means that for this value of $\Gamma$, our lower bound on $h
\Op_h(\hamvf (a))$
is 
\begin{align*}
& \Gamma 
x_0^{  -3
  -\epsilon_0 + (1+ \epsilon_0 )/m  } \\
& \quad = h^{ 2m^2/(m^2-1) +  (1+ \epsilon_0) /(m+1)} \th^{2
  -m(m-1)/(m^2-1) -  (1+ \epsilon_0) /(m+1)  }
x_0^{  -3
  -\epsilon_0 + (1+ \epsilon_0 )/m  } .
\end{align*}
Observe that the exponent of $h$ is $2 + \O(m^{-1})$ which can be made
smaller than $2 + \eta$ for any $\eta>0$ by taking $m$ large.

We also have to choose the parameter $\varpi(h)$.  For that we again
match lower bounds:

\begin{align*}
\frac{ h^{3/(m+1)} \th^{(m-1)/(m+1)}}{\varpi(h) } & = 
\Gamma^{1/m} (h/\th)^{  \alpha(1+\epsilon_0 ) ( m-1)/m  } x_0^{  -3
  -\epsilon_0 + (1+ \epsilon_0 )/m  } \th^{(2m-2)/m} \\
& =  h^{ 2m/(m^2-1) +  (1+ \epsilon_0) /(m+1)    } \\
& \quad \times \th^{4
-2/m  -(m-1)/(m^2-1) -  (1+ \epsilon_0) /(m+1)  } x_0^{  -3
  -\epsilon_0 + (1+ \epsilon_0 )/m  } ,
\end{align*}
or
\begin{align*}
\varpi(h) & = h^{3/(m+1)   -2m/(m^2-1) -  (1+ \epsilon_0) /(m+1)  } \\
& \quad \times \th^{ (m-1)/(m+1)   -4 +2/m
  +(m-1)/(m^2-1) +  (1+ \epsilon_0) /(m+1)  }    x_0^{ 3 +
  \epsilon_0 - (1+ \epsilon_0 )/m } .
\end{align*}
Taking $m$ sufficiently large yields $\varpi(h)$ satisfying
$h/\varpi(h) = o(1)$, so $\varpi(h)
\gg h$, as required to determine $x_0(h)$ and fix all the parameters.

All told, we have shown for a function $u(X)$ with semiclassical
wavefront set localized in a set $\{ | X | \leq \epsilon
(h/\th)^{-1/(m+1)}, | \Xi | \leq \epsilon (h/\th)^{-m/(m+1)} \}$ 
\begin{align*}
& h (h/\th)^{(m-1)/(m+1)} \lll \Op_{\th} (g) u, u \rrr \\
&\quad  \geq C \Gamma(h)  x_0^{ -3 -
  \epsilon_0 + (1 +
  \epsilon_0)/m  } 
\| u \|^2\\
& \quad \quad + h (h/\th)^{(m-1)/(m+1)} (\lll \Op_{\th} (g_2) u, u
\rrr + \lll \Op_{\th} (g_3) u, u \rrr )\\
& \quad \geq C 
h^{ 2m^2/(m^2-1) +  (1+ \epsilon_0) /(m+1)} \th^{2
  -m(m-1)/(m^2-1) -  (1+ \epsilon_0) /(m+1)  } \\
& \quad \quad \times 
x_0^{  -3
  -\epsilon_0 + (1+ \epsilon_0 )/m  } 
\| u \|^2 \\
& \quad \quad - C'_{N, \delta}
\th^{2m/(m+1)} h^{(N-2)m/(m+1) - \delta} h (h/\th)^{(m-1)/(m+1)} \| u \|^2 \\
& \quad \geq C''
h^{ 2m^2/(m^2-1) +  (1+ \epsilon_0) /(m+1)} \th^{2
  -m(m-1)/(m^2-1) -  (1+ \epsilon_0) /(m+1)  } \\
& \quad \quad \times
x_0^{  -3
  -\epsilon_0 + (1+ \epsilon_0 )/m  } 
\| u \|^2.
\end{align*}
We note that with these parameter values, of course $h/ \varpi = o(1)$, which
implies in turn that $x_0 = o(1)$, and since $h/\varpi \gg h^2$, the estimate \eqref{E:tV-controls} holds,
which closes the argument.

This concludes the study of the principal term in the commutator
expansion.  Of course we still have to control the lower order terms
in the commutator expansion, which we do in the following Lemma.

\begin{lemma}
\label{L:Q-comm-error-3a}
The symbol expansion of $[Q_1, a^w]$ in the $h$-Weyl calculus is of
the form
\begin{align*}
[Q_1, a^w] = & \Op_h^w \Bigg( \Big( 
\frac{i h}{2} \sigma ( D_x , D_\xi; D_y , D_\eta) \Big) (q_1(x, \xi)
a(y, \eta) - q_1(y, \eta) a ( x , \xi) ) |_{ x = y , \xi = \eta} \\
& + e (
x, \xi ) + r_3(x, \xi)\Bigg) ,
\end{align*}
where $e$ satisfies
\[
\Op_h^w(e)  
\ll h \Op_h(\hamvf (a)).
\]

\end{lemma}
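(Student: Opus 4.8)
The plan is to use the two-parameter symbol calculus from Lemma~\ref{l:err} to organize the expansion of the commutator $[Q_1, a^w]$ in the $h$-Weyl quantization, noting at the outset that $q_1 = \xi^2 + V_h \in \s_{0,0}$ in the blown-up coordinates $X, \Xi$ (the potential $V_h$ is built from $V_0$, which is smooth, plus the $\GG^0_\tau$ bump $W_h$ rescaled at scale $x_0$, plus $h^2 V_1$, and all of these produce symbols with the asserted derivative bounds in $X, \Xi$), while $a = \Lambda(\Xi)\Lambda(X)\chi(x)\chi(\xi) \in \s_{0,0}$ as already verified in the text. The principal term of the Moyal bracket is exactly $\frac{ih}{2}\sigma(D)(q_1 a - a q_1)|_{\text{diag}}$, which up to the factor $h$ is $\hamvf(a)$ (the Poisson bracket), and this was analyzed in the preceding pages; the content of the Lemma is to isolate the genuine error $e$ coming from the tail $e_N$ of \eqref{eq:weylc} together with any second-order Moyal term $r_3$ that survives after truncation, and to show $\Op_h^w(e) \ll h\,\Op_h(\hamvf(a))$.

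First I would apply \eqref{eq:abc} with $a \in \s_{\alpha,\beta}$, $b = q_1 \in \s_{\alpha,\beta}$ (since $q_1$ is a symbol in the calculus, not merely in $\s$, but the sharper form of Lemma~\ref{l:err} applies verbatim), and with $N = 1$ or $N=2$: the truncation produces the displayed principal term plus the Moyal correction $r_3$ of one higher order in $h$, plus a remainder of size $\O_{\s_{\alpha,\beta}}(h^{N+1}\max\{(\th/h)^{(N+1)\alpha}, (\th/h)^{(N+1)\beta}\})$. Next I would rescale to the $\th$-calculus via the unitary $T_{h,\th}$ and the blowdown map $\B$ of \eqref{blowdown}--\eqref{rescaledquantization}, so that the error symbol becomes a genuine $\s_{0,0}$ symbol whose $\th$-quantization I can estimate. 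The key arithmetic is to compare the power of $h$ (and $\th$, and $x_0$) appearing in the remainder against the lower bound already established for $h\,\Op_h(\hamvf(a)) \geq c\,\Gamma(h)\,x_0^{-3-\epsilon_0 + (1+\epsilon_0)/m}\,(\text{power of }h,\th)\,\|u\|^2$ on the relevant microlocal set; with the values $\Gamma = h^{2m^2/(m^2-1) + (1+\epsilon_0)/(m+1)}\th^{\cdots}$ fixed in the text, and with $\alpha = 1/(m+1)$ small for $m$ large, each higher-order commutator term carries an extra factor of $h$ modulated only by bounded powers of $(\th/h)^\alpha$, and since $\th$ will ultimately be fixed with $h \ll \th$, these gain in $h$ relative to the principal term.

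The main obstacle I expect is bookkeeping rather than conceptual: the symbol $a$ is a $0$-order symbol in the blown-up variables but only because the rescaling absorbs the singular factors $(h/\th)^{-\alpha}$, so each $\p_x, \p_\xi$ derivative in the Moyal expansion costs a factor $(\th/h)^\alpha$ or $(\th/h)^{m\alpha}$, and one has to check that after multiplying by the overall $h$ from the commutator and the $h(h/\th)^{(m-1)/(m+1)}$ prefactor, the net power of $h$ in $e$ (and in $r_3$) still strictly beats $2 + \O(m^{-1})$ — that is, that $r_3$ and $e$ are $o$ of the principal commutator lower bound uniformly as $h \to 0$ with $\th$ fixed. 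I would handle $r_3$ by noting it is exactly the next Moyal term $\tfrac12(\tfrac{ih}{2}\sigma(D))^2(q_1 a - a q_1)|_{\text{diag}}$, which vanishes by symmetry considerations or is of the form $h^2 \times (\text{bounded }\s_{0,0}\text{ symbol in }\th\text{-calculus after rescaling})$, hence dominated; and I would handle $e = e_N$ by the explicit bound \eqref{eq:new1}, choosing $N$ large enough that $h^{N+1}(\th/h)^{(N+1)\max(\alpha,\beta)}$, after the $T_{h,\th}$-rescaling and comparison with $\Gamma x_0^{-3-\epsilon_0+\cdots}$, is $\O(h^{(N-2)m/(m+1)-\delta}\th^{2m/(m+1)})$-type and absorbable exactly as the $g_2, g_3$ errors were in Lemma~\ref{L:g2g3}. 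The conclusion $\Op_h^w(e) \ll h\,\Op_h(\hamvf(a))$ then follows by combining these estimates with the already-established lower bound on $h\,\Op_h(\hamvf(a))$.
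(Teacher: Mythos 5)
There is a genuine gap, and it sits precisely at the step you dismiss as ``bookkeeping rather than conceptual.'' The absorbability check cannot be done by crude power counting in $h$, no matter how large you take $N$ in \eqref{eq:abc}. The established lower bound on the principal term is only $\lll h\Op_h(\hamvf(a))v,v\rrr \gtrsim \Gamma\, x_0^{-3-\epsilon_0+(1+\epsilon_0)/m}\|v\|^2 \sim h^{2+\O(1/m)}\|v\|^2$ (with $x_0^{-1}$ growing slower than any $h^{-\delta}$), whereas the first non-vanishing correction, the $k=3$ Moyal term, has sup-norm of size $h^{3\alpha}\tilde{h}^{3\beta}\sup|\partial_x^3V_h| \sim h^{3/(m+1)}\tilde{h}^{3m/(m+1)}$, which for the large $m$ needed here is \emph{much bigger} than $h^{2+\eta}$; the same is true of every intermediate term $3\le k\lesssim 2m$, and even a clean $\O(h^2)$ error could not be absorbed. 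So your claims that ``each higher-order commutator term carries an extra factor of $h$ \ldots and gains in $h$ relative to the principal term'' and that taking $N$ large makes $h^{N+1}(\tilde{h}/h)^{(N+1)\max(\alpha,\beta)}$ absorbable do not close the argument (note also that the surviving terms carry $(\tilde{h}/h)^{k\beta}$ with $\beta=m/(m+1)$ close to $1$, not $(\tilde{h}/h)^{k\alpha}$). What the paper actually does, and what is unavoidable, is structural: since $q_1=\xi^2+V_h(x)$ is quadratic in $\xi$, the error reduces to $D_x^3q_1\,D_\eta^3a\propto V_h'''$; one splits $V_h'''=V'''+W_h'''$, bounds the $W_h$ part by $C\Gamma h^{3\alpha}\tilde{h}^{3\beta}x_0^{-3}$, and — this is the heart of the lemma and the reason the $\GG^0_\tau$ hypothesis appears at all — uses the 0-Gevrey condition to dominate $|V'''|\lesssim |x|^{-2\tau}|V_0'|$ on $|X|\ge (h/\tilde{h})^{-\alpha+\epsilon_1}$, together with a case analysis in $\Xi$ to reinsert the weight $\lll\Xi\rrr^{-1-\epsilon_0}$, so that the error symbol is pointwise $o(1)$ times the $V_{0,h}'\Lambda(X)\lll\Xi\rrr^{-1-\epsilon_0}$ piece of $\hamvf(a)$; on the complementary inner region one uses $V'''=\O(|x|^\infty)$, hence $\O(h^\infty)$. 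Your proposal never performs this pointwise comparison (the 0-Gevrey class enters your argument only to assert symbol-class membership), so the inequality $\Op_h^w(e)\ll h\Op_h(\hamvf(a))$ is not established.

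Two smaller corrections. First, $r_3$ in the lemma is not ``the next Moyal term'': the $h^2$ term vanishes identically by Weyl parity (your ``symmetry'' alternative is the right one; the fallback ``$h^2\times$ bounded, hence dominated'' is false for the reason above), and $r_3$ is the remainder supported in $\{|x|>\delta_1\}\cup\{|\xi|>\delta_1\}$ coming from derivatives of the cutoffs $\chi(x)\chi(\xi)$, which is harmless later only because $\varphi$ is supported where $\chi(x)\chi(\xi)\equiv1$. Second, the conjugation by $T_{h,\tilde{h}}$ and the use of \eqref{eq:new1} are indeed the right framework, and your identification of the principal Moyal term with $h\,\hamvf(a)$ is correct; it is only the absorption mechanism for $e$ that is missing.
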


\begin{proof}
Since everything is in the Weyl calculus, only the odd terms in the
exponential composition expansion are non-zero.  Hence the $h^2$ term
is zero in the Weyl expansion.  
Now according to Lemma \ref{l:err} and the standard $L^2$ continuity
theorem for $h$-pseudodifferential operators, we need to estimate a
finite number of 
derivatives of the error:
\begin{equation}
\label{E:error-10001}
 | \partial^{\gamma} e_2 | \leq C h^{3}
\sum_{ \gamma_1 + \gamma_2 = \gamma } 
 \sup_{ 
{{( x, \xi) \in T^* \RR }
\atop{ ( y , \eta) \in T^* \RR }}} \sup_{
|\rho | \leq M  \,, \rho \in \NN^{4} }
\left|
\Gamma_{\alpha, \beta, \rho, \gamma}(D)
( \sigma ( D) ) ^{3} q_1 ( x , \xi)  
a ( y, \eta ) 
\right| 
.
\end{equation}
However, since $q_1(x, \xi) = \xi^2 + V_{0,h}(x)$, we have
\[
D_x D_\xi q_1 = D_{\xi}^3 q_1 = 0,
\]
so that
\begin{align*}
\sigma(D)^3 & q_1(x, \xi) a(y, \eta) |_{ x = y , \xi = \eta} \\
& = D_x^3 q_1 D_\eta^3 a |_{ x = y , \xi = \eta} \\
& = -V_h'''(x) (\th/h)^{3m/(m+1)} \Lambda'''
((\th/h)^{m/(m+1)} \eta) \\
& \quad \times \Lambda((\th/h)^{1/(m+1)} y) \chi(y)
\chi(\eta)
+ r_3,
\end{align*}
where $r_3$ is supported in $\{ | (x, \xi) | \geq \delta_1 \}$.  
Owing to the cutoffs $\chi(y) \chi(\eta)$ in the definition of $a$
(and the corresponding implicit cutoffs in $q_1$), we only need to
estimate this error in compact sets.  The derivatives
$h^{\beta} \partial_\eta$ and $h^\alpha \partial_y$
preserve the order of $e_2$ in $h$ and increase the order in $\th$, while the other derivatives lead
to higher powers in $h/\th$ in the symbol expansion.  Hence we need only estimate $e_2$,
as the derivatives satisfy similar estimates.

In order to estimate $e_2$, we again use conjugation to
the $2$-parameter calculus, and at some point invoke the 0-Gevrey assumption.  We have
\[
\| \Op_h^w(e_2) u \| = \| T_{h, \th} \Op_h^w(e_2) T_{h, \th}^{-1}
T_{h, \th} u \| \leq \| T_{h, \th} \Op_h^w(e_2) T_{h, \th}^{-1}
\|_{L^2 \to L^2} \| u \|,
\]
by unitarity of $T_{h, \th}$.  But $T_{h, \th} \Op_h^w(e_2) T_{h,
  \th}^{-1} = \Op_{\th}^w(e_2 \circ \B )$ and
\begin{align*}
e_2 \circ \B & =  -h^3 V'''_h((h/\th)^{1/(m+1)}X) (\th/h)^{3m/(m+1)} \Lambda'''
(\Xi) \\
& \quad \times \Lambda(X) \chi(x)
\chi(\xi) + r_3 \circ \B  ,
\end{align*}
where $r_3$ is again microsupported away from the critical point
(coming from the derivatives on $\chi(x) \chi(\xi)$.  
We recall that $V_h'''(x) = V'''(x) + W_h'''(x)$, where $W_h(x) =
\Gamma(h) f(x/x_0)$.  As $f \in \Ci_c$, we know that 
\[
|W_h'''(x)| \leq C \Gamma x_0^{-3},
\]
and hence
\begin{align*}
| h^3 (\th/h)^{3m/(m+1)} & \Lambda'''
(\Xi) \Lambda(X) W_h''' ((h/\th)^{1/(m+1)}X)  (\chi(x)
\chi(\xi) | \\
& \leq C \Gamma h^{3/(m+1)} \th^{3m/(m+1)} x_0^{-3}.
\end{align*}

As for $V$, since $V' \in \GG^0_\tau$, for $x$ close to $0$
satisfying (in the rescaled coordinates)
\[
| X | \geq \left( \frac{h}{\th} \right)^{-1/(m+1) + \epsilon_1}, \,\,\,
\epsilon_1 >0, 
\]
we have
\begin{align}
| &h^{3/(m+1)} \th^{3m/(m+1)}  V'''((h/\th)^{1/(m+1)} X ) | \label{E:V-3-der}\\
& \leq C h^{3/(m+1)} \th^{3m/(m+1)} 
\left| \left( \frac{h}{\th} \right)^{1/(m+1)} X \right|^{-2 \tau} | V_0'
((h/\th)^{1/(m+1)} X ) | \notag \\
& \ll h^{2/(m+1) + \gamma} \th^{3m/(m+1)} |  V_0'
((h/\th)^{1/(m+1)} X ) | \notag
\end{align}
provided
\[
2 \tau \epsilon_1 \leq \frac{1}{m+1} - \gamma,
\]
for $\gamma>0$ (which of course implies we must have $\gamma <
1/(m+1)$).  
This can clearly be done for any $\tau < \infty$ by taking
$\epsilon_1>0$ sufficiently small.

We need to estimate \eqref{E:V-3-der} in terms of $V_0'(X) \lll \Xi
\rrr^{-1-\epsilon_0}$ as $(x, \xi)$ {\it and} $(y, \eta)$ vary in \eqref{E:error-10001}.
That means we need to worry about large $| \Xi|$.  If $| \Xi | \leq
\delta_1/2$, say, then \eqref{E:V-3-der} is trivially bounded by 
\[
h^{2/(m+1) + \gamma} \th^{3m/(m+1)} |  V_0'
((h/\th)^{1/(m+1)} X ) | \lll \Xi \rrr^{-1-\epsilon_0} .
\]
If 
\[
| \Xi | \geq \max \{ | X |^{1+ \epsilon_0}, \delta_1/2\},
\]
then the function $g_1 \geq c_{\delta_1}$, so there is nothing to
prove in this region.  On the other hand, if
\[
\frac{\delta_1}{2} \leq | \Xi | \leq | X|^{1 + \epsilon_0},
\]
then
\[
| \Xi |^{-1} \geq | X |^{-1-\epsilon_0},
\]
so that
\[
\lll \Xi \rrr^{-1-\epsilon_0} \geq \lll X \rrr^{-(1 + \epsilon_0)^2}
\geq \left( \frac{h}{\th} \right)^{\alpha (1 + \epsilon_0)^2}.
\]
Then \eqref{E:V-3-der} is bounded by
\begin{align*}
& C h^{2/(m+1) + \gamma} \th^{3m/(m+1)} |  V_0'
((h/\th)^{1/(m+1)} X ) \left( \frac{h}{\th} \right)^{-\alpha (1 +
  \epsilon_0)^2}| \lll \Xi \rrr^{-1-\epsilon_0} \\
& \ll C h^{1/(m+1)} \th^{m/(m+1)}  |  V_0'
((h/\th)^{1/(m+1)} X ) \lll \Xi \rrr^{-1-\epsilon_0} ,
\end{align*}
provided
\[
\frac{(1 + \epsilon_0)^2}{m+1} < \frac{1}{m+1} + \gamma,
\]
which is possible since we have already determined $\epsilon_0 \ll
1/(m+1)$ and the only restriction on $\gamma$ was $\gamma < 1/(m+1)$.

On the other hand, we
have for 
\[
| X | \leq \left( \frac{h}{\th} \right)^{-1/(m+1) + \epsilon_1}, \,\,\,
\epsilon_1 >0, 
\]
since $V'''(x) = \O(|x|^\infty)$, then
\[
| V''' ((h/\th)^{1/(m+1)} X ) | = \O(h^\infty).
\]

The error term must be estimated in terms of $h \hamvf (a)$.  Recall
that $| \Lambda'''(\Xi ) | \leq C \lll \Xi \rrr^{-1-\epsilon_0}$, so 
 we have
shown that the error is always controlled by
\[
o(h^{1/(m+1)} \th^{m/(m+1)} ) \lll \Xi \rrr^{-1-\epsilon_0} |\Lambda (
X) V_{0,h}' ((h/\th)^{1/(m+1)} X ) | + \O
(h^\infty) \ll h \hamvf (a).
\]

\end{proof}

Finally, we are able to put things together.  
Let $v=\varphi^w u,$ with $\varphi$ chosen to have support inside the
set where $\chi(x)\chi(\xi)=1;$ thus the terms $r$ and $r_3$ above are supported
away from the support of $\varphi.$  Then
Lemma \ref{L:Q-comm-error-3a} yields
\begin{align*}
& i\ang{[Q_1-z,a^w]v,v}\\
& \quad =h\ang{\Op_h^w(\hamvf(a))v,v}+\ang{\Op_h^w(e_2)u,u}
\\
&\quad \geq C \Gamma x_0^{ -3 -
  \epsilon_0 + (1 +
  \epsilon_0)/m  } \norm{v}^2 \\
& \quad = 
C   h^{ 2m^2/(m^2-1) +  3 \epsilon_0 /(m^2-m)} \th^{m/(m+1) - 3
  \epsilon_0 /(m^2-m)} x_0^{ -3 -
  \epsilon_0 + (1 +
  \epsilon_0)/m  }\norm{v}^2,
\end{align*}
 for
$\th$ sufficiently small.  Here we have used the previously computed
value of $\Gamma$.  
On the other hand, we certainly have
$$
\big\lvert \ang{[Q_1-z,a^w]v,v}\big\rvert \leq C \norm{(Q_1-z)v}\norm{v},
$$
hence 
\[
\norm{(Q_1-z)v} \geq C \Gamma x_0^{ -3 -
  \epsilon_0 + (1 +
  \epsilon_0)/m  } \norm{v}.
\]
We need yet compare $\tQ$ to $Q_1$:
\begin{align*}
& \norm{v} \\
& \leq C \Gamma^{-1} x_0^{ 3 +
  \epsilon_0 - (1 +
  \epsilon_0)/m  } \norm{(Q_1-z)v} \\
& \leq C \Gamma^{-1} x_0^{ 3 +
  \epsilon_0 - (1 +
  \epsilon_0)/m  } \left( \norm{(\tQ -z) v } +
    \norm{(V_{0,h} - V_0 ) v } \right) \\
& \leq C \Gamma^{-1} x_0^{ 3 +
  \epsilon_0 - (1 +
  \epsilon_0)/m  } \left( \norm{(\tQ -z) v } +
     \Gamma(h) \norm{v } \right) \\
& \leq C \Gamma^{-1} x_0^{ 3 +
  \epsilon_0 - (1 +
  \epsilon_0)/m  } \norm{(\tQ -z) v } +
    o(1) \norm{v } 
\end{align*}
provided that again $\epsilon_0$ is sufficiently small and $m$ is
sufficiently large.
Then the term with $\norm{v}$ can be moved to the
left hand side to get (now freezing $\th$ small and positive)
\begin{align*}
\norm{v} & \leq C \Gamma^{-1} x_0^{ 3 +
  \epsilon_0 - (1 +
  \epsilon_0)/m  } \norm{(\tQ -z) v } \\
& \leq  C 
h^{ -2m^2/(m^2-1) -  (1+ \epsilon_0) /(m+1)} \norm{(\tQ -z) v }
\\
& = C h^{-2-\eta} \norm{(\tQ -z) v }
\end{align*}
for $\eta = \O(m^{-1})$.  
This is \eqref{E:ml-inv-3a}.

Lastly, we show how to modify the preceding argument in the case of
Proposition \ref{P:ml-inv-3b}.  The main point is that the nonlinear
rescaling in $\Gamma$ (as part of $\lambda$) allows us to use that
$\Gamma^{1/(m+1)} \gg \Gamma$.  The first step is to modify the function $f$
and subsequently $W_h$ and $V_{0,h}$.  Since $V_0(x) \equiv 1$ on an
interval $x \in [-a,a]$, with $\pm V_0'(x) <0$ for $\pm x > a$, we
choose the point $x_0>0$ so that
\[
- x V_0'( x ) \geq \frac{ h}{\varpi(h)}, \,\, \, a+x_0  \leq  x  \leq
a + \epsilon,
\]
and similarly for $-a-\epsilon \leq x \leq -a-x_0$.  
Again we can assume that $| x_0 - a| = o(1)$.  Then choose $f \in \Ci_c(\reals) \cap \GG^0_\tau$ for some $\tau
< \infty$, with $f(x) = 1 - \frac{1}{2m}
x^{2m}$ for $| x | \leq a + x_0 $, and $f' (x) \leq 0$ or $x \geq 0$, $\supp
f \subset [-a-2x_0, a + 2x_0]$, satisfying
\[
| \partial_x^k f | \leq C_k |x_0|^{-k}.
\]
For
our next parameter, set $\tGamma(h) = c_0 \Gamma(h)$ for a small
constant $c_0>0$ to be determined, and $\Gamma(h)$ the parameter
computed in the case of the isolated infinitely degenerate maximum.  As before then we take 
\[
W_h(x) = \tGamma(h) f(x),
\]
and let
\[
V_{0,h}(x) = V_0(x) + W_h(x)
\]
We then follow the same arguments as in the proofs of Proposition
\ref{P:ml-inv-1} and \ref{P:ml-inv-3a}, noting that the ``smallness''
assumption on the support of the microlocal cutoff $\phi$ in the $x$
direction was to control lower order terms in Taylor expansions.  As
the function $V_0$ is constant and $f(x) = 1-x^{2m}$ on $[-a,a]$, the smallness assumption
translates into a small neighbourhood around $[-a,a]$.  Hence in Proposition
\ref{P:ml-inv-3b} we have assumed that $\supp \phi \subset
[-a-\epsilon, a + \epsilon]$.  All of the error terms are treated
similarly to the preceding proof.  The only changes to check are that,
since $f$ is no longer a function of $x/x_0$, and $\tGamma(h) = c_0
\Gamma$, we need to solve (in the previous notation):
\[
h (h/\th)^{(m-1)/(m+1)} \tGamma^{1/m} (h/\th)^{ \alpha(1+\epsilon_0 )
  ( m-1)/m } \th^{2(m-1)/m} \gg \tGamma,
\]
or
\[
h (h/\th)^{(m-1)/(m+1)}  (h/\th)^{ \alpha(1+\epsilon_0 ) ( m-1)/m   } \th^{2(m-1)/m} \gg (c_0\Gamma)^{(m-1)/m},
\]
which is true with our previous choice of $\Gamma$, provided $c_0>0$
is sufficiently small and independent of $h$.

As previously, we then have
\[
\norm{(Q_1-z)v} \geq C^{-1} c_0^{1/m} h^{ 2m^2/(m^2-1) +  3 \epsilon_0 /(m^2-m)} \th^{m/(m+1) - 3
  \epsilon_0 /(m^2-m)}  \norm{v}.
\]
In order to save some space, let us denote
\[
\omega(h) = h^{ 2m^2/(m^2-1) +  3 \epsilon_0 /(m^2-m)} \th^{m/(m+1) - 3
  \epsilon_0 /(m^2-m)}  .
\]
Comparing $\tQ$ to $Q_1$ now yields:
\begin{align*}
\norm{v} & \leq C c_0^{-1/m} \omega(h)^{-1}
    \norm{(Q_1-z)v} \\
& \leq C c_0^{-1/m} \omega(h)^{-1}\left( \norm{(\tQ -z) v } +
    \norm{(V_{0,h} - V_0 ) v } \right) \\
& \leq C c_0^{-1/m} \omega(h)^{-1}\left( \norm{(\tQ -z) v } + C c_0 \omega(h)
     \norm{v } \right) \\
& \leq C c_0^{-1/m} \omega(h)^{-1} \norm{(\tQ -z) v } +
   Cc_0^{(m-1)/m} \norm{v } .
\end{align*}
Freezing $\th>0$ and $c_0>0$ sufficiently small, 
the term with $\norm{v}$ can be moved to the
left hand side to get
\begin{align*}
\norm{v} \leq C h^{-2-\eta} \norm{(\tQ -z) v } \\
\end{align*}
with $\eta = \O(m^{-1})$ once again.  This is \eqref{E:ml-inv-3b}.

\end{proof}

\subsection{Infinitely degenerate and cylindrical inflection
  transmission trapping}

\label{SS:inf-deg-infl}

In this subsection, we study the microlocal spectral theory in a
neighbourhood of infinitely degenerate and cylindrical inflection
transmission trapping.  This is very similar to Subsection \ref{SS:unst-inf},
but now the potential is assumed to be monotonic in a neighbourhood of
the critical value.

We begin with the case where the potential has an isolated
infinitely degenerate critical point of inflection transmission type.
As in the previous subsection, we write $V(x) = A^{-2}(x) + h^2
V_1(x)$ and denote $V_0(x) = A^{-2}(x)$ to be the principal part of
the potential.  
Let us assume the point $x = 1$ is
an infinitely degenerate inflection point, so that locally near $x = 1$,
the potential takes the form
\[
V_0(x) \sim C_1^{-1} - (x-1)^{\infty}, 
\]
where $C_1>1$.  Of course the constant is arbitrary
(chosen to again agree with those in \cite{ChMe-lsm}).  Let us assume
that our potential satisfies $V_0'(x) \leq 0$ near $x = 1$, with
$V_0'(x)<0$ for $x \neq 1$ so that the critical point $x = 1$ is isolated.
The next Proposition says that in this case the microlocal resolvent is
bounded by $\O(h^{-2-\eta})$ for any $\eta>0$.  Let 
\[
\tQ = (hD_x)^2 + V(x) -z.
\]

\begin{proposition}
\label{P:ml-inv-4a}
For $\epsilon>0$ sufficiently small, let $\phi \in \s(T^* \reals)$
have compact support in $\{ |(x-1,\xi) |\leq \epsilon\}$.  Then for
any $\eta>0$, there
exists $C = C_{\epsilon,\eta}>0$ such that 
\begin{equation}
\label{E:ml-inv-4a}
\| \tQ \phi^w u \| \geq C_\epsilon {h^{2 + \eta}} \|
\phi^w u \|, \,\,\, z \in  [C_1^{-1}-\epsilon, C_1^{-1} + \epsilon].
\end{equation}
\end{proposition}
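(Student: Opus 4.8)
The plan is to follow the proof of Proposition~\ref{P:ml-inv-3a} essentially line by line, with two changes: the $h$-dependent bump we graft onto $V_0$ near $x=1$ is now a \emph{finitely degenerate inflection transmission} profile instead of a finitely degenerate maximum, and the role played there by Proposition~\ref{P:ml-inv-1} is now played by the positive commutator argument of \cite{ChMe-lsm} underlying Proposition~\ref{P:ml-inv-2}, which is exactly the argument this subsection revisits. Since $V_0'$ vanishes to infinite order at $x=1$ while $V_0'(x)<0$ for $x\neq 1$ near $1$, I would first choose $x_0=x_0(h)>0$ --- the analogue of the cutoff scale in the proof of Proposition~\ref{P:ml-inv-3a} --- as the smallest point with $-|x-1|\,V_0'(x)\geq h/\varpi(h)$ on $x_0\leq |x-1|\leq\epsilon$, where $\varpi(h)$ is fixed later; exactly as in Subsection~\ref{SS:unst-inf}, $x_0\gg h^\delta$ for every $\delta>0$, and $x_0=o(1)$ once $\varpi(h)\gg h$. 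Fix a large integer $m_2$, a parameter $\Gamma(h)>0$ to be determined, and a fixed profile $f\in\Ci_c([-2,2])\cap\GG^0_\tau$ with $f(x)=1-c_2x^{2m_2+1}$ near $x=0$ (and the sign and monotonicity conventions of \cite{ChMe-lsm}); set $W_h(x)=\Gamma(h)f((x-1)/x_0)$, $V_{0,h}=V_0+W_h$, $V_h=V+W_h$, and $Q_1=(hD_x)^2+V_h$. Because $V_0$ is flat to infinite order at $x=1$, the perturbed potential $V_{0,h}$ has a genuine finitely degenerate inflection of order $2m_2+1$ there, up to an $\O(h^\infty)$ correction, so the machinery of \cite{ChMe-lsm} applies to $Q_1$.

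Next I would run that positive commutator argument, tracking the perturbation amplitude $\Gamma(h)$ and the width $x_0$. With $\alpha=1/(2m_2+3)$, introduce the blow-up $X=(x-1)/(h/\th)^{2\alpha}$, $\Xi=\xi/(h/\th)^{(2m_2+1)\alpha}$ adapted to a critical point of order $2m_2+1$ (the marginal inhomogeneous scaling $\alpha+\beta=1$), together with the (non-symmetric) commutant $a$ of \cite{ChMe-lsm} appropriate to a monotone potential, built from the zero order symbol $\Lambda$ and the cutoffs $\chi$ as in the proof of Proposition~\ref{P:ml-inv-3a}. Computing $\hamvf(a)$ in these coordinates, performing the further $L^2$-unitary rescaling $X'=\lambda X$, $\Xi'=\lambda^{-1}\Xi$, and choosing $\lambda$ to balance the two competing terms in the principal part $g_1$ of $\hamvf(a)$ as in \eqref{E:g1-1}--\eqref{E:g1-3}, one obtains, on the relevant semiclassical wavefront set, a lower bound for $\langle\Op_\th(g_1)u,u\rangle$ equal to a positive power of $\Gamma$ times $(h/\th)$- and $x_0$-powers times $\th^{(4m_2+2)/(2m_2+3)}\|u\|^2$; on the complementary region $\hamvf(a)$ is bounded below using $-|x-1|\,V_0'(x)\geq h/\varpi(h)$, giving the analogue of \eqref{E:tV-controls}.

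The error terms are handled exactly as in Subsection~\ref{SS:unst-inf}. The contributions $g_2$ and $g_3$ coming from replacing $V_h'$ by $W_h'$, respectively by $V_0'$, are semibounded below modulo $o(h^{-2-\eta}\th^{(4m_2+2)/(2m_2+3)})$ by the argument of Lemma~\ref{L:g2g3}: one picks auxiliary points $x_1<x_2$ near $x=1$ with $-|x_j-1|\,V_0'(x_j)$ equal to suitable powers of $h$, invokes the $0$-Gevrey condition together with monotonicity to force $|x_2-x_1|\gg h^\delta$ for all $\delta>0$, splits with a cutoff $\tpsi$ adapted to $[x_1,x_2]$, and uses that $h^2V_1'$ is dominated by $V_0'$ on its support. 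The lower order terms in the $h$-Weyl commutator expansion of $[Q_1,a^w]$ are controlled as in Lemma~\ref{L:Q-comm-error-3a}: only the odd terms survive in the Weyl calculus, the $h^3$ term involves $V_h'''=V'''+W_h'''$ with $W_h'''=\O(\Gamma x_0^{-3})$, and the $0$-Gevrey bound $|V'''(x)|\leq C|x-1|^{-2\tau}|V_0'(x)|$ together with $|\Lambda'''(\Xi)|\lesssim\langle\Xi\rangle^{-1-\epsilon_0}$ makes this error $\ll h\hamvf(a)$.

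Finally I would fix the free parameters. Matching the commutator lower bound against $\Gamma$, and the $g_1$ bound against the bound coming from $-|x-1|\,V_0'(x)\geq h/\varpi(h)$, determines $\Gamma(h)=h^{2+\eta'}\th^{\cdots}$ with $\eta'=\O(m_2^{-1})$ and a $\varpi(h)$ with $h\ll\varpi(h)=o(1)$ (consistently with the construction of $x_0$); taking $m_2$ large makes $\eta'<\eta$. This yields $\|(Q_1-z)v\|\gtrsim\Gamma\,x_0^{-3-\epsilon_0+(1+\epsilon_0)/m_2}\|v\|$ for $v=\phi^w u$, and then comparing $\tQ$ to $Q_1$ via $\|(V_{0,h}-V_0)v\|\leq\Gamma(h)\|v\|$ --- absorbed into the left side since $x_0=o(1)$ forces $x_0^{-3-\epsilon_0+(1+\epsilon_0)/m_2}\to\infty$ --- gives $\|v\|\leq Ch^{-2-\eta}\|\tQ v\|$ for every $\eta>0$, which is \eqref{E:ml-inv-4a}. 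The step I expect to be the main obstacle is exactly this parameter bookkeeping: one has to verify that the exponents of $h$, $\th$ and $x_0$ appearing in the $g_1$ lower bound, in the $g_2,g_3$ estimates, in the commutator error, and in the comparison of $\tQ$ with $Q_1$ can all be balanced simultaneously while keeping $h/\varpi\gg h^2$ (so $h^2V_1'$ is dominated by $V_0'$) and $h/\varpi=o(1)$ (so $x_0=o(1)$) --- and, as throughout Subsection~\ref{SS:unst-inf}, this is where the finiteness of the $0$-Gevrey order $\tau$ is indispensable, to absorb both the sub-potential and the third-derivative error near the infinitely degenerate point.
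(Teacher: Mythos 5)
Your proposal follows the paper's own proof of Proposition \ref{P:ml-inv-4a} essentially step for step: an $h$-dependent finitely degenerate inflection bump $W_h=\Gamma(h)f((x-1)/x_0)$ with amplitude $\Gamma(h)\sim h^{2+\O(m_2^{-1})}$ and width $x_0(h)$ set by a threshold on $-V_0'$, the marginal two-parameter blowup with the non-symmetric commutant built from $\Lambda_1,\Lambda_2$, the semiboundedness of the errors $g_2,g_3$ and the Weyl-remainder via the 0-Gevrey class, the matching of $\Gamma$ and $\varpi$, and the final comparison of $\tQ$ with $Q_1$ using $|V_{0,h}-V_0|\leq\Gamma$. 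The only deviations --- your marginal scaling exponents $2/(2m_2+3)$, $(2m_2+1)/(2m_2+3)$ versus the paper's $1/(m_2+1)$, $m_2/(m_2+1)$, the extra factor $|x-1|$ in the threshold defining $x_0$, and the constant shift in your bump profile $f$ --- are harmless bookkeeping choices that do not alter the argument or the conclusion.
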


On the other hand, if $V_0'(x) \equiv 0$ on an interval, say $x-1 \in
[-a,a]$ with $V_0'(x) < 0$ for $x -1< -a$ and $x -1 > a$, we do not expect
anything better than Proposition \ref{P:ml-inv-4a}.  The
next Proposition says that this is exactly what we do get.  To fix an energy
level, assume $V_0 \equiv
C_1^{-1}$ on $[-a,a]$.  We again write
\[
\tQ = (hD_x)^2 + V(x) -z.
\]
\begin{proposition}
\label{P:ml-inv-4b}
For $\epsilon>0$ sufficiently small, let $\phi \in \s(T^* \reals)$
have compact support in $\{ |x-1| \leq a + \epsilon, \, |\xi| \leq \epsilon\}$.  Then for
any $\eta>0$, there
exists $C = C_{\epsilon,\eta}>0$ such that 
\begin{equation}
\label{E:ml-inv-4b}
\| \tQ \phi^w u \| \geq C_\epsilon {h^{2 + \eta}} \|
\phi^w u \|, \,\,\, z \in  [C_1^{-1}-\epsilon, C_1^{-1} + \epsilon].
\end{equation}
\end{proposition}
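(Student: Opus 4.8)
The plan is to prove Propositions~\ref{P:ml-inv-4a} and~\ref{P:ml-inv-4b} together, following the scheme of the proofs of Propositions~\ref{P:ml-inv-3a} and~\ref{P:ml-inv-3b}, with the finitely degenerate \emph{inflection} estimate of Proposition~\ref{P:ml-inv-2} playing the role that the finitely degenerate \emph{maximum} estimate of Proposition~\ref{P:ml-inv-1} played there. The idea is again to add a small $h$-dependent perturbation $W_h$ that turns the infinitely degenerate inflection at $x=1$ into a finitely degenerate inflection of some large odd order $2m_2+1$, to rerun the inflection-adapted positive commutator argument of~\cite{ChMe-lsm} for the perturbed operator, and to track carefully the loss coming from the size $\tGamma(h)$ of $W_h$. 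Recall that this commutator argument uses the two-parameter blow-up $X=(x-1)/(h/\th)^{2/(2m_2+3)}$, $\Xi=\xi/(h/\th)^{(2m_2+1)/(2m_2+3)}$ (the two calculus exponents sum to $1$, as required), the singular commutant $a=\Lambda(\Xi)\Lambda(X)\chi(x)\chi(\xi)$ of~\cite{Chr-NC,Chr-QMNC,ChWu-lsm}, and --- as in Subsection~\ref{SS:unst-inf} --- a further nonlinear $L^2$-unitary rescaling $X'=\lambda X$, $\Xi'=\lambda^{-1}\Xi$ with $\lambda=\lambda(h)$ to be optimized; the leading positive contribution to $\hamvf(a)$ is then governed by $\tGamma\, x_0^{-2m_2-1}\Lambda(X)X^{2m_2}\ang{\Xi}^{-1-\ep_0}$ together with its analogue near the degenerate turning point, and the sign of the transmission term is handled by the $\Lambda$'s exactly as in~\cite{ChMe-lsm}.

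For Proposition~\ref{P:ml-inv-4a} I would follow Subsection~\ref{SS:unst-inf} almost verbatim. Since $V_0'\leq0$ near $x=1$, vanishes to infinite order at $x=1$, and is $<0$ for $x\neq1$, for any $\varpi(h)\gg h$ there is a smallest $x_0=x_0(h)$, necessarily $o(1)$ and $\gg h^\delta$ for every $\delta>0$, with $-(x-1)V_0'(x)\geq h/\varpi(h)$ for $x_0\leq|x-1|\leq\epsilon$. Fix a large odd $2m_2+1$ and an odd $f\in\Ci_c([-2,2])\cap\GG^0_\tau$ with $f(y)=-\frac{1}{2m_2+1}y^{2m_2+1}$ near $y=0$ and $f'\leq0$, and set $W_h(x)=\tGamma(h)f((x-1)/x_0)$, $V_h=V+W_h$, $V_{0,h}=V_0+W_h$. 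Because $W_h'$ vanishes to order $2m_2$ at $x=1$ while $V_0'$ vanishes to infinite order, $V_{0,h}$ is monotone near $x=1$ with a finitely degenerate inflection of order $2m_2+1$ there at energy $C_1^{-1}$, and $|V-V_h|\leq\tGamma(h)$. For $Q_1=(hD_x)^2+V_h-z$ I would reproduce the chain of lower bounds~\eqref{E:g1-1}--\eqref{E:tV-controls} for $\hamvf(a)$ and its $\th$-quantization, optimize the minimum over $\lambda$ as in the computation fixing $\Gamma$ in Subsection~\ref{SS:unst-inf}, match lower bounds to choose $\varpi(h)$ with $h/\varpi(h)=o(1)$ --- hence $h/\varpi\gg h^2$, so that $h^2V_1'$ is dominated by $V_0'$ on $\{|x-1|\geq x_0\}$ --- and choose $\tGamma(h)=h^{2+\O(m_2^{-1})}$ so that the commutator lower bound $h\,\Op_h(\hamvf(a))$ is bounded below by $c\,\tGamma(h)\,x_0^{-3-\ep_0+(1+\ep_0)/m_2}$, which dominates $\tGamma(h)$ itself. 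The error terms --- the analogues of $g_2,g_3$ (coming from estimating with $W_h'$ rather than $V_h'$ and $V'$) and the lower-order Weyl commutator terms, in particular the one involving $V_h'''$ --- are controlled as in Lemmas~\ref{L:g2g3} and~\ref{L:Q-comm-error-3a}; this is precisely where the $\GG^0_\tau$ hypothesis is used, both to absorb $h^2V_1'$ in the transition region and to bound $V_h'''$ against $V_0'$. This gives $\|(Q_1-z)v\|\geq Ch^{2+\eta}\|v\|$ for $v=\phi^w u$ with $\supp\phi$ inside the set where $\chi(x)\chi(\xi)=1$, and since $\|W_h v\|\leq\tGamma(h)\|v\|$ with $\tGamma(h)$ carrying a small constant, comparing $\tQ$ with $Q_1$ yields~\eqref{E:ml-inv-4a}.

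For Proposition~\ref{P:ml-inv-4b} I would modify this exactly as Proposition~\ref{P:ml-inv-3b} modifies Proposition~\ref{P:ml-inv-3a}, using that the nonlinear rescaling gains a fractional power of $\tGamma$. With $V_0\equiv C_1^{-1}$ on $\{|x-1|\leq a\}$ and $V_0'<0$ for $|x-1|>a$, choose $x_0=o(1)$ with $-(x-1)V_0'(x)\geq h/\varpi(h)$ on $a+x_0\leq|x-1|\leq a+\epsilon$, take $f\in\Ci_c(\reals)\cap\GG^0_\tau$ with $f(y)=-\frac{1}{2m_2+1}y^{2m_2+1}$ for $|y|\leq a+x_0$, $f'\leq0$, $\supp f\subset[-a-2x_0,a+2x_0]$ and $|\p_y^k f|\leq C_k|x_0|^{-k}$, and set $\tGamma(h)=c_0\Gamma(h)$ with $c_0>0$ small and $\Gamma(h)$ the parameter from the isolated case, $W_h(x)=\tGamma(h)f(x-1)$, $V_{0,h}=V_0+W_h$, $V_h=V+W_h$; then $V_{0,h}$ is monotone decreasing on $\{|x-1|\leq a+2x_0\}$ with a single finitely degenerate inflection of order $2m_2+1$ at $x=1$. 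As in the passage from Proposition~\ref{P:ml-inv-3a} to~\ref{P:ml-inv-3b}, the ``smallness'' of the microlocal cutoff was only needed to control Taylor remainders, so it becomes smallness of a neighbourhood of $\{|x-1|\leq a\}$, which is why the hypothesis $\supp\phi\subset\{|x-1|\leq a+\epsilon,\ |\xi|\leq\epsilon\}$ is allowed. The one genuinely new point is to check that, with $f$ now a function of $x-1$ rather than $(x-1)/x_0$ and $\tGamma=c_0\Gamma$, the inequality making the commutator lower bound dominate $\tGamma$ still holds with the previous choice of $\Gamma$ once $c_0>0$ is small; this is straightforward since shrinking $\Gamma$ by a fixed factor only helps. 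One then obtains $\|(Q_1-z)v\|\geq C^{-1}c_0^{\kappa}\,\omega(h)\,\|v\|$ for a suitable exponent $\kappa=\kappa(m_2)\in(0,1)$ and $\omega(h)=h^{2+\O(m_2^{-1})}$, and comparing $\tQ$ with $Q_1$ gives $\|v\|\leq Cc_0^{-\kappa}\omega(h)^{-1}\|(\tQ-z)v\|+Cc_0^{1-\kappa}\|v\|$; freezing $\th>0$ and then $c_0>0$ small absorbs the last term and yields~\eqref{E:ml-inv-4b} with $\eta=\O(m_2^{-1})$.

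The main obstacle is the core commutator estimate for $Q_1$: threading the $\tGamma(h)$-dependence through the inflection-adapted blow-up and the nonlinear rescaling, and choosing $\lambda(h)$, $\tGamma(h)$, $\varpi(h)$, $\ep_0$, $m_2$, $\th$ (and $c_0$ in the cylindrical case) mutually consistently, together with the $\GG^0_\tau$-bookkeeping needed to absorb $h^2V_1'$ and the Weyl remainder $V_h'''$ into the leading positive term. Granting the machinery of~\cite{ChMe-lsm}, everything else is a direct transcription of Subsection~\ref{SS:unst-inf} under the substitutions replacing $2m$ by $2m_2+1$ and $h^{2m/(m+1)}$ by $h^{(4m_2+2)/(2m_2+3)}$.
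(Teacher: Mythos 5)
Your overall strategy is the paper's: perturb $V$ by $W_h=\tGamma(h)f$ so that the infinitely degenerate (or cylindrical) inflection becomes a finitely degenerate inflection of order $2m_2+1$, run the two-parameter positive commutator argument with the extra nonlinear rescaling $X'-1=\lambda(X-1)$, $\Xi'=\lambda^{-1}\Xi$, choose $\lambda,\tGamma,\varpi,\ep_0,m_2,\th$ (and $c_0$ in the cylindrical case) consistently, control $g_2,g_3$ and the Weyl remainder $V_h'''$ via the $\GG^0_\tau$ hypothesis, and finally compare $\tQ$ with $Q_1$, absorbing the $\O(\tGamma)$ (resp. $\O(c_0^{1-\kappa})$) error. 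The parameter bookkeeping, the semi-boundedness lemma for $g_2,g_3$, and the $c_0$-absorption step for Proposition \ref{P:ml-inv-4b} all match the paper.

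However, there is a genuine gap at the heart of the commutator step: you take the commutant $a=\Lambda(\Xi)\Lambda(X)\chi\chi$ from the \emph{maximum} case and assert that the leading positive contribution to $\hamvf(a)$ is $\tGamma\,x_0^{-2m_2-1}\Lambda(X)X^{2m_2}\ang{\Xi}^{-1-\ep_0}$. That term is not positive: since the perturbed potential is \emph{monotone} through the inflection, $V_h'\leq 0$ on both sides of $x=1$, so $-V_h'\,\Lambda(X)\ang{\Xi}^{-1-\ep_0}$ has the sign of $\Lambda(X)$ and is negative, and of full size, on $\{x<1\}$ (in the maximum case this worked only because $V_h'$ changes sign there). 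Citing \cite{ChMe-lsm} for ``the sign of the transmission term'' does not repair the formula you wrote. The paper's fix is to change the commutant to $a=\Lambda_1(\Xi)\Lambda_2(X-1)\chi\chi$, where $\Lambda_2(s)=1+\int_{-\infty}^s\ang{s'}^{-1-\ep_0}\,ds'$ is monotone and bounded below by $1$: then the kinetic term $2\Lambda_1(\Xi)\Xi\,\ang{X-1}^{-1-\ep_0}$ and the potential term $-V_h'\,\Lambda_2(X-1)\ang{\Xi}^{-1-\ep_0}$ (which near $x=1$ is $\tGamma x_0^{-2m_2-1}(X-1)^{2m_2}\Lambda_2\ang{\Xi}^{-1-\ep_0}$, an even power) are \emph{both} nonnegative, and the region analysis \eqref{E:g11-1}--\eqref{E:g11-3} goes through. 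With that replacement your plan coincides with the paper's proof; note also that the paper's blow-up uses $\alpha=1/(m_2+1)$, $\beta=m_2/(m_2+1)$ rather than your $2/(2m_2+3)$, $(2m_2+1)/(2m_2+3)$ — this is only bookkeeping since one is after $h^{2+\eta}$ rather than the sharp finitely degenerate exponent, but the exponents you carry through the optimization must then be recomputed accordingly.
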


\begin{proof}

The proof of these Propositions is again very similar, so we put them together.
We will first prove Proposition \ref{P:ml-inv-4a}, and then point out how
the proof must be modified to get Proposition \ref{P:ml-inv-4b}.

The idea of the proof of Proposition \ref{P:ml-inv-4a} (and indeed Proposition
\ref{P:ml-inv-4b}) is to ``round off the corners'' in an $h$-dependent
fashion to obtain a {\it finitely degenerate} inflection point, and then mimic the proof of Proposition \ref{P:ml-inv-2}.

Choose a point $x_0 = x_0(h) >0$ and $\epsilon>0$ such that $x_0$ is
the smallest number so that 
\[
-  V_0'( x ) \geq \frac{ h}{\varpi(h)}, \,\, \, x_0 \leq | x -1| \leq \epsilon,
\]
where $\varpi(h)$ will be determined later.  
Similar considerations apply to choosing the parameters here as in the
previous subsection, but since we wrote that in excruciating detail,
we will leave out one or two details in this subsection.  Fix $m_2 \geq
1$, and choose also
an odd function $f \in \Ci_c([-2,2]) \cap \GG^0_\tau$ for some $\tau
< \infty$, with $f(x) =  - (x)^{2m_2 + 1}/(2m_2 +1)$ for $| x | \leq 1$ and
$f(x), f'(x) \leq 0$ for $0 \leq x \leq 2$.  For
another parameter $\Gamma(h)$ to be determined, let
\[
W_h(x) = \Gamma(h) f((x-1)/x_0),
\]
and let
\[
V_{0,h}(x) = V_0(x) + W_h(x)
\]
and
\[
V_h(x) = V(x) + W_h(x)
\]
(see Figure \ref{fig:Vh-2}).  The parameter $\Gamma(h)$ will be seen
to be a constant multiple of $h^{2+\eta }$, where $\eta >0$, $\eta =
\O( m_2^{-1} )$ as $m_2 \to \infty$ in the case of Proposition
\ref{P:ml-inv-4a}.  As in the previous subsection, $\Gamma(h)$ will be
a small constant times this power of $h$ in the case of Proposition  \ref{P:ml-inv-4b}.  
\begin{figure}
\hfill
\centerline{\input{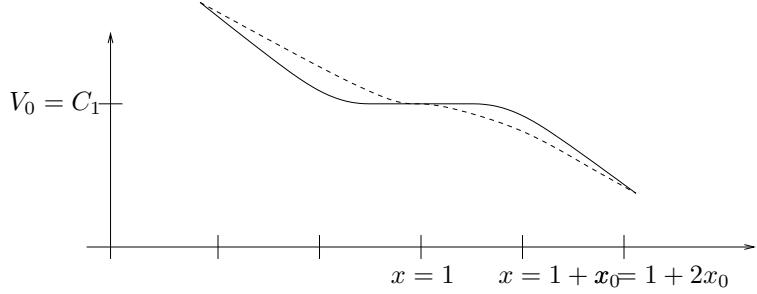}}
\caption{\label{fig:Vh-2} The potential $V_0$ and the modified potential
  $V_{0,h}$ (in dashed). }
\hfill
\end{figure}
By construction,
\[
| V_0(x) - V_{0,h}(x) | \leq | W_h | \leq \Gamma(h).
\]

Let $Q_1 = (hD)^2 + V_h$ with symbol $q_1 = \xi^2 + V_h$.  
The Hamilton vector field $\hamvf$ associated to the symbol $q_1$ is given by
\begin{align*}
\hamvf & = 2\xi\pa_{x} -V_h' \pa_{\xi} \\
& = 2 \xi \pa_x - \left( \frac{\Gamma(h)}{x_0} f'((x-1)/x_0) + V_0'(x)
  + h^2 V_1'(x) 
\right) \p_\xi .
\end{align*}

We will consider a commutant localizing in this region and singular at
the critical point 
in a controlled way: we introduce new variables
$$
\Xi=\frac{\xi}{(h/\th)^{\beta}},\quad X-1 = \frac{x-1}{(h/\th)^\alpha},
$$
with 
$\alpha, \beta > 0$, $\alpha = 1/(m_2 +1)$, and $\alpha + \beta = 1$ so that we may use the
two-parameter calculus.

We remark that in the new ``blown-up'' coordinates $\Xi,X,$
\begin{align}
\hamvf & = (h/\th)^{\beta - \alpha}\big(2\Xi \pa_X -
(h/\th)^{\alpha - 2 \beta } V'_h( (h/\th)^{\alpha} (X-1) +1)
\pa_\Xi\big) 
  \label{blownupvf-4}
\end{align}

Now fix $\epsilon_0>0$ and set 
$$
\Lambda_1(s) = \int_0^s \ang{s'}^{-1-\epsilon_0} \, ds'
$$
and
\[
\Lambda_2(s) =  1 + \int_{-\infty}^s \lll s' \rrr^{-1-\ep_0} \, ds'.
\]
$\Lambda_1$ is a bounded symbol which looks like $s$ near $0$, and
$\Lambda_2$ is a bounded symbol with positive derivative for $s$ near
$0$, and $\Lambda_2 \geq 1$ everywhere.

We introduce the singular symbol
\begin{align*}
a(x,\xi;h) & = \Lambda_1(\Xi)\Lambda_2(X-1)\chi(x-1)\chi(\xi)\\
& =
\Lambda_1(\xi/(h/\th)^{\beta}) \Lambda_2( (x-1)/(h/\th)^\alpha)\chi(x-1)\chi(\xi),
\end{align*}
where $\chi(s)$ is a cutoff function equal to $1$ for $\abs{s}\leq \delta_1$
and $0$ for $s\geq 2\delta_1$ ($\delta_1$ will be chosen shortly).  
Then $a$ is bounded since we have restricted the
domain of integration to $|(x-1, \xi) | \leq \delta_1$.  Further, $a$
satisfies the symbolic estimates:
$$
\abs{\pa_X^{\vec{\alpha}} \pa_\Xi^{\vec{\beta}} a}\leq C_{\vec{\alpha},\vec{\beta}} .
$$
(Recall that $x-1=(h/\th)^\alpha (X-1)$ and $\xi=(h/\th)^{\beta}\Xi.$)
Using \eqref{blownupvf-4}, it is simple to
compute
\begin{equation}\label{gdefn-4}
\begin{aligned}
\hamvf (a) = & (h/\th)^{\beta - \alpha}\chi(x-1)\chi(\xi)\big(
2\Lambda_1(\Xi)
\ang{X-1}^{-1 - \ep_0}\Xi \\
& - (h/\th)^{\alpha - 2 \beta } V'_h( (h/\th)^{\alpha} (X-1) +1)\ang{\Xi}^{-1-\ep_0}
\Lambda_2(X-1) \big)+r\\
=: & (h/\th)^{\beta - \alpha} g+r
\end{aligned}
\end{equation}
with $$\supp r\subset \{\abs{x-1}>\delta_1\} \cup \{\abs{\xi}>\delta_1\}$$
($r$ comes from terms involving derivatives of $\chi(x-1)\chi(\xi)$).

 For $|X-1| \leq (h/\th)^{-\alpha} x_0$ we have
\begin{align*}
- & (h/\th)^{\alpha - 2 \beta }  V'_h( (h/\th)^{\alpha} (X-1) +1)\ang{\Xi}^{-1-\ep_0}
\Lambda_2(X-1) \\
& = \Gamma(h) x_0^{-(2m_2+1)}  (h/\th)^{\alpha(2m_2+1) - 2 \beta}
(X-1)^{2m_2} \ang{\Xi}^{-1-\ep_0}
\Lambda_2(X-1)  + g_2 ,
\end{align*}
with
\[
g_2 = - (h/\th)^{\alpha - 2 \beta } ( V_0' + h^2 V_1')( (h/\th)^{\alpha} (X-1) +1)\ang{\Xi}^{-1-\ep_0}
\Lambda_2(X-1) .
\]
Let us denote by $g_1$ the part of $g$ obtained in this fashion,
microlocally in $\{ |X-1| \leq (h/\th)^{-\alpha} x_0 \}$:
\begin{align*}
g_1 & = g - g_2 \\
& = 2\Lambda_1(\Xi)
\ang{X-1}^{-1 - \ep_0}\Xi \\
& \quad + \Gamma(h) x_0^{-(2m_2+1)}  (h/\th)^{\alpha(2m_2+1) - 2 \beta}
(X-1)^{2m_2} \ang{\Xi}^{-1-\ep_0}
\Lambda_2(X-1) .
\end{align*}

 For $|X-1| \leq (h/\th)^{-\alpha} x_0$ and $|
\Xi | \leq (h / \th )^{-\beta} \delta_1$ consider 
\begin{align*}
g_1 
= & 2\Lambda_1(\Xi) \Xi \ang{X-1}^{-1-\ep_0} \\
& + 
\frac{\Gamma(h)}{x_0^{2m_2+1}}  (h/\th)^{\alpha(2m_2+1) - 2 \beta}
(X-1)^{2m_2}\Lambda_2(X-1)\ang{\Xi}^{-1-\ep_0} \\
= & 2\Lambda_1(\Xi) \Xi \ang{X-1}^{-1-\ep_0} \\
& + 
\frac{\Gamma(h)}{x_0^{2m_2+1}}  (h/\th)^{\alpha}
(X-1)^{2m_2}\Lambda_2(X-1)\ang{\Xi}^{-1-\ep_0} ,
\end{align*}
where we have used $\alpha = 1/(m_2 + 1)$.  Continuing, and rescaling using the $L^2$-unitary rescaling
\[
X' -1= \lambda (X-1), \,\,\, \Xi' = \lambda^{-1}\Xi,
\]
we get 
\begin{align*}
g_1
= & \lambda^2 \Big( 2\lambda^{-1}\Lambda_1 ( \Xi ) (\lambda^{-1}\Xi )
  \ang{X-1}^{-1-\ep_0} \\
& +  \lambda^{-2-2m_2} \Gamma x_0^{-2m_2-1} (h/\th)^{\alpha} \Lambda_2 (X-1) ( \lambda (X-1))^{2m_2} \ang{\Xi}^{-1-\ep_0} \Big) \\
= & \lambda^2 \Big( 2 \lambda^{-1}\Lambda_1( \lambda \Xi') \Xi' \ang{
    \lambda^{-1} (X'-1) }^{-1-\ep_0} \\
& + \lambda^{-2-2m_2} \Gamma x_0^{-2m_2-1} (h/\th)^{\alpha} \Lambda_2
  (\lambda^{-1}(X'-1)) (X'-1)^{2m_2} \ang{ \lambda \Xi' }^{-1-\ep_0} \Big) .
\end{align*}
As in the previous subsection, the parameter $\lambda>0$ will be seen
to be a small $h$-dependent parameter chosen to optimize lower bounds
on $g_1$ amongst several different regions.

The error term $g_2$ is the term in the expansion of $g$ coming
from $V'$ rather than $W_h'$.  We will deal with $g_2$ in due course.  
We are now microlocalized on a set where
\[
| X' -1| \leq \lambda (h/\th)^{-\alpha} x_0, \,\,\, | \Xi' | \leq \lambda^{-1} (h/\th)^{-\beta} \delta_1,
\]
and will be quantizing in the $\th$-Weyl calculus, so we need symbolic
estimates on these sets.  

If 
\[
| X'-1| \leq \lambda\delta_1, \text{ and } | \Xi' | \leq \lambda^{-1} \delta_1,
\]
and $\delta_1>0$ is sufficiently small, 
then $\Lambda_1( \lambda \Xi') \sim \lambda \Xi'$ and
$\Lambda_2(\lambda^{-1}( X'-1)  ) \sim 1$, so
that $g_1$ is bounded below by a multiple of
\begin{equation}
\min \{  \lambda^2,  \lambda^{-2-2m_2} \Gamma (h/\th)^{\alpha} \} (
(\Xi')^2 + (X'-1)^{2m_2} ). \label{E:g11-1}
\end{equation}
Hence the $\th$-quantization of $g_1$ is bounded below by this minimum
value 
times 
$\th^{2m_2/(m_2 +1)}$ on this set (using \cite[Lemma A.2]{ChWu-lsm}).

Now on the complementary set, 
if $\abs{\lambda \Xi'}\geq
 \max\left( \abs{ \lambda^{-1} (X'-1)}^{1+\ep_0}, (\delta_1/2)^{1 + \epsilon_0} \right)$
then
\begin{align*}
g_1& \geq c \lambda^2 \lambda^{-1} \Lambda_1( \lambda \Xi') \Xi' \ang{ \lambda^{-1} (X'-1) }^{-1-\ep_0} \\
& \geq c\lambda \sgn( \Xi') \Xi' | \lambda  \Xi' |^{-1} \\
& \geq c_0
\end{align*}
for some $c_0 >0$.

If $\abs{\lambda^{-1}  (X'-1) }^{1+\ep_0}\geq
\max\left(\abs{\lambda \Xi'}, (\delta_1 /2)^{1 + \ep_0}\right),$ we
have two regions to consider.  The first, if 
 $| \lambda \Xi'| \leq (\delta_1 /2)^{1 + \epsilon_0}$ (and using that $| \lambda^{-1}
X' | \leq (h/\th)^{-\alpha} x_0$ in this region),
then
\begin{align}
g_1 & \geq c \lambda^2 \Big(  ( \Xi' )^2 (h/\th)^{\alpha(1 + \epsilon_0)}
x_0^{-1-\epsilon_0}  \notag \\
& \quad + \lambda^{-2-2m_2} \Gamma x_0^{-2m_2 -1}
(h/\th)^{\alpha} 
   (X'-1)^{2m_2} 
\Big) \\
& \geq \min \{ \lambda^2 (h/\th)^{\alpha(1 + \epsilon_0)}
x_0^{-1-\epsilon_0} , \lambda^{-2m_2} \Gamma  x_0^{-2m_2 -1}
(h/\th)^{\alpha}  \} \notag \\
& \quad \times ( ( \Xi')^2 + (X'-1)^{2m_2} ). \label{E:g11-2}
\end{align}
We optimize this by setting the two terms in the minimum equal:
\[
\lambda^2 (h/\th)^{\alpha(1 + \epsilon_0)}
x_0^{-1-\epsilon_0} = \lambda^{-2m_2} \Gamma  x_0^{-2m_2 -1}
(h/\th)^{\alpha} ,
\]
or
\[
\lambda^{2 + 2m_2} = \Gamma (h/\th)^{-\alpha \epsilon_0} x_0^{-2m_2 +
  \epsilon_0},
\]
which yields in turn the lower bound
\begin{align*}
\lambda^2 & (h/\th)^{\alpha(1 + \epsilon_0)}
x_0^{-1-\epsilon_0} \\
& = \Gamma^{1/(m_2 +1)} (h/\th)^{-\alpha \epsilon_0 /
  (m_2 +1) + \alpha(1 + \epsilon_0)} x_0^{(-2m_2 + \epsilon_0 )/(m_2
  +1)  -1-\epsilon_0} .
\end{align*}
Then according to \cite[Lemma A.2]{ChWu-lsm}, the $\th$-quantization
of \eqref{E:g11-2} is bounded below by
\[
\Gamma^{1/(m_2 +1)} (h/\th)^{-\alpha \epsilon_0 /
  (m_2 +1) + \alpha(1 + \epsilon_0)} x_0^{(-2m_2 + \epsilon_0 )/(m_2
  +1)  -1-\epsilon_0} \th^{2m_2/(m_2 +1)}.
\]

On the other hand, if $| \lambda^{-1} (X'-1)|^{1 + \epsilon_0} \geq |
\lambda \Xi'| \geq (\delta_1/2)^{1 + \epsilon_0}$, then 
\begin{align}
g_1 & \geq  c  \sgn(\Xi') \lambda \Xi' 
(h/\th)^{\alpha(1 + \epsilon_0)} x_0^{-1-\epsilon_0} \notag  \\
& \geq c (h/\th)^{\alpha(1 + \epsilon_0)} x_0^{-1-\epsilon_0} . \label{E:g11-3}
\end{align}

We now again take the worst lower bound for
\eqref{E:g11-1}-\eqref{E:g11-3} to get for a function $u$ with
$h$-wavefront set localized in 
\[
|(X'-1)| \leq \lambda
(h/\th)^{-\alpha} x_0, \,\, | \Xi' | \leq \lambda^{-1}
(h/\th)^{-\beta} \delta_1,
\]
\begin{align*}
& \lll \Op_{\th} (g_1) u, u \rrr \\ & \quad \geq 
c \Gamma^{1/(m_2 +1)} 
h^{ (1 + \epsilon_0)/(m_2 +1)  - \epsilon_0 / (m_2 +1)^2} \\ & \quad
\quad \times 
\th^{ 2-  (3 + \epsilon_0)/(m_2 +1)  + \epsilon_0 / (m_2 +1)^2  }
x_0^{   -3  - \epsilon_0+ (2 + \epsilon_0)/(m_2 +1)   }
\| u \|^2.
\end{align*}
Here we have used that $\alpha = 1/(m_2 +1)$.

On the other hand, if $| \lambda^{-1}(X'-1) | \geq (h/\th)^{-\alpha} x_0$, we use the
assumed lower bound on $V_0'$ to estimate $g$ from below.  Examining
the potential terms, we have
\begin{align}
-& (h/\th)^{\alpha - 2 \beta} V'_h( (h/\th)^{\alpha} X)\ang{\Xi}^{-1-\ep_0}
\Lambda_2(X-1) \big) \notag \\ 
& =  - (h/\th)^{\alpha - 2 \beta} V'( (h/\th)^{\alpha} X)\ang{\Xi}^{-1-\ep_0}
\Lambda_2(X-1) \big) + g_3 \notag \\
& \geq C (h/\th)^{\alpha - 2 \beta}  \frac{h}{\varpi(h)}
(h/\th)^{(1 + \ep_0)\beta } + g_3 \notag \\
& = C \frac{ h^{2 \alpha + \ep_0 \beta} \th^{\beta - \alpha - \ep_0
    \beta}}{\varpi(h)} + g_3, \label{E:tV-controls-2}
\end{align}
assuming that $h/\varpi(h) \gg h^2$ so that $V_0'$ controls $h^2 V_1'$
(this will be verified later).
The error $g_3 \geq 0$ comes from using $V'$
in the expansion of $g$ rather than $W_h'$.

We now deal with the (nearly) positive error terms $g_2$ and $g_3$.  
\begin{lemma}
\label{L:g2g3-2}
The error terms $g_2$ and $g_3$ are semi-bounded below in the
following sense: if $u(X)$ has wavefront set localized in
\[
\{ |X-1| \leq \epsilon (h/\th)^{-\alpha}, \,\, | \Xi | \leq \epsilon
(h/\th)^{-\beta} \},
\]
then for any $\delta >0$ and $N >0$, 
\[
\lll \Op_{\th} ( g_j ) u, u \rrr \geq -C_N 
h^{(N+1) \alpha -
 2\beta -  \delta} \th^{2 \beta - \alpha} \| u \|^2,
\]
for $j = 2, 3$.

\end{lemma}

\begin{proof}

We prove the relevant bounds for $x \geq 1$.  The analysis for $x \leq
1$ is similar.  For $g_2$, for $N>0$ large, and $\delta>0$ small, choose $1 < x_1 <
x_2 = 1 + o(1)$ satisfying
\[
- V_0'(x_1) = h^{N \alpha}
\]
and
\[
- V_0'(x_2) = h^{N \alpha-\delta}.
\]
As usual, since $V_0'(x) = \O( (x-1)^\infty )$, the points $x_j$, $j = 1, 2$
satisfy $x_j -1 \gg h^{\delta_1}$ for any $\delta_1 >0$.  As before, the 0-Gevrey
condition also implies $|x_2-x_1| \gg h^{\delta_1}$ for any
$\delta_1>0$ as well.  

Now let $\psi(x)$ be a smooth function, $\psi \geq 0$, $\psi(x) \equiv 1$ on
$[1,x_1]$ with $\psi (x) = 0$ for $x \geq x_2$.  Assume also that
$| \partial_x^k \psi | \leq C_k| x_2 - x_1 |^{-k} = o(h^{-k \delta_1} )$
for any $\delta_1 >0$.  Let $\tpsi(X) = \psi((h/\th)^{\alpha} X )$ so
that 
\[
| \p_X^k \tpsi | \leq C_k (h/\th)^{\alpha k} | x_2 -
x_1|^{-k}=o(h^{k(\alpha - \delta_1)} \th^{-\alpha k}).
\] 
  We have
\begin{align*}
& \lll \Op_{\th} ( g_2 ) u, u \rrr \\
& \quad = \lll \Op_{\th} ( g_2 ) (1 - \tpsi)
u, (1 - \tpsi ) u \rrr + 
\lll \Op_{\th} ( g_2 ) \tpsi u, \tpsi u \rrr  \\
& \quad \quad + 2\lll \Op_{\th} ( g_2 )
\tpsi u, (1 -\tpsi ) u \rrr .
\end{align*}
We estimate each term separately.

On the support of $1-\tpsi$ (recalling once again that we are
restricting our attention to $x \geq 1$), we have $(h/\th)^{\alpha} X \geq x_1$ so
that in this region we can once again appeal to the 0-Gevrey condition
to control $V_1'$.  As discussed previously, $V_1$ consists of
quotients of derivatives of the function $A$ with powers of $A$.  As
$A$ is bounded above and below for $x$ small, we again use the 0-Gevrey
condition to write for any $\delta_1>0$ and for some $s, \tau <
\infty$, 
\begin{align*}
h^2 | V_1 '((h/\th)^\alpha (X-1) + 1) | & \leq C h^2 |x_1 |^{-s \tau }
| A'((h/\th)^\alpha (X-1) + 1) | \\
& \leq C h^{2-s \tau \delta_1} | V_0'((h/\th)^\alpha (X-1) + 1) |,
\end{align*}
and similarly for a finite number of derivatives of $V_1$.  Taking
$\delta_1>0$ sufficiently small, we see that on the support of $1 -
\tpsi$, $V_0'$ controls $h^2 V_1'$.  That is, for $h>0$ sufficiently
small,
\begin{align*}
& \lll \Op_{\th} (g_2) (1 - \tpsi) u, (1-\tpsi) u \rrr \\
& \quad = - (h/\th)^{\alpha - 2 \beta } \lll \Op_{\th} (( V_0' + h^2 V_1')\ang{\Xi}^{-1-\ep_0}
\Lambda_2(X-1) ) (1 - \tpsi) u, (1-\tpsi) u \rrr \\
& \quad \geq - \frac{1}{2} (h/\th)^{\alpha - 2 \beta } \lll \Op_{\th} ( V_0'\ang{\Xi}^{-1-\ep_0}
\Lambda_2(X-1) ) (1 - \tpsi) u, (1-\tpsi) u \rrr .
\end{align*}
Here to save space (and since it will be integrated out anyway) we
suppressed the argument of $V_0$ and $V_1$; both functions are
understood to be evaluated at $( (h/\th)^{\alpha} (X-1) +1)$.  
Then in this same region we have:
\begin{align*}
 -(h/\th)^{\alpha - 2 \beta}  
    & \Lambda_2(X-1)  
  V_0'(   (h/\th)^{\alpha} (X-1) +1  )  \lll \Xi
\rrr^{-1-\ep_0} \\
& =  (h/\th)^{\alpha - 2 \beta}  h^{N \alpha } A(X, h,
\th) \lll \Xi \rrr^{-1-\ep_0}
\end{align*}
where $A$ is a symbol bounded below by a positive constant.  On the
set where $A \lll \Xi \rrr^{-1-\ep_0} \geq 1$, this operator is
bounded below, while on the complement, we use the Sharp
G\r{a}rding inequality to get for any $\delta_1 >0$
\[
\lll \Op_{\th} ( g_2 ) (1 - \tpsi)
u, (1 - \tpsi ) u \rrr \geq -C_{\delta_1} \th h^{N \alpha + 2\alpha - 2 \beta
  -\delta_1} \th^{2 \beta - 2 \alpha} \| (1 - \tpsi)u \|^2.
\]
For the remaining two terms, on the support of $\tpsi$, we have
$1 \leq (h/\th)^{\alpha} X \leq x_2$, so that in order to estimate
$V_1'$, we need to estimate a finite number of derivatives of $A$ from
above.  But we know that for $k \geq 1$, $| \p_x^k A |$ is an
increasing function for $x \geq 1$ in a small neighbourhood.  Hence we
can estimate the size of $V_1'$ by estimating it at $x_2$.  For this,
we once again use the 0-Gevrey assumption to get for any $\delta_1>0$
and for some $s, \tau  < \infty$
\begin{align*}
h^2 | V_1'( ( h/\th)^\alpha (X-1) + 1) | & \leq C h^2 | x_2|^{-s \tau }
| V_0 '(x_2) | \\
& \leq C h^{2 - \delta_1 s \tau} h^{Nm/(m+1) - \delta},
\end{align*}
by our choice of $x_2$.  This shows that on the support of $\tpsi$,
$h^2 V_1'$ is controlled by a large power of $h$.
Then in this region
\begin{align*}
g_2 & = -(h/\th)^{\alpha - 2 \beta} 
    \Lambda_2(X) 
   V'((h/\th)^{\alpha} X )  \lll \Xi
\rrr^{-1-\ep_0} \\
& =  (h/\th)^{\alpha - 2 \beta}  h^{N\alpha - \delta} A_1(X, h,
\th) \lll \Xi \rrr^{-1-\ep_0},
\end{align*}
where $A_1$ is a function satisfying
\[
| \partial_X^k A_1 | \leq C_{k, \delta_1} (h^{\alpha - \delta_1 } \th^{-\alpha}
)^k.
\]
Hence if $\delta_1 < \alpha$, 
\[
\lll \Op_{\th} ( g_2 ) \tpsi u, \tpsi u \rrr  = \O(h^{(N+1) \alpha -
 2\beta -  \delta} \th^{2 \beta - \alpha} ) \| u \|^2,
\]
and similarly
\[
\lll \Op_{\th} ( g_2 )
\tpsi u, \tpsi u \rrr = \O(h^{(N+1) \alpha -
 2\beta -  \delta} \th^{2 \beta - \alpha} ) \| u \|^2.
\]

As in Lemma \ref{L:g2g3}, the proof for $g_3$ is the same.

\end{proof}

We are now in position again to fix some of the parameters.  We start
with $\Gamma$, which we again want to be much smaller than our
computed lower bound on $h \Op_h(\hamvf (a) )$.  We need to solve
\begin{align*}
h&  (h/\th)^{(m_2-1)/(m_2+1)} \Gamma^{1/(m_2 +1)} 
h^{ (1 + \epsilon_0)/(m_2 +1)  - \epsilon_0 / (m_2 +1)^2} \\
& \quad \times \th^{ 2-  (3 + \epsilon_0)/(m_2 +1)  + \epsilon_0 / (m_2 +1)^2  }
x_0^{   -3  - \epsilon_0+ (2 + \epsilon_0)/(m_2 +1)   } \\ & \gg \Gamma
\end{align*}
Again, $m_2>0$ will be large, $\epsilon_0>0$ is small, and $x_0>0$ is
$o(1)$, so it suffices to solve
\begin{align*}
& h^{2 
 - (1 - \epsilon_0)/(m_2 +1)  - \epsilon_0 / (m_2 +1)^2} \\
& \quad \times 
\th^{ 2    -(m_2-1)/(m_2+1)   -  (3 + \epsilon_0)/(m_2 +1)  + \epsilon_0 / (m_2 +1)^2  }
\\
& \quad \quad = \Gamma^{m_2/(m_2+1)},
\end{align*}
or
\[
\Gamma = h^{2 + 1/m_2 
  + \epsilon_0/m_2   - \epsilon_0 / m_2 (m_2 +1)} 
\th^{ 1   -   \epsilon_0/m_2   +
  \epsilon_0 / m_2 (m_2 +1)  }
\]
Then for this value of $\Gamma$, our lower bound on $h
\Op_h(\hamvf(a))$ is
\begin{align*}
\Gamma & x_0^{   -3  - \epsilon_0+ (2 + \epsilon_0)/(m_2 +1)   }  \\
& = h^{2 + 2/m_2 
 - (1 - \epsilon_0)/m_2   - \epsilon_0 / m_2 (m_2 +1)} \\
& \quad \times
\th^{ 2 + 2/m_2   -(m_2-1)/m_2   -  (3 + \epsilon_0)/m_2   +
  \epsilon_0 / m_2 (m_2 +1)  } \\
& \quad \times x_0^{   -3  - \epsilon_0+ (2 +
  \epsilon_0)/(m_2 +1)   } .
\end{align*}
We again observe that in this case, the exponent of $h$ is $2 + \O(
m_2^{-1})$, which can be made smaller than $2 + \eta$ for any
$\eta>0$.

We can now choose the parameter $\varpi(h)$ as well, again by
matching:
\begin{align*}
& \frac{ h^{2 \alpha + \ep_0 \beta} \th^{\beta - \alpha - \ep_0
    \beta}}{\varpi(h)} \\
& \quad =
\Gamma^{1/(m_2 +1)} 
h^{ (1 + \epsilon_0)/(m_2 +1)  - \epsilon_0 / (m_2 +1)^2} \\
& \quad \quad \times 
\th^{ 2-  (3 + \epsilon_0)/(m_2 +1)  + \epsilon_0 / (m_2 +1)^2  }
x_0^{   -3  - \epsilon_0+ (2 + \epsilon_0)/(m_2 +1)   } \\
& \quad = h^{  \frac{3m_2+1}{m_2^2 + m_2} + \frac{\epsilon_0}{m_2 +1}   } \\
& \quad \quad \times 
\th^{ 2 - \frac{2}{m_2+1} - \frac{\epsilon_0}{m_2+1}   } \\
& \quad \quad \times x_0^{   -3  - \epsilon_0+ \frac{(2 + \epsilon_0)}{(m_2
  +1)}   } ,
\end{align*}
or
\begin{align*}
\varpi(h) & = h^{\gamma_1    } 
\th^{\gamma_2 } x_0^{   3  + \epsilon_0- (2 + \epsilon_0)/(m_2
  +1)   },
\end{align*}
where
\begin{align*}
\gamma_1 & = -\frac{1}{m_2} + \epsilon_0 \left( \frac{m_2-1}{m_2+1} \right)
\end{align*}
and
\begin{align*}
\gamma_2 & = -1 - \epsilon_0 \left( \frac{m_2-1}{m_2+1} \right) .
\end{align*}

All told, we have shown for a function $u(X)$ with semiclassical
wavefront set localized in a set $\{ | X -1| \leq \epsilon
(h/\th)^{-\alpha }, | \Xi | \leq \epsilon (h/\th)^{-\beta} \}$ (again
using Lemma \ref{L:g2g3-2} to bound the $g_2$ and $g_3$ terms)
\begin{align*}
& h (h/\th)^{(m_2 -1) / (m_2 +1) }\lll \Op_{\th} (g) u, u \rrr \\
& \quad  \geq c \Gamma(h) x_0^{  -3  - \epsilon_0+ (2 + \epsilon_0)/(m_2 +1)  } \| u \|^2\\
& \quad \quad + h (h/\th)^{(m_2 -1) / (m_2 +1) } (\lll \Op_{\th} (g_2) u, u
\rrr + \lll \Op_{\th} (g_3) u, u \rrr )\\
& \quad \geq c (1 - o(1)) h^{2 + 2/m_2 
 - (1 - \epsilon_0)/m_2   - \epsilon_0 / m_2 (m_2 +1)} \\
& \quad \quad \times 
\th^{ 2 + 2/m_2   -(m_2-1)/m_2   -  (3 + \epsilon_0)/m_2   +
  \epsilon_0 / m_2 (m_2 +1)  } \\
& \quad \quad \times x_0^{   -3  - \epsilon_0+ (2 +
  \epsilon_0)/(m_2 +1)   } \| u \|^2 .
\end{align*}

As in the previous subsection, we have $h / \varpi = o(1)$, so that
$x_0 = o(1)$, and $h / \varpi \gg h^2$ so that \eqref{E:tV-controls-2} holds, which
closes this part of the argument.

This concludes the study of the principal term in the commutator
expansion.  Of course we still have to control the lower order terms
in the commutator expansion, which we do in the following Lemma.  This
is where it becomes very important that $\alpha = 1/(m_2 +1) >0$.

\begin{lemma}
\label{L:Q-comm-error-4a}
The symbol expansion of $[Q_1, a^w]$ in the $h$-Weyl calculus is of
the form
\begin{align*}
[Q_1, a^w] = & \Op_h^w \Bigg( \Big( 
\frac{i h}{2} \sigma ( D_x , D_\xi; D_y , D_\eta) \Big) (q_1(x, \xi)
a(y, \eta) - q_1(y, \eta) a ( x , \xi) ) |_{ x = y , \xi = \eta} \\
& + e (
x, \xi ) + r_3(x, \xi)\Bigg) ,
\end{align*}
where $e$ satisfies
\[
\Op_h^w(e)  
\ll h \Op_h(\hamvf (a)).
\]

\end{lemma}

\begin{proof}
Since everything is in the Weyl calculus, only the odd terms in the
exponential composition expansion are non-zero.  Hence the $h^2$ term
is zero in the Weyl expansion.  
Now according to Lemma \ref{l:err} and the standard $L^2$ continuity
theorem for $h$-pseudodifferential operators, we need to estimate a
finite number of 
derivatives of the error:
\[
 | \partial^{\gamma} e_2 | \leq C h^{3}
\sum_{ \gamma_1 + \gamma_2 = \gamma } 
 \sup_{ 
{{( x, \xi) \in T^* \RR }
\atop{ ( y , \eta) \in T^* \RR }}} \sup_{
|\rho | \leq M  \,, \rho \in \NN^{4} }
\left|
\Gamma_{\alpha, \beta, \rho, \gamma}(D)
( \sigma ( D) ) ^{3} q_1 ( x , \xi)  
a ( y, \eta ) 
\right|  
.
\]
However, since $q_1(x, \xi) = \xi^2 + V_h(x)$, we have
\[
D_x D_\xi q_1 = D_{\xi}^3 q_1 = 0,
\]
so that
\begin{align*}
\sigma(D)^3 & q_1(x, \xi) a(y, \eta) |_{ x = y , \xi = \eta} \\
& = D_x^3 q_1 D_\eta^3 a |_{ x = y , \xi = \eta} \\
& = -V_h'''(x) (\th/h)^{2 \beta} \Lambda_1'''
((\th/h)^{\beta} \eta) \\
& \quad \times \Lambda_2((\th/h)^{\alpha} y) \chi(y)
\chi(\eta)
+ r_3,
\end{align*}
where $r_3$ is supported in $\{ | (x, \xi) | \geq \delta_1 \}$.  
Owing to the cutoffs $\chi(y) \chi(\eta)$ in the definition of $a$
(and the corresponding implicit cutoffs in $q_1$), we only need to
estimate this error in compact sets.  The derivatives
$h^{\beta} \partial_\eta$ and $h^\alpha \partial_y$
preserve the order of $e_2$ in $h$ and increase the order in $\th$, while the other derivatives lead
to higher powers in $h/\th$ in the symbol expansion.  Hence we need only estimate $e_2$,
as the derivatives satisfy similar estimates.

In order to estimate $e_2$, we again use conjugation to
the $2$-parameter calculus, and at some point invoke the 0-Gevrey assumption.  We have
\[
\| \Op_h^w(e_2) u \| = \| T_{h, \th} \Op_h^w(e_2) T_{h, \th}^{-1}
T_{h, \th} u \| \leq \| T_{h, \th} \Op_h^w(e_2) T_{h, \th}^{-1}
\|_{L^2 \to L^2} \| u \|,
\]
by unitarity of $T_{h, \th}$.  But $T_{h, \th} \Op_h^w(e_2) T_{h,
  \th}^{-1} = \Op_{\th}^w(e_2 \circ \B )$ and
\begin{align*}
e_2 \circ \B & =  -h^3 V'''_h((h/\th)^{\alpha}X) (\th/h)^{3\beta } \Lambda_1'''
(\Xi) \\
& \quad \times \Lambda_2(X) \chi(x)
\chi(\xi) + r_3 \circ \B  ,
\end{align*}
where $r_3$ is again microsupported away from the critical point
(coming from the derivatives on $\chi(x) \chi(\xi)$.  
We recall that $V_h'''(x) = V'''(x) + W_h'''(x)$, where $W_h(x) =
\Gamma(h) f((x-1)/x_0)$.  As $f \in \Ci_c$, and we have already
computed the value $\Gamma$, we know that 
\[
|W_h'''(x)| \leq C \Gamma x_0^{-3},
\]
and hence
\begin{align*}
| h^3 (\th/h)^{3\beta} \Lambda_1'''
(\Xi) \Lambda_2(X) W_h''' ((h/\th)^{\alpha}X)  (\chi(x)
\chi(\xi) | & \leq C\Gamma h^{ 3 \alpha }\th^{3\beta} x_0^{-3} \\
& = o(\Gamma(h) x_0^{  -3  - \epsilon_0+ (2 + \epsilon_0)/(m_2 +1)  }),
\end{align*}
which is little-o of our computed lower bound on the quantization  $h \Op_h
(\hamvf (a))$.  

As for $V$, since $V' \in \GG^0_\tau$, for $x$ close to $1$
satisfying (in the rescaled coordinates)
\[
| X-1 | \geq \left( \frac{h}{\th} \right)^{-\alpha + \epsilon_1}, \,\,\,
\epsilon_1 >0, 
\]
\begin{align*}
| h^{3\alpha} \th^{3\beta} V'''((h/\th)^{\alpha} X ) | & \leq Ch^{3\alpha} \th^{3\beta} 
\left| \left( \frac{h}{\th} \right)^{\alpha} (X-1) \right|^{-2 \tau} | V'
((h/\th)^{\alpha} X ) | \\
& \ll h^{2\alpha + \gamma} \th^{3\beta} |  V'
((h/\th)^{\alpha} X ) | 
\end{align*}
provided $\epsilon_1>0$ is sufficiently small (as in the proof of
Lemma \ref{L:Q-comm-error-3a}).  We also follow again the proof of Lemma
\ref{L:Q-comm-error-3a} to break the analysis into several regions.
The details are precisely the same.

On the other hand, we
have for 
\[
| X-1 | \leq \left( \frac{h}{\th} \right)^{-\alpha+ \epsilon_1}, \,\,\,
\epsilon_1 >0, 
\]
since $V_0'''(x) = \O(|x-1|^\infty)$, then
\[
| V_0''' ((h/\th)^{\alpha} X ) | = \O(h^\infty).
\]

The error term must be estimated in terms of $h \hamvf (a)$; we have
shown that the error is always controlled by
\begin{align*}
& o(\Gamma(h) x_0^{  -3  - \epsilon_0+ (2 + \epsilon_0)/(m_2 +1)  })
\\
& \quad + 
o(h^{\alpha} \th^\beta) | V_{0,h}' ((h/\th)^{\alpha} X ) \lll \Xi \rrr^{-1-\ep_0}| + \O
(h^\infty) ,
\end{align*}
which, when quantized, is controlled by little-o of our computed lower
bound on the quantization $h \Op_h( \hamvf (a) )$.

\end{proof}

The rest of the proof of Proposition \ref{P:ml-inv-4a} follows exactly as in
the proof of Proposition \ref{P:ml-inv-3a}.

Lastly, we show how to modify the preceding argument in the case of
Proposition \ref{P:ml-inv-4b}.  The first step is to modify the function $f$
and subsequently $W_h$ and $V_{0,h}$.  Since $V_0(x) \equiv 1$ on an
interval $x-1 \in [-a,a]$, with $V_0'(x) <0$ for $\pm (x-1) > a$, we
choose the point $x_0>0$ to be the smallest number so that
\[
-  V_0'( x ) \geq \frac{ h}{\varpi(h)}, \,\, \, x_0 \leq x-1 -a \leq \epsilon
\]
and similarly for $-\epsilon \leq x - 1 + a \leq -x_0$.  We choose
also the same parameter $\varpi(h)$ as for Proposition
\ref{P:ml-inv-4b}.  
Again we can assume that $x_0  = o(1)$.  Then choose $f \in \Ci_c(\reals) \cap \GG^0_\tau$ for some $\tau
< \infty$, with $f(x) =  - 
x^{2m_2 +1}$ for $| x | \leq a + x_0$, and $f' (x) \leq 0$ for $x \geq 0$, $\supp
f \subset [-a-2x_0, a + 2x_0]$, satisfying
\[
| \partial_x^k f | \leq C_k x_0^{-k}.
\]
Following the proof of Proposition \ref{P:ml-inv-3b}, we set 
the parameter $\tGamma(h) = c_0 \Gamma(h)$ for a small
constant $c_0>0$ to be determined, and where $\Gamma(h)$ was computed
in the course of the proof of Proposition \ref{P:ml-inv-4a}.  As in the
proof of Proposition \ref{P:ml-inv-3b}, we then take 
\[
W_h(x) = \tGamma(h) f((x-1)),
\]
and let
\[
V_h(x) = V(x) + W_h(x)
\]
We then follow the same arguments as in the proofs of Propositions
\ref{P:ml-inv-3a} and \ref{P:ml-inv-4a}, noting that the ``smallness''
assumption on the support of the microlocal cutoff $\phi$ in the $x$
direction was to control lower order terms in Taylor expansions.  As
the function $V_0$ is constant and $f(x) = 1-x^{2m_2+1}$ on $[-a,a]$, the smallness assumption
translates into a small neighbourhood around $[1-a,1+a]$.  Hence in
Proposition 
\ref{P:ml-inv-4b} we have assumed that $\supp \phi \subset
[1-a-\epsilon, 1+a + \epsilon]$.  All of the error terms are treated
similarly to the preceding proof.  Then the same rescaling argument as
in the proof of Proposition \ref{P:ml-inv-3b} proves Proposition \ref{P:ml-inv-4b}.

\end{proof}

\subsection{Stable trapping and quasimodes}

\label{SS:stable}

Suppose $V_0(x)$ has an ``honest'' local minimum at $x = 0$ in the sense
that $V_0$ is eventually increasing as one moves to the left or right of $x = 0$.
That is, 
$V_0(0) = \Vmin$, $V_0'(0) = 0$, $\pm V_0'(x) \geq 0$ for $\pm x \geq
0$ near $0$, and if 
\[
\begin{cases}
x_+ \geq 0 \\
x_- \leq 0
\end{cases}
\]
are the smallest positive/negative values with $V_0'(x) \neq 0$ for
$\pm x > \pm x_\pm$, then $\pm V'(x) >0$ for $\pm x > \pm x_\pm$ in
some neighbourhood.  This means 
there exists $\delta>0$ such that for each $y \in
[\Vmin, \Vmin+ 2 \delta]$, the sets $\{ x : V_0(x) \leq y \}$ have a
non-empty compact connected component containing $x=0$.  By shrinking
$\delta>0$ if necessary, we may also assume that $V_0(x)$ is a convex
function on the connected component containing $x = 0$.

Let $-a < 0$ be the
largest negative number such that $V_0(-a) = \Vmin + \delta$, and let $b>0$ be
the smallest positive number such that $V_0(b) = \Vmin + \delta$.  By
again shrinking $\delta>0$ if necessary, we may assume that $V_0'(-a) <0$ and
$V_0'(b) >0$.  
Choose also $\epsilon>0$ such that $V_0(-a-\epsilon) \geq \Vmin + 3\delta/2$
and $V_0(b+\epsilon) \geq \Vmin + 3\delta/2$ (again shrinking
$\delta>0$ if necessary).
Figure \ref{fig:V-stab} is a picture of the setup.

\begin{figure}
\hfill
\centerline{\input{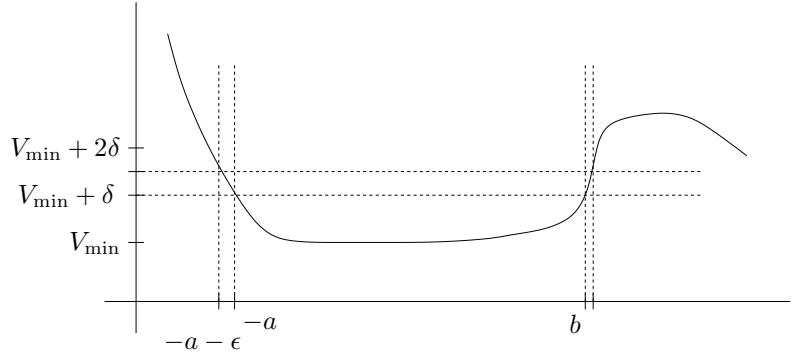}}
\caption{\label{fig:V-stab} The potential $V_0$ near a local min, and
  the choice of parameters $\delta$, $-a$, $b$, and $\epsilon$. }
\hfill
\end{figure}

Let
\[
\tV(x) = \begin{cases} V(x) = V_0(x) + h^2 V_1(x), x \in [-a-\epsilon, b + \epsilon], \\
  \beta x^2, | x | \gg 1,
\end{cases}
\]
where $\beta>0$ is an appropriate constant so that $\tV$ can be
assumed convex.  In particular, we may assume that $\tV^{-1}( E )
\subset [-a, b]$ for $E \in [\Vmin +\delta/2, \Vmin
+ 2 \delta
/3]$, by taking $h>0$ sufficiently small.

Let $L = (hD_x)^2 + \tV(x)$.  
For $h>0$
sufficiently small, Weyl's law implies there exists $\sim h^{-1}$
eigenvalues $E$ of the operator $L$ in the interval $E \in [\Vmin
+\delta/2, \Vmin + 
2 \delta/3]$.  Fix such an eigenvalue, and let $\phi(x)$ be the
(normalized) 
associated eigenfunction.  Then, since $E \in [\Vmin
+\delta/2, \Vmin
+
2 \delta/3]$, we have in particular that 
\[
\phi(x) = \O(h^\infty), \,\,\, x \leq -a, \, x \geq b.
\]
Let $\chi(x) \in \Ci_c( \reals)$ be a smooth function such that
$\chi(x) \equiv 1$ on $[-a,b]$ with support in $[-\epsilon - a, b +
\epsilon]$.  Thus,
\[
\chi(x) \phi(x) = \phi(x) + \O(h^\infty),
\]
and
\begin{align*}
L \chi \phi & = \chi L \phi + [L, \chi] \phi \\
& = E \chi \phi + \O(h^\infty),
\end{align*}
since $[L, \chi]$ is supported on the set where $\phi =
\O(h^\infty)$.  

But since $\tV = V$ on the set $[-a-\epsilon, b + \epsilon]$, we also
have
\[
((hD_x)^2 + V(x) ) \chi \phi = E \chi \phi + \O(h^\infty).
\]
As $\| \chi \phi \| = 1 - \O(h^\infty)$, we have 
\[
\| ((hD_x)^2 + V(x) -E ) \chi \phi \|  = \O(h^\infty) \| \chi \phi \|,
\]
and so evidently for any $N$, there exists $C_N$ such that for any
compactly supported function $\tchi$ such that $\tchi = 1$ on $\supp
\chi$,
\begin{equation}
\label{E:stable-qm}
\| \tchi ((hD_x)^2 + V(x) -E)^{-1} \tchi \chi \phi \| \geq C_N h^{-N}
\| \chi \phi \|.
\end{equation}

\begin{remark}
Of course, if we know more about the structure of the function
$V_0(x)$ near an honest local minimum, then we can say more.  For
example, we can construct WKB approximations as quasimodes, and then
stationary phase can tell us much more detailed information about the quasimodes.  However
indirect, the Weyl law method presented here is in a sense more
robust, and does not really require intimate knowledge of $V_0(x)$
near the minimum.  

\end{remark}

\section{Proof of Theorem \ref{T:dichotomy}}

We are now able to prove Theorem \ref{T:dichotomy}.  Since we have
assumed that the trapped set has only finitely many connected
components, this implies that the function $A(x)$ has only finitely
many critical values, which consequently occur in a compact set.  Let
$A_1, A_2, \ldots A_k$ be the critical values, and let $K_l = \{ A(x)
= A_l \}$ be the critical sets.  
There are two cases to consider.

{\bf Case 1:}  The function $V_0(x) = A^{-2}(x)$ has a local minimum.  Then
apply  \eqref{E:stable-qm} to conclude there are highly localized
quasimodes, and the resolvent therefore blows up faster than any
polynomial (at least along a subsequence).

{\bf Case 2:}  The function $V_0(x) = A^{-2}(x)$ has no minima.  In this case,
each critical value of the function $A^{-2}(x)$ is of either unstable
or transmission inflection type (whether infinitely degenerate or
not).  The important thing to observe is that there are only finitely
many critical values, and they are all isolated in the two-dimensional
phase space in the following sense: if $K_l$ is disconnected for some $l$,
then assume $K_l$ has only two connected components (the finite case
being similar).  The connected components of $K_l$ are separated by a maximum, say
$K_j$ (or
minimum, but this would be Case 1).  The stable and unstable manifolds
associated to the flow around $K_j$ form a {\it global} separatrix,
separating the complete flowouts of the two components of $K_l$.  This
allows us to microlocalize and glue together the trapping estimates examined
in detail in Subsections \ref{SS:unst-nd}-\ref{SS:inf-deg-infl} (see
Appendix \ref{A:gluing}, 
\cite{Chr-disp-1}, \cite{ChMe-lsm}, and \cite{DaVa-gluing}).  The rest of the proof
proceeds precisely as in the proof of \cite[Theorem 2]{ChWu-lsm}.

\section{Proof of Corollary \ref{C:l-sm-cor}}
\label{S:l-sm-pf-sec}

In this section, we recall the functional theoretic argument which
connects resolvent estimates to local smoothing estimates, and in the
process prove Corollary \ref{C:l-sm-cor}.  The technique is often
called a ``$T T^*$'' argument, however we have already used $T$ in our
time interval, so instead we use an $A A^*$ argument.

Let $u_0 \in \s(X)$ and $\chi \in \Ci_c(X)$ and let $A$ be the operator
\be
A u_0 = \chi e^{it\Delta} u_0,
\ee
acting on $L^2(X)$.  We want to show
\be
A : L^2(X) \to L^2([0,T]; H^{s}(X))
\ee 
for some $s>0$ 
is bounded.  By duality, this is equivalent to the adjoint $A^*$ being
bounded 
\be
A^* : L^2([0,T]; H^{-s}(X)) \to L^2(X),
\ee
which is equivalent to the boundedness of the composition
\be
A A^* : L^2([0,T]; H^{-s}(X)) \to L^2([0,T]; H^{s}(X)).
\ee
Computing directly, we get 
\be
A A^* f(t) = \int_0^T \chi e^{i  (t - \tau) \Delta} \chi f( \tau) d
\tau.
\ee

Now let $u$ be defined by 
\be
u(x,t) = \int_0^T e^{i  (t - \tau) \Delta } \chi f( \tau) d
\tau.
\ee
Since we are only interested in the
time interval $[0,T]$, we extend $f$ to be $0$ for $t \notin [0,T]$.
We write
\be
A A^* f(t) & = & \int_0^t \chi e^{i  (t - \tau) \Delta} \chi f( \tau) d
\tau + \int_t^T \chi e^{i  (t - \tau) \Delta} \chi f( \tau) d
\tau \\
& = :& \chi u_1(t) + \chi u_2(t),
\ee
and calculate
\ben
\label{uj-eqn}
(D_t -\Delta ) u_j = (-1)^j i \chi f.
\een
Thus boundedness of $A A^*$ will follow if we prove $u$ satisfying
\eqref{uj-eqn} satisfies
\be
\| \chi u \|_{L^2([0,T];H^{s})} \leq \| f
\|_{L^2([0,T];H^{-s})}.
\ee
Replacing $\pm i f$ with $f$ in equation \eqref{uj-eqn} and taking the
Fourier transform in time, $t \mapsto z$, results in the following
equation for $\hat u$ and $\hat f$:
\ben
\label{ft-u-eqn}
(z -\Delta ) \hat u(z, \cdot ) = \chi \hat f (z, \cdot).
\een
Since $f(t, \cdot)$ is supported only in $[0,T]$,
$\hat f(z, \cdot)$ and $\hat u(z, \cdot)$ 
are holomorphic, bounded, and satisfy \eqref{ft-u-eqn} in $\{ \Im z <0 \}$.  Let $z =
\tau - i \eta$, $\eta >0$ sufficiently small.  Since the Fourier
transform is an $L^2 H$ isometry for any Hilbert space $H$, we want to estimate 
\be
\| \chi \hat u(z, \cdot) \|_{H^{s}(X)} \leq C \|
 \hat f(z, \cdot) \|_{H^{-s}(X)}
\ee
uniformly in $z$.

For this, we observe that if we know
\[
\| \chi ( -\Delta +z)^{-1} \chi \|_{L^2 \to L^2} \leq C | z |^{-r}
\]
for some $r \geq 0$ and $\Im z = -\eta<0$ fixed, then a standard interpolation argument gives
\[
\| \chi ( -\Delta +z)^{-1} \chi \|_{L^2 \to H^2} \leq C | z |^{1-r},
\]
and hence 
\[
\| \chi ( -\Delta +z)^{-1} \chi \|_{L^2 \to H^{2r}} \leq C .
\]
Interpolating again, we get
\[
\| \chi ( -\Delta +z)^{-1} \chi \|_{H^{-r} \to H^r} \leq C .
\]

 Thus
\be
\| \chi  u \|_{L^2( [0,T] ; H^{r}(X))} & \leq &
e^{\eta T} \| e^{-\eta t} \chi u(t) \|_{L^2( [0,T] ;
  H^{r}(X))} \\
& \leq & Ce^{\eta T} \| \chi \hat{u}(\tau - i \eta) \|_{L^2(
  \reals ; H^{r}(X))} \\
& \leq & Ce^{\eta T} \| \hat f( \tau - i \eta) \|_{L^2( \reals ; H^{-r}(X))} \\
& \leq & C e^{\eta T} \|e^{- \eta t} f(t) \|_{L^2([0,T];  H^{-r}(X))} \\
& \leq & C e^{\eta T} \| f(t) \|_{L^2([0,T];  H^{-r}(X))}.
\ee
Hence 
\be
\int_0^T \|\chi u \|_{H^{r}(X)}^2 dt \leq C e^{ \eta
  T} \int_0^T \| f \|_{H^{-r}(X)}^2 dt,
\ee
or $A A^*$ is bounded.

We remark in passing that this argument works for any $r \geq 0$,
along a strip where $\Im z = - \eta <0$.  If we examine the case where
$\|\chi ( - \Delta +z )^{-1} \chi \|_{L^2 \to L^2}$ blows up as $\Im z \to
0$ (as in Case 2 of Theorem \ref{T:dichotomy}), we can use instead the trivial bound
\[
\|\chi (z - \Delta ) \chi \|_{L^2 \to L^2} \leq \frac{1}{| \Im z | } =
\frac{1}{\eta}
\]
in this case to get a zero derivative smoothing effect.  But of course
we already knew such an estimate must be true (even without spatial
cutoffs) from the $L^2(X)$ conservation law.  The point is that the
blowup of the resolvent is perfectly consistent with our physical
intuition in this problem.

\section{An Application: Spreading of Quasimodes for some Partially
  Rectangular Billiards}

In this section, we apply the microlocal estimates proved in the
previous sections to prove a spreading result for rather weak quasimodes for the
Laplacian in partially rectangular billiards.  The main result is that
if a partially rectangular billiard opens ``outward'' in at least one
wing, then any $\O( \lambda^{-\epsilon})$ quasimode must spread to
outside of any $\O(\lambda^{-\epsilon})$ neighbourhood of the
rectangular part.  This result holds for any $\epsilon>0$.

These results are similar in
spirit to results of Burq-Zworski \cite{BuZw-bb} and of Burq-Hassell-Wunsch
\cite{BHW-spread}, but the techniques of proof
are different.  
Let us be precise.

Let $\Omega \subset \reals^2$ be a planar domain with boundary in the
0-Gevrey class $\GG^0_\tau$ for $\tau < \infty$, and let $R = [-a,a]
\times [-\pi, \pi] \subset \reals^2$ be a rectangle with boundary
consisting of the two sets of parallel segments $\p R = \Gamma_1 \cup
\Gamma_2$, with $\Gamma_1 = [-a,a] \times \{ \pi \} \cup [-a,a] \times
\{ - \pi \}$.  Assume $R \subset \Omega$ and $\Gamma_1 \subset \p
\Omega$ but $\mathring{\Gamma}_2 \cap \p \Omega = \emptyset$.  We assume that for $(x, y)$ in a neighbourhood of $R$, $\p
\Omega$ is symmetric about the line $y = 0$.  Let $Y(x) = \pi + r(x)$ be a graph parametrization of the boundary curve $\p \Omega$ for
$(x,y)$ near $[-a,a] \times \{ \pi \}$.

\begin{theorem}
\label{T:billiards}
Consider the quasimode problem on $\Omega$:
\[
\begin{cases}
(-\Delta - \lambda^2) u = E(\lambda) \| u \|_{L^2}, \text{ on }
\Omega, \\
B u = 0, \text{ on } \p \Omega,
\end{cases}
\]
where $B = I$ or $B = \p_\nu$ (either Dirichlet or Neumann boundary
conditions).

Assume that $\pm r'(x)>0$ for at least one of $\pm (x \mp a) >0$ (that is, the
boundary curves ``outward'' away from the rectangular part of the
boundary for at least one side).  Fix $\epsilon>0$.  If $E(\lambda) = \O(\lambda^{-\epsilon})$ as $\lambda \to \infty$ and $\WF_{\lambda^{-1}}
u$ vanishes outside a neighbourhood of size
$\O(\lambda^{-\epsilon})$ of $R$, then $u = \O(\lambda^{-\infty})$ on
$\Omega$.

\end{theorem}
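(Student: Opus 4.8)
The plan is to run the strategy behind Theorem \ref{T:dichotomy}: separate variables near the rectangular part to reduce to a one-parameter family of one-dimensional semiclassical Schr\"odinger operators, read off the relevant critical value of the effective potential, and then feed the microlocal resolvent estimates of Section 3 into the quasimode hypothesis. Set $h = \lambda^{-1}$, so that $(-h^2\Delta - 1)u = \O(h^{2+\epsilon})\|u\|$. Using the assumed symmetry of $\p\Omega$ about $y=0$ near $R$, straighten the boundary by $(x,y)\mapsto(x,\eta)$, $\eta = \pi y/Y(x)$, which carries a fixed neighbourhood of $R$ onto a flat strip $I\times[-\pi,\pi]$ with $I\supset[-a,a]$; conjugating by the square root of the Jacobian as in Section 2, $-\Delta$ becomes a selfadjoint operator of the form $-\p_x^2 - A^{-2}(x)\p_\eta^2 + B(x,\eta)\p_x\p_\eta + (\text{l.o.t.})$ on the flat measure, with $A(x) = Y(x)/\pi$ and hence effective potential $V_0(x) := A^{-2}(x) = (\pi/Y(x))^2$. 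Since $r = Y-\pi$ is $\Ci$ and vanishes identically on $[-a,a]$, it vanishes to infinite order at $x=\pm a$; therefore $V_0 - 1$, $V_1$ and the coupling coefficient $B$ are all $\O\big((|x|-a)^\infty\big)$ near $|x|=a$, and on the $\O(\lambda^{-\epsilon})$-neighbourhood of $R$ where $u$ lives they are $\O(\lambda^{-\infty})$. Expanding $u = \sum_k u_k(x)e_k(\eta)$ in the Dirichlet (resp.\ Neumann) modes $e_k$ of $-\p_\eta^2$ on $[-\pi,\pi]$ with eigenvalue $\mu_k^2$, and writing $h_k = \mu_k^{-1}$, $z_k = \lambda^2\mu_k^{-2}$, each mode satisfies
\[
\tQ_k u_k := \big(-h_k^2\p_x^2 + V_k(x) - z_k\big)u_k = h_k^2\tE_k + \O(\lambda^{-\infty})\|u\|, \qquad \sum_k \|\tE_k\|^2 \lesssim E(\lambda)^2\|u\|^2,
\]
with $V_k = V_0 + \O(h_k^2)$ exactly of the type treated in Section 3. (Making the $\O(\lambda^{-\epsilon})$-scale cut-offs rigorous requires the two-parameter calculus of Section 2 with $\th = h^{1-\epsilon}$.)

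Next I classify the critical value. By construction $V_0\equiv 1$ on $[-a,a]$, and $V_0' = -2\pi^2 r'/Y^3$, so the outward-curving hypothesis $\pm r'>0$ on $\pm(x\mp a)>0$ makes $V_0$ strictly decreasing on at least one side of $[-a,a]$; on the other side $V_0$ is flat, decreasing, or increasing. In every case $z=1$ is a critical value of $V_0$ of cylindrical unstable type --- a cylindrical maximum (Proposition \ref{P:ml-inv-3b}) or a cylindrical monotone inflection (Proposition \ref{P:ml-inv-4b}, or the one-sided variant Proposition \ref{P:ml-inv-4a}) --- and, because $r$ vanishes to infinite order at $\pm a$, it is always infinitely degenerate, which is exactly the situation where the $\GG^0_\tau$ hypothesis on $\p\Omega$ is used. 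Fix $\eta$ with $0<\eta<\epsilon$ and let $\phi\in\s(T^*\reals)$ be a cut-off as in those propositions, supported in $\{|x|\le a+\epsilon_0,\ |\xi|\le\epsilon_0\}$ and $\equiv 1$ on a smaller such set. Then for all $v$ and all $k$ with $z_k\in[1-\epsilon_0,1+\epsilon_0]$ (equivalently $\mu_k\approx\lambda$, so $h_k\approx h$),
\[
\|\tQ_k\phi^w v\| \ge c_\eta\, h_k^{2+\eta}\,\|\phi^w v\|,
\]
with $c_\eta$ and $\epsilon_0$ independent of $k$.

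To conclude, let $\psi(x)\in\Ci_c(\reals)$ equal $1$ on the $\O(\lambda^{-\epsilon})$-neighbourhood of $[-a,a]$, so that $\|u-\psi u\| = \O(\lambda^{-\infty})\|u\|$ and $\|u\|^2 \le \sum_k\|\psi u_k\|^2 + \O(\lambda^{-\infty})\|u\|^2$. Split the sum at $z_k\in[1-\epsilon_0,1+\epsilon_0]$. For such $k$ the characteristic set $\{\xi^2 = z_k - V_0(x)\}$ over $|x|\le a+\epsilon_0$ lies in $\{|\xi|\le\epsilon_0\}$ (since $1-V_0 = \O((|x|-a)^\infty)$ there and $z_k-1$ is small), so $\psi u_k$ is, up to $\O(\lambda^{-\infty})\|u\|$, microsupported inside $\{\phi=1\}$ and $\|\psi u_k\|\le\|\phi^w u_k\| + \O(\lambda^{-\infty})\|u\|$; applying the previous estimate to $u_k$, commuting $\phi^w$ past $\tQ_k$ (the commutator is microsupported where $u$ is $\O(\lambda^{-\infty})$ by the wavefront hypothesis), and summing gives
\[
\sum_{z_k\in[1-\epsilon_0,1+\epsilon_0]}\|\psi u_k\|^2 \lesssim h^{-2(2+\eta)}\Big(\sum_k\|\tQ_k u_k\|^2 + \O(\lambda^{-\infty})\|u\|^2\Big) \lesssim h^{2(\epsilon-\eta)}\|u\|^2.
\]
For the remaining $k$ the energy $z_k$ is bounded away from $1$: if $\mu_k>\lambda$ the $k$-th equation is elliptic with a positive potential lower bound $\gtrsim\mu_k^2$, giving $\sum_{\mu_k>\lambda}\|u_k\|^2\lesssim\lambda^{-4}E(\lambda)^2\|u\|^2$; if $\mu_k<\lambda/\sqrt{1+\epsilon_0}$ the Hamilton flow of $\xi^2+V_0$ at energy $z_k$ crosses the bounded set $\{|x|\le a+\epsilon_0\}$ at speed $\gtrsim\epsilon_0^{1/2}$, so a standard escape-function / positive-commutator estimate together with $\|(1-\psi)u\| = \O(\lambda^{-\infty})\|u\|$ yields $\sum_{\mu_k<\lambda/\sqrt{1+\epsilon_0}}\|\psi u_k\|^2 \lesssim \lambda^{-2}E(\lambda)^2\|u\|^2 + \O(\lambda^{-\infty})\|u\|^2$. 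Combining the three contributions and using $E(\lambda)=\O(\lambda^{-\epsilon})$,
\[
\|u\|_{L^2}^2 \le C\big(h^{2(\epsilon-\eta)} + h^{2+2\epsilon}\big)\|u\|_{L^2}^2 + \O(\lambda^{-\infty})\|u\|_{L^2}^2,
\]
and since $\eta<\epsilon$ this forces, for $\lambda\gg1$, $\|u\|_{L^2}^2 = \O(\lambda^{-\infty})\|u\|_{L^2}^2$; that is, $u = \O(\lambda^{-\infty})$ on $\Omega$.

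The main obstacle is the reduction in the first step: carrying out the mode decomposition together with the $\O(\lambda^{-\epsilon})$-scale wavefront localization forces everything to be done inside the two-parameter calculus of Section 2 with $\th = h^{1-\epsilon}$, and one must verify that the inter-mode coupling produced by straightening the boundary, as well as the commutator errors generated by the cut-offs $\phi$ and $\psi$, are all either $\O(\lambda^{-\infty})$ --- which is exactly where the infinite-order flatness of $r$ at $\pm a$ and the wavefront hypothesis are used --- or absorbed into the lower bounds of Propositions \ref{P:ml-inv-3b}--\ref{P:ml-inv-4b}. Once this bookkeeping is in place, the classification of the critical value and the mode-by-mode estimates are a straightforward repackaging of the arguments in Section 3 and in the proof of Theorem \ref{T:dichotomy}.
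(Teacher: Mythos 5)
Your proposal is correct and follows essentially the same route as the paper's proof: straighten the boundary near $R$, use the infinite-order vanishing of $r'$ at $\pm a$ to make the coupling terms $\O(\lambda^{-\infty})$ on the shrinking neighbourhood, separate variables, treat the elliptic and non-trapping mode ranges directly, and apply the infinitely degenerate cylindrical estimates (Propositions \ref{P:ml-inv-3b}, \ref{P:ml-inv-4b}) with $\eta \ll \epsilon$ in the near-critical range. Your write-up is merely more explicit about the $\ell^2$ summation over modes, the microlocalization near the characteristic set, and the exotic ($\lambda^{-\epsilon}$-scale) cutoffs, which the paper handles more briefly.
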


See Figures \ref{fig:billiard1} and \ref{fig:billiard2} for examples where the theorem applies.

\begin{remark}

It should be clear from the proof that the 0-Gevrey assumption need only
hold in a neighbourhood of the rectangular region $R$.  It should also
be clear from the proof that the symmetry in $y$ is not necessary; it
is enough that the vertical distance is increasing on at least one
side of the rectangular part (see Figure \ref{fig:billiard3}).

It is believed that there  can be a sequence of eigenfunctions which
concentrate on the entire rectangular part.  This result does not
preclude this, as the neighbourhood in which the theorem applies
shrinks to the rectangular part as $\lambda \to \infty$.  However, it
gives a lower bound on how fast a quasimode may concentrate on the
rectangular part under the assumptions of the Theorem.  
Moreover, the proof is
meant to be extremely elementary given the estimates established in
the first part of this paper.  
It is 
possible that in some special cases, with a little more care, the assumptions can be weakened
to include any quasimode localized in a sufficiently small
neighbourhood independent of $\lambda$.

We also remark that this theorem does not apply to the famous
Bunimovich stadium, since the boundary in that case is neither 0-Gevrey
smooth, nor does it open outward on either side of the rectangular
part.  Indeed, following the first part of the proof of Theorem
\ref{T:billiards} below, the effective potential curves ``upward'' as
$y \sim - (\pi - (x\mp a )^2 )^{1/2}$, where $2\pi$ is the height of the
Bunimovich stadium and $2a$ is the width of the rectangular part.  A very sketchy heuristic is that the lowest
energy quasimode sitting in this potential well occurs when the
potential well is approximately $\lambda^{-1}$ deep.  That is, it
should be concentrated in the set where
\[
y + \pi = \pi - (\pi - (x\mp a )^2 )^{1/2} \sim \lambda^{-1},
\]
which occurs when $x \mp a \sim \lambda^{-1/2}$, or within a
$\lambda^{-1/2}$ neighbourhood of the rectangular part.  This is
precisely the type of behaviour that Theorem \ref{T:billiards} rules out.

\end{remark}

\begin{figure}
\hfill
\centerline{\input{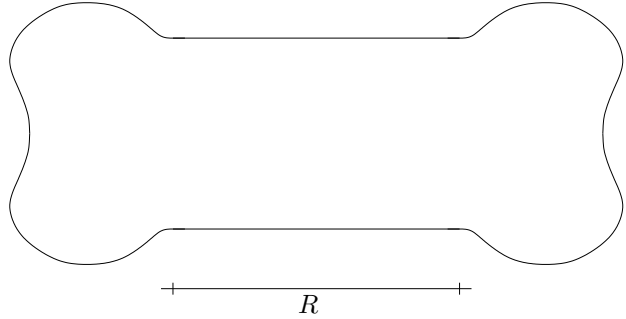}}
\caption{\label{fig:billiard1} A partially rectangular billiard
  opening ``outward'' away from the rectangular part.  No $\O(\lambda^{-\epsilon})$
  quasimode can concentrate in an $\O(\lambda^{-\epsilon})$
  neighbourhood of $R$.}
\hfill
\end{figure}

\begin{figure}
\hfill
\centerline{\input{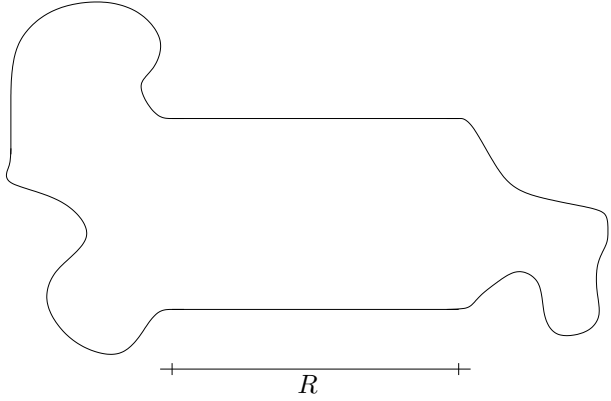}}
\caption{\label{fig:billiard2} A partially rectangular billiard
  opening ``outward'' away from the rectangular part on one side and
  ``inward'' on the other.  The same conclusion applies.}
\hfill
\end{figure}

\begin{figure}
\hfill
\centerline{\input{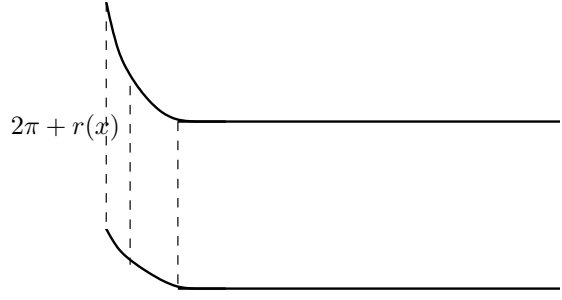}}
\caption{\label{fig:billiard3} Part of a partially rectangular
  billiard where the local symmetry in $y$ does not hold, but the
  vertical distance on one side is an increasing function away from
  the rectangular part.  The same conclusion applies in this case as
  well by trivially modifying the proof.}
\hfill
\end{figure}

\begin{proof}

The first step in the proof is to straighten the boundary near
the rectangular part and then approximately separate variables.  The
boundary $\Gamma$ near $R$ is given by $y = \pm Y(x) = \pm ( \pi +
r(x) )$ for $x \in [-a-\delta, a + \delta]$ for some $\delta>0$.
Write $P_0 = -\p_x^2 -\p_y^2$ for the flat Laplacian.  We will
``straighten the boundary'' near $R$ and compute the corresponding
change in the metric.  From this we will get a non-flat
Laplace-Beltrami operator which is almost separable.  Recall $u$
solves
\[
P_0 u = \lambda^2 u + E(\lambda) \| u \|
\]
and if $\chi(x, \lambda)$ satisfies $\chi \equiv 1$ on $\{ | x | \leq
a + \lambda^{-\epsilon} \}$, with support in, say $\{ | x | \leq a + 2
\lambda^{-\epsilon} \}$ then
\[
\chi u = u + \O(\lambda^{-\infty}) \| u \| 
\]
in any Sobolev space.  Let $\tR = \{ (x, y) \in \Omega : | x | \leq a
+ 4 \lambda^{-\epsilon} \}$ be a shrinking neighbourhood of $R$ in
$\Omega$, which is slightly larger than the set where $\chi u$ is supported.  We
change variables $(x, y) \mapsto (x', y')$  in $\tR$ in a way which straightens out the
boundary:
\[
\begin{cases}
x = x', \\
y =  y' Y(x').
\end{cases}
\]
Thus when $y = \pm Y(x) = \pm Y(x')$, $y' = \pm 1$.  We have
\begin{align*}
g & = dx^2 + dy^2 \\
& = (dx')^2 + (Y dy' + y' Y'(x') dx')^2 \\
& = (1 + A)(dx')^2 + 2B dx' dy' + Y^2 (dy')^2,
\end{align*}
where 
\[
A = (y' Y'(x'))^2,
\]
and
\[
B = y' Y' Y.
\]
In matrix notation, 
\[
g = \left( \begin{array}{cc} 1 + A & B \\ B &  Y^2 \end{array}
\right).
\]
Let us drop the cumbersome $(x', y')$ notation and write $(x,y)$
instead.  In order to compute $\Delta_g$ in these coordinates, we need
$| g|$ and $g^{-1}$.  We compute
\begin{align*}
| g | & = Y^2 (1 + A) - B^2 \\
& = Y^2 + y^2 Y^2 (Y')^2 - y^2 Y^2 (Y')^2 \\
& = Y^2.
\end{align*}
Hence
\[
g^{-1} = Y^{-2} \left( \begin{array}{cc} Y^2 & - B \\ -B &  1 + A \end{array}
\right).
\]
For our quasimode $u$ as above, we have after a tedious computation 
\begin{align*}
-\Delta_g u & = -\Big( \p_x^2 + Y^{-2} (1 + A) \p_y^2 + Y' Y^{-1} \p_x
- 2B Y^{-2} \p_x \p_y \\
& \quad 
-Y^{-1} (B/Y)_x \p_y -Y^{-1} (B/Y)_y \p_x + Y^{-1} ((1 + A)/Y)_y \p_y
\Big) u .
\end{align*}
Now let $\tchi (x, \lambda)$ be a smooth function such that $\tchi
\equiv 1$ on $\supp \chi$ with support in $\{ | x | \leq a + 4
\lambda^{-\epsilon} \}$.  Let us also assume for simplicity that we
have normalized $\| u \| = 1$.  Then, since $\Delta_g$ does not enlarge the
wavefront set, we have
\begin{align*}
-\Delta_g u & = -\tchi \Delta_g u + \O( \lambda^{-\infty} )  \\
& = - \Delta_g \tchi u - [\tchi, \Delta_g] u + \O( \lambda^{-\infty} )
 \\
& = - \Delta_g \tchi u + \O( \lambda^{-\infty} ) 
\end{align*}
by the support properties of $\tchi$, $\chi$, and the wavefront
assumption on $u$.  Hence we have
\[
-\Delta_g \tchi u = \lambda^2 \tchi u + E(\lambda) \| \tchi u \| + \O(\lambda^{-\infty}) .
\]
Observe now that the functions $A$ and $B$ are both $\O( Y'(x)) =
\O(r'(x))$, which for $(x, y) \in \tR$ is $\O( \lambda^{-\infty})$.
Hence,
\[
-\Delta_g \tchi u = P_2 \tchi u + \O( \lambda^{-\infty}) ,
\]
where $P_2 = -\p_x^2 -Y^{-2}(x) \p_y^2$.  That is,
\[
P_2 \tchi u = \lambda^2 \tchi u + E( \lambda ) \| \tchi u \| +
\O(\lambda^{-\infty} ) .
\]
Since $\tchi u$ is supported in $\tR$, which in these coordinates is
just the rectangle
\[
\tR = [-a-4\lambda^{-\epsilon}, a + 4 \lambda^{-\epsilon} ]_x \times
[-1,1]_y,
\]
we can expand in a Fourier basis (with appropriate boundary conditions):
\[
\tchi u = \sum_{k \in \ZZ} \tchi u_k(x) e_k(y).
\]
Let $\beta_k^2 \sim k^2$ be the eigenvalues in the $y$ direction, so that
$-\p_y^2 e_k = \beta_k^2 e_k$, and let $P_k = -\p_x^2 + \beta_k^2
Y^{-2}(x)$, so that
\[
P_2 \tchi u = \sum_{k \in \ZZ} e_k(y) P_k \tchi u_k.
\]
Now we rescale $h = \beta_k^{-1}$ and write
\[
P(h) = -h^2 \p_x^2 + Y^{-2}(x),
\]
so that $w = \tchi u_k$ must satisfy a semiclassical equation of the form
\[
P(h) w = z w + \tE \| w \| +  \O(\lambda^{-\infty} ) ,
\]
where $z = h^2 \lambda^2$ and $\tE = h^2 E(\lambda)$.

Now on $\tR$, the function $Y^{-2}$ satisfies $\pi^{-2} - \delta/2
\leq Y^{-2} \leq \pi^{-2}$ for some $\delta>0$ independent of $\lambda
\to \infty$.  If $z < \pi^{-2} - \delta$, say, then $P(h) - z$ is
elliptic and satisfies
\[
\| (P(h)-z)^{-1} \tchi \|_{L^2 \to L^2} \leq C.
\]
Hence for $z$ in this range, we have
\[
\| w \| =  C \tE \| w \| + \O( \lambda^{-\infty} ) ,
\]
which implies $\| w \| = \O( \lambda^{-\infty} ),$ since in this range
of $z$, we have $h^2 \leq C \lambda^{-2}$, which implies
\[
\tE = h^2 E(\lambda) = \O( \lambda^{-2} )
\]
in any case.  

Now if $z \geq \pi^{-2} + \delta$, then $\{ \xi^2 + Y^{-2}(x) = z \}$
has no critical points, so for these values of $z$, $P(h) -z$ obeys a
non-trapping estimate:
\[
\| (P(h) -z)^{-1} \tchi \|_{L^2 \to L^2} \leq C h^{-1}.
\]
Applying the same argument as in the previous case and observing that 
\[
h^{-1} \tE = h E = \O( \lambda^{-\epsilon} )
\]
yields again $w = \O( \lambda^{-\infty} )$.

For the remaining range of $z$, we must use our estimates from Propositions
\ref{P:ml-inv-3b} and \ref{P:ml-inv-4b}.  
We have assumed that $E(\lambda) = \O (
\lambda^{-\epsilon} ) = \O( h^\epsilon)$ for some $\epsilon >0$
fixed.

Since we are now in the region where 
\[
\pi^{-2} - \delta \leq z \leq \pi^{-2} + \delta,
\]
we have $h \sim \lambda^{-1}$, so $E(\lambda) = \O (
\lambda^{-\epsilon}) \sim \O( h^\epsilon)$ as $h \to 0$,
 and $\O(h^\infty)$ is equivalent to $\O( \lambda^{-\infty})$.  We
can further microlocalize and apply Proposition \ref{P:ml-inv-3b} or
\ref{P:ml-inv-4b} (as the case may be) with $\eta \ll \epsilon$ to get 
\[
\| w \| \leq C \lambda^{\eta} E(\lambda) \| w \| + \O( \lambda^{-\infty} ) ,
\]
which again implies $\| w \| = \O(\lambda^{-\infty} )$.

\end{proof}

\appendix

\section{A User's Guide to Resolvent Gluing}

\label{A:gluing}
Recently, a number of authors have used various constructions to glue
together resolvent estimates from different situations (see, for
example, \cite{Chr-disp-1,ChWu-lsm,DaVa-gluing,ChMe-lsm} and the
present work).  For example,
the best estimates near classically trapped sets are typically very
local (or microlocal) in nature, but in most reasonable situations,
the geodesic flow tends to infinity uniformly outside a small
neighbourhood of the trapping.  Hence one expects the behaviour at
spatial infinity to act somewhat independently of the behaviour near
the trapped set.  As a result, one tries to ``glue'' the microlocal
estimates near the trapping into the non-trapping estimates at
infinity.  In this appendix, we present a simple gluing technique
which works in the cases of interest in this paper; namely in one
dimensional semiclassical potential scattering.

Let $P = -h^2 \p^2 + V(x)$ be a semiclassical Schr\"odinger operator
in one spatial dimension.
We assume the potential $V(x)$ is a short range perturbation of the
inverse square potential (with one or two ``ends'').  That is, we
assume that for some $R >0$,
\[
| x | \geq R \implies | \p^k (V(x) - x^{-2} ) | \leq C_k \lll x
\rrr^{-2 -k}.
\]
Let $p = \xi^2 + V(x)$ be the symbol of $P$.  In this case, it is well
known that any critical points of $H_p$ are contained in a compact
set, and there exists a large, positive number $M$ and a symbol $\tp$
which is globally non-trapping, and $p = \tp$ for $x \geq M-1$.  Then
there are a wide selection of non-trapping estimates available for
$\tP = \Op_h (\tp)$ with a $\O(h^{-1})$ bound on the cutoff resolvent.  As we
have seen in this paper, if there are any stable critical elements of
$H_p$, then there are many nearby trajectories which do not escape to
infinity, and no gluing techniques are necessary, since we already
know the resolvent blows up rapidly.  Hence this construction will
only apply when all the critical elements of $H_p$ are at least weakly
unstable.  

Increasing $M$ later on if necessary, we first select a number of
cutoffs.  Let $\tchi \in \Ci_c( \reals)$, $0 \leq \tchi \leq 1$,
$\tchi \equiv 1$ for $| x | \leq 2M$.  Let $\tchi_2 \in \Ci_c(
\reals)$ be equal to $1$ on $\supp \tchi$ with slightly larger support.
Let $\rho_s \in \Ci(\reals)$ be
a smooth function, $\rho_s >0$, $\rho_s(x) \equiv 1$ on $\supp
\tchi_2$, $\rho(x) = \lll x \rrr^s$ for very large $|x|$.  Let $\Gamma
\in \Ci_c( \reals)$ be a cutoff equal to $1$ on $\{ | x | \leq M-1 \}$
with support in $\{ | x | \leq M \}$.  That means that the
non-trapping symbol $\tp$ equals $p$ on the support of $1-\Gamma$.
The idea is that things are well-behaved on the support of $1-\Gamma$,
and on the support of $\Gamma$, we decompose $\Gamma$ into a sum of
cutoffs where we have ``black-box'' microlocal estimates established
through other means.  The most important tool for gluing all of these
estimates together is the propagation of singularities lemma in the
next subsection.

\subsection{Propagation of Singularities}

The rough idea behind propagation of singularities is that if two
regions in phase space are connected by the $H_p$ flow, then the $L^2$
mass in one region is controlled by the $L^2$ mass in the other,
modulo a term involving $P$.  The very nature of requiring the $H_p$
flow to move from one region to another means these estimates  are inherently
non-trapping.

In order to motivate the more general statement below, let us describe
a baby version of propagation of singularities in the very special
case that the operator is $P = hD_x$.  Of course, it is well known
that if $H_p \neq 0$, then $P$ is microlocally conjugate to $hD_x$, so
this is not as ridiculous as it initially seems.  Indeed, the rough
heuristic we sketch here can be fixed up to provide an alternative
proof to Lemma \ref{wf-lemma-2} below.  The proof of this Lemma in
\cite{Chr-disp-1} proceeds by the traditional method of the original
proof of H\"ormander (see the original in \cite{Hor-sing} or the
presentation in \cite{Tay-pdo}).

Consider a function $u(x)$ of one variable.  Let $a < b$ be two points
in $\reals$.  We show that the $L^2$ mass of $u$ in a neighbourhood of
size $1$ about $a$ is controlled by the mass of $u$ near $b$ in a
neighbourhood of size $K$ modulo a term involving $P$.  Perhaps more
importantly, the constant on the term near $b$ is comparable to
$K^{-1/2}$.  That is, by enlarging the control region to size $K$, we
can make the constants in our estimates small.

For $s,t \geq 0$, we write
\[
u(a+s) - u(b+t) = \int_{a+s}^{b+t} u'(r) dr = \frac{i}{h}
\int_{a+s}^{b+t} P u dr.
\]
Rearranging and taking the absolute value squared and applying
H\"older's inequality to the integral, we get
\begin{align*}
| u(a+s)|^2 & \leq 2 | u(b+t)|^2 + 2 h^{-2} \left( \int_{a+s}^{b+t} | P u |
  dr \right)^2 \\
& \leq  2 | u(b+t)|^2 + 2 h^{-2} ((b+t) - (a+s) )  \| P u \|_{L^2(a+s, b+t)}^2 .
\end{align*}
We now integrate in $0 \leq s \leq 1$:
\[
\| u \|_{L^2(a,a+1)}^2 \leq 2 | u(b+t) |^2 + 2 h^{-2} ((b+t) - a )  \|
P u \|_{L^2(a, b+t)}^2.
\]
We follow this by integrating in $0 \leq t \leq K$ to get
\[
K \| u \|_{L^2(a,a+1)}^2 \leq 2 \| u \|_{L^2(b, b+K)}^2 + 2 K h^{-2} (b +K- a )  \|
P u \|_{L^2(a, b+K)}^2.
\]
We conclude that
\[
 \| u \|_{L^2(a,a+1)} \leq \frac{\sqrt{2}}{\sqrt{K}} \| u \|_{L^2(b,
   b+K)} + \sqrt{2} h^{-1} (b +K - a )^{1/2}  \|
P u \|_{L^2(a, b+K)}.
\]

The more general version of this idea is given in the following Lemma
from \cite{Chr-disp-1}.

\begin{lemma}
\label{wf-lemma-2}
Let $\widetilde{V}_1, \widetilde{V}_2
\Subset M$, and for $j = 1,2$ let $V_j \Subset T^*M$,
\be
V_j := \{ (x, \xi) \in T^*M : x \in \widetilde{V}_j, \,\, |p(x,\xi)
-E| \leq \alpha \}, 
\ee
for some $\alpha>0$.  
Suppose the $\widetilde{V}_j$ satisfy $\dist_g(\widetilde{V}_1, \widetilde{V}_2) =
L,$ and assume 
\begin{eqnarray}
\label{dyn-assumption-10}
\left\{ \begin{array}{l}
\exists C_1,C_2 >0 \text{ such that }
\forall \rho \text{ in a neighbourhood of } V_1, \\
\exp( t H_p)(\rho) \in  V_2 \text{ for } \\
\sqrt{E}(L + C_1) \leq  t
\leq \sqrt{E}(L + C_1 + C_2).
\end{array} \right.
\end{eqnarray}
Suppose $A\in \Psi_h^{0,0}$ is microlocally equal to $1$ in $V_2  $.
If $B \in \Psi_h^{0,0}$ and $\WF (B) \subset V_1$, then there exists a
constant $C>0$ depending only on $C_1, C_2$ such that
\begin{eqnarray*}
\left\| Bu \right\| 
& \leq & C L h^{-1} \|B \|_{\HH \to \HH}  \left\| (P(h)-z)u
\right\| + 2 (E + \alpha)^{3/4} \frac{(C_1+1)}{\sqrt{C_2}} \|B\|_{\HH
  \to \HH} \| Au \|  \\
&& \quad + \O (h) \|\widetilde{B} u\|,
\end{eqnarray*}
where 
\be
\widetilde{B} \equiv 1 \text{ on } \cup_{0 \leq t \leq \sqrt{E}(L + C_1 + C_2)} \exp( t H_p)(
\WF B).
\ee
\end{lemma}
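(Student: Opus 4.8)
The plan is to prove Lemma~\ref{wf-lemma-2} by the classical positive commutator (escape function) argument of H\"ormander, carried out in the semiclassical calculus $\Psi_h^{0,0}$ with careful tracking of every constant, since the entire content of the lemma is the explicit dependence on $L$, $C_1$, $C_2$, $E$ and $\alpha$ — in particular the gain $1/\sqrt{C_2}$ obtained by enlarging the control region. The dynamical hypothesis \eqref{dyn-assumption-10} guarantees that $H_p \neq 0$ throughout a neighbourhood of the flow segment joining $V_1$ to $V_2$ and that this segment is embedded, so the flow tube $\Theta := \bigcup_{0 \le t \le \sqrt E(L + C_1 + C_2)}\exp(tH_p)(\WF B)$ is an embedded ``thickened trajectory''. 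One may either work invariantly on $\Theta$, or first conjugate $P$ by a microlocally unitary $h$-FIO $U$ so that $U(P-E)U^{-1} = hD_{x_1}$ microlocally near $\Theta$ (Darboux plus Egorov), reducing to the model $P = hD_{x_1}$ treated in the ``baby version'' displayed above; the two routes are equivalent and I would take whichever keeps the error bookkeeping cleanest. In the resolvent application $z$ may be complex; writing $P$ self-adjoint one has the identity
\[
\Big\langle \tfrac{i}{h}[P,a^w]u,u\Big\rangle = -\tfrac{2}{h}\,\Im\langle a^w u,(P-z)u\rangle + \tfrac{2\,\Im z}{h}\langle a^w u,u\rangle,
\]
and the last term is harmless (favourable sign or absorbed) in the regime $|\Im z| \lesssim h$ of interest.

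The core is the construction of a real symbol $a \in S^0$ supported in a slight enlargement of $\Theta$ with $H_p a \le 0$, such that $-H_p a \geq c\,|b|^2$ on $\WF B$ (where $b$ is the principal symbol of $B$ and $c>0$), while the only region on which $H_p a$ is allowed to be nonnegative is the piece of $\Theta$ reached for flow-times in $[\sqrt E(L+C_1),\sqrt E(L+C_1+C_2)]$, which lies inside $V_2$ where $A \equiv 1$, and there $H_p a$ is arranged to have size $\lesssim 1/C_2$. This is the ``spread the compensating mass over a long control interval'' device, exactly as the factor $1/K$ arises in the elementary computation quoted in the excerpt: one lets $a$ transition from $0$ up to a plateau over the entrance region near $V_1$, stay constant along the long stretch of length $\sim L$, and come back down to $0$ over the compensation region near $V_2$, so that $\|a\|_{L^\infty} \lesssim 1$ and the positive part of $H_p a$ is $O(1/C_2)$. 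The geometric data — the $H_p$-flow has speed comparable to $\sqrt{E+\alpha}$ on $\{|p-E|\le \alpha\}$, the entrance region has length $\lesssim C_1+1$, and the compensation region has length $\gtrsim C_2$ — are precisely what produce the prefactor $2(E+\alpha)^{3/4}(C_1+1)/\sqrt{C_2}$. Quantizing, one writes $-\tfrac{i}{h}[P,a^w] = G + R + hE_h$ with $G \ge 0$ elliptic on $\WF B$, $R$ supported in $V_2$ of relative size $1/C_2$, and $E_h \in \Psi_h^{0,0}$ microsupported in $\Theta$; applying the sharp G\aa rding inequality to $G$ gives
\[
c\,\|Bu\|^2 \le -\Re\Big\langle \tfrac{i}{h}[P,a^w]u,u\Big\rangle + \frac{C}{C_2}\,\|Au\|^2 + \mathcal O(h)\,\|\widetilde B u\|^2,
\]
using that $\widetilde B \equiv 1$ on $\Theta$ dominates $a^w u$ and every error microsupported in $\Theta$.

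Combining this with the commutator identity and estimating $\tfrac{2}{h}|\langle a^w u,(P-z)u\rangle| \le \tfrac{C}{h}\|\widetilde B u\|\,\|(P-z)u\|$ yields a quadratic inequality in $\|Bu\|$, $\|Au\|$, $\|\widetilde B u\|$ and $\|(P-z)u\|$; an application of $2xy \le \varepsilon x^2 + \varepsilon^{-1}y^2$ on the cross terms, together with the fact that the contribution proportional to $\|\widetilde B u\|^2$ already carries a factor $h$ and may be organized into the $\mathcal O(h)\|\widetilde B u\|$ term, produces the stated estimate; the factor $L h^{-1}$ on $\|(P-z)u\|$ comes from the length $\sim L$ of the plateau region on which $a^w$ is supported. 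The main obstacle is not any single step — each is routine — but the bookkeeping: one must build $a$ with explicitly controlled $L^\infty$-size and slopes, propagate those bounds through the G\aa rding estimate and the Cauchy--Schwarz/absorption steps, and keep the dependence on $E+\alpha$, $C_1$, $C_2$ visible at every stage rather than hiding it in an unspecified constant. A secondary technical point is checking that the flow segment is embedded so that a globally defined $a$ exists; hypothesis \eqref{dyn-assumption-10} is exactly what supplies this, and in the one-dimensional potential-scattering setting relevant to the rest of the paper it holds automatically once all critical elements of $H_p$ are at least weakly unstable.
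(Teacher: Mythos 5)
Your plan is essentially the paper's own route: the paper does not reprove this lemma but cites \cite{Chr-disp-1}, where the argument is exactly the traditional H\"ormander positive-commutator (escape-function) method with the constants $L$, $C_1$, $C_2$, $E+\alpha$ tracked, and the $hD_x$ model computation in the text is the conjugation alternative you also mention. Your outline is correct; the only point you gloss over is converting the $h^{1/2}$-type symbolic and G\aa rding remainders into the stated $\O(h)\|\widetilde B u\|$ term (standard iteration/absorption), which does not change the approach.
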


\subsection{The gluing}

Now fix an energy level $z$.  
As described briefly above, we now assume that the function $\Gamma$
can be further decomposed as a sum of pseudodifferential cutoffs:
\[
\Gamma =  \sum_{j = 1}^N \Gamma_j,
\]
where for each $\Gamma_j$, $\Gamma_j = 1$ on a set where $H_p = 0$,
and we have a microlocal black
box estimate of the form
\[
\| \Gamma_j u \| \leq \frac{\alpha_j(h)}{h} \|(P-z) \Gamma_j u \|.
\]
Here, it is necessary that $\alpha_j(h) = \O(h^{-K})$ for the
technique to work (this is the same condition in, for example,
\cite{DaVa-gluing}).  However, in this paper, we have seen that for
the present applications, this 
estimate is true with $K = 1 + \epsilon$.  We will assume this is true
to save a little bit of work later (but we will point out where the
extra step would be needed).  We also require that each
$\Gamma_j \in \s^0$.  Naturally, at least one of the $\Gamma_j$s will
be supported for large frequencies $\xi$ where $P-z$ is elliptic, so
in this case the corresponding $\alpha_j(h)= \O(h)$.

We write for $s<-1/2$ and some positive numbers $c_1, c_2$:
\begin{align*}
\| \rho_{-s} (P-z) u \|^2 & \geq c_1(  \| \rho_{-s}(1-\Gamma) (P-z) u
\|^2 + \| \Gamma (P-z) u \|^2 ) \\
& \geq c_2 (  \| \rho_{-s}(1-\Gamma) (P-z) u
\|^2 + \sum_{j = 1}^N \| \Gamma_j (P-z) u
\|^2 ).
\end{align*}
There is no $\rho_{-s}$ in the terms coming from $\Gamma$ because
$\rho_{-s}$ was assumed to equal 1 on $\supp \Gamma$.  We now write
\begin{align*}
\| \rho_{-s} (1 - \Gamma) (P-z) u \|^2 & = \| \rho_{-s} (P-z) (1 -
\Gamma)  u   + [P,\Gamma] u \|^2 \\
& =  \| \rho_{-s} (P-z) (1 -
\Gamma)  u \|^2 + \|  [P,\Gamma] u \|^2 \\
& \quad + 2 \Re \lll \rho_{-s} (P-z) (1 -
\Gamma)  u   ,  [P,\Gamma]
u  \rrr \\
& \geq \| \rho_{-s} (P-z) (1 -
\Gamma)  u \|^2 + \|  [P,\Gamma] u \|^2 \\
& \quad - 2 \| \rho_{-s} (P-z) (1 -
\Gamma)  u \|  \|  [P,\Gamma]
u  \|.
\end{align*}
Expanding similarly the terms involving the $\Gamma_j$s, we get
\begin{align*}
\| \rho_{-s} (P-z) u \|^2 & \geq \| \rho_{-s} (P-z) (1 -
\Gamma)  u \|^2 + \|  [P,\Gamma] u \|^2 \\
& \quad - 2 \| \rho_{-s} (P-z) (1 -
\Gamma)  u \|  \|  [P,\Gamma]
u  \| \\
& \quad + \sum_{j = 1}^N \big(\| \rho_{-s} (P-z) 
\Gamma_j  u \|^2 + \|  [P,\Gamma_j] u \|^2 \\
& \quad \quad - 2 \| \rho_{-s} (P-z) 
\Gamma_j  u \|  \|  [P,\Gamma_j]
u  \| \big).
\end{align*}
Applying Cauchy's inequality yields for $\eta>0$:
\begin{align*}
& \| \rho_{-s} (P-z) u \|^2 \\
& \quad \geq \| \rho_{-s} (P-z) (1 -
\Gamma)  u \|^2 + \|  [P,\Gamma] u \|^2 \\
& \quad \quad - 2 \left( \eta \| \rho_{-s} (P-z) (1 -
\Gamma)  u \|^2 + 4 \eta^{-1}  \|  [P,\Gamma]
u  \|^2 \right) \\
& \quad \quad + \sum_{j = 1}^N \big( \| \rho_{-s} (P-z) 
\Gamma_j  u \|^2 + \|  [P,\Gamma_j] u \|^2 \\
& \quad \quad \quad - 2 ( \eta \| \rho_{-s} (P-z) 
\Gamma_j  u \|^2 +  4 \eta^{-1} \|  [P,\Gamma_j]
u  \|^2) \big),
\end{align*}
which, by taking $\eta>0$ sufficiently small but fixed yields (for
some positive constant $c_3>0$ and some large constant $C>0$)
\begin{align}
\| \rho_{-s} (P-z) u \|^2 & \geq c_3\| \rho_{-s} (P-z) (1 -
\Gamma)  u \|^2 -C  \|  [P,\Gamma]
u  \|^2 \notag \\
& \quad + \sum_{j = 1}^N \left( c_3\| \rho_{-s} (P-z) 
\Gamma_j  u \|^2 -C \|  [P,\Gamma_j] u \|^2 \right). \label{E:P-lower-cutoff}
\end{align}

We are now in a position to apply Lemma \ref{wf-lemma-2} to each of
the commutator terms.  Let $\Gamma_\star$ be any of the microlocal
cutoffs in \eqref{E:P-lower-cutoff}.  The commutator $[P , \Gamma_\star]$
is of order $h$ and supported in a region where every $H_p$ trajectory
flows out to $\pm \infty$ in space (in at least one direction).  We
apply Lemma \ref{wf-lemma-2} with 
\[
A = \rho_{s}(1 - \Gamma),
\]
the constant $C_2 = M$.  For $\tB$, we choose an appropriate
microlocal 
cutoff $\psi_\star$ for each $j$ (and for $\Gamma$) so that $\tB = \tchi \psi_\star$ is supported
where $H_p \neq 0$ on the flowout of the support of the symbol of
$[P, \Gamma_\star]$ and we get
\[
\| [P , \Gamma_\star] u \| \leq C_M \| (P-z) u \| +
\frac{C_0 h}{\sqrt{M}} \| \rho_{s}(1 - \Gamma) u \| + C h^2 \| \tchi
u \|,
\]
with $C_0>0$ independent of $h$ and $M$ (of course $C_M$ does depend
on $M$ but that is okay for our applications).  Plugging into
\eqref{E:P-lower-cutoff}, we get
\begin{align}
& \| \rho_{-s} (P-z) u \|^2 \notag \\
& \quad \geq c_3\| \rho_{-s} (P-z) (1 -
\Gamma)  u \|^2 \notag \\
& \quad \quad + \sum_{j = 1}^N  c_3\| \rho_{-s} (P-z) 
\Gamma_j  u \|^2 \notag \\
& \quad \quad \quad -C'  ( C_M \| (P-z) u \|^2 +
\frac{C_0 h^2}{{M}} \| \rho_{s}(1 - \Gamma) u \|^2 + C h^4 \sum_{j =
  0}^N \| \tchi \psi_j
u \|^2) . \label{E:P-lower-cutoff-2}
\end{align}
Here the constant $C'>0$ can be quite large as we are summing over all
the commutator terms, but is independent of the parameter $M$.  The sum is from $j = 0$ to $N$, where we
identify $\Gamma_0 = \Gamma$.

For the applications in this paper, the error term $ C h^4 \| \tchi
u \|^2$ is just barely too big, so we apply Lemma \ref{wf-lemma-2} one
more time to this term with the same $A$ and $\tB = \tchi_2$ to get
\[
h^2\| \tchi \psi u \| \leq C_M h\| (P-z) u \| + C_M h^2 \| \rho_{s}
(1-\Gamma) u \| + C h^3 \| \tchi_2 u \|,
\]
where now all the constants may be large.  Note if the $\alpha_j(h) =
\O(h^{-N})$ for a much larger $N$, then we could apply this argument a
finite number of times to further reduce the size of the error.
Plugging into \eqref{E:P-lower-cutoff-2} and absorbing terms which are
small into the larger ones, we get
\begin{align*}
& \| \rho_{-s} (P-z) u \|^2 \notag \\
& \quad \geq c_3\| \rho_{-s} (P-z) (1 -
\Gamma)  u \|^2 \notag \\
& \quad \quad + \sum_{j = 1}^N  c_3\| \rho_{-s} (P-z) 
\Gamma_j  u \|^2 \notag \\
& \quad \quad \quad -C'  ( C_M \| (P-z) u \|^2 +
\frac{C_0 h^2}{{M}} \| \rho_{s}(1 - \Gamma) u \|^2 + C h^6  \| \tchi_2
u \|^2) .
\end{align*}

Finally, we move the negative terms with $(P-z)$ to the left hand side and
apply all the assumed black box microlocal estimates to conclude
\begin{align*}
& \| \rho_{-s} (P-z) u \|^2 \notag \\
& \quad \geq c_3h^2\| \rho_{s} (1 -
\Gamma)  u \|^2 \notag \\
& \quad \quad + \sum_{j = 1}^N  c_3\frac{h^2}{\alpha_j^2(h)} \|  
\Gamma_j  u \|^2 \notag \\
& \quad \quad \quad -C'  ( 
\frac{C_0 h^2}{{M}} \| \rho_{s}(1 - \Gamma) u \|^2 + C h^6  \| \tchi_2
u \|^2) .
\end{align*}
By taking $M>0$ sufficiently large, the term 
\[
-C' \frac{C_0 h^2}{{M}} \| \rho_{s}(1 - \Gamma) u \|^2
\]
can be absorbed into the positive term with the same cutoffs.  After
taking the worst lower bound and summing over our partition of unity, this
gives
\begin{align*}
& \| \rho_{-s} (P-z) u \|^2 \notag \\
& \quad \geq c_4 h^2 \min \left\{ 1, \alpha_1^{-2}(h), \ldots,
  \alpha_N^{-2} (h) \right\} \| \rho_s u \|^2 - C'' h^6  \| \tchi_2
u \|^2.
\end{align*}
We finish by observing that 
\[
h^6 \ll h^2 \min \left\{ 1, \alpha_1^{-2}(h), \ldots,
  \alpha_N^{-2} (h) \right\},
\]
and $\rho_s \equiv 1$ on $\supp \tchi_2$, so this term can be absorbed
as well, leaving us with a final estimate of
\begin{align*}
& \| \rho_{-s} (P-z) u \|^2 \notag \\
& \quad \geq c_4 h^2 \min \left\{ 1, \alpha_1^{-2}(h), \ldots,
  \alpha_N^{-2} (h) \right\} \| \rho_s u \|^2.
\end{align*}

\bibliographystyle{alpha}
\bibliography{inf-deg-bib}

\end{document}